\theoremstyle{definition}
\newtheorem{theorem}{Theorem}[section]
\newtheorem*{theorem*}{Theorem}
\newtheorem{definition}[theorem]{Definition}
\newtheorem*{definition*}{Definition}
\newtheorem{proposition}[theorem]{Proposition}
\newtheorem{observation}[theorem]{Observation}
\newtheorem*{proposition*}{Proposition}
\newtheorem{lemma}[theorem]{Lemma}
\newtheorem*{lemma*}{Lemma}
\newtheorem{corollary}[theorem]{Corollary}
\newtheorem*{corollary*}{Corollary}
\newtheorem{example}[theorem]{Example}
\newtheorem*{example*}{Example}
\newtheorem{remark}[theorem]{Remark}
\newtheorem*{remark*}{Remark}
\newtheorem*{notation*}{Notation}
\numberwithin{equation}{section} 
\newcommand{\N}{\mathbb{N}}
\newcommand{\C}{\mathbb{C}}
\newcommand{\G}{\mathbb{G}}
\DeclareMathOperator{\Tr}{Tr}
\DeclareMathOperator{\tr}{tr}
\DeclareMathOperator{\diag}{diag}
\providecommand{\abs}[1]{\lvert#1\rvert}
\newcommand{\E}{\mathbb{E}}
\renewcommand{\phi}{\varphi}
\DeclareMathOperator{\NC}{NC}
\DeclareMathOperator{\M}{M}
\DeclareMathOperator{\fc}{\kappa}
\newcommand{\ncmu}{\mu}
\DeclareMathOperator{\Kr}{Kr}
\renewcommand{\epsilon}{\varepsilon} 
\newcommand{\A}{\mathcal{A}}
\newcommand{\B}{\mathcal{B}}
\newcommand{\F}{\mathcal{F}}
\newcommand{\V}{\mathcal{V}}
\renewcommand{\i}{\mathbf{i}}
\newcommand{\I}{I}
\newcommand{\tildephi}{\widetilde{\varphi}}
\newcommand{\tildekappa}{\widetilde{\kappa}}
\newcommand{\Alg}[2]{\mathop{#1 \langle#2\rangle}}
\newcommand{\nuAlg}[2]{\mathop{#1 \langle#2\rangle_0}}
\newcommand{\AAlg}[1]{\mathop{\langle#1\rangle}}
\newcommand{\orderedprod}{\mathop{\overrightarrow{\prod}}}  
\newcommand{\subproof}[1]{\vspace{2mm}\noindent\underline{#1}}
\newcommand{\cfconv}[4]{\mathop{{#1}\hspace{0.1em}{}_{#2}\hspace{-0.3em}\boxplus_{#3}\hspace{-0.1em}{#4} }}  
\newcommand{\ifconv}[4]{\mathop{{#1}\hspace{0.1em}{}_{#2}\hspace{-0.1em}{\boxplus\hspace{0.3mm}'}_{\hspace{-0.8mm}#3}\hspace{0.1em}{#4} }}  
\newcommand{\Restr}[2]{{#1}{\restriction}_{#2}} 
\newcommand\clhd[2]{\mathop{{#1} \lhd_c {#2}} }   
\newcommand{\X}{X}    
\newcommand{\Per}{G}  
\newcommand{\per}{g}   
\newcommand{\ukappa}{\underline{\kappa}}   
\title{Free probability of type B prime}
\author{Katsunori Fujie and Takahiro Hasebe}
\date{\today}
\begin{document}

\maketitle

\begin{abstract}
  Free probability of type B was invented by Biane--Goodman--Nica, and then it was generalized by Belinschi--Shlyakhtenko and F\'evrier--Nica to infinitesimal free probability. The latter found its applications to eigenvalues of perturbed random matrices in the work of Shlyakhtenko and C\'{e}bron--Dahlqvist--Gabriel.
  This paper offers a new framework, called ``free probability of type B${}^\prime$\,'', which appears in the large size limit of independent unitarily invariant random matrices with perturbations.
  Our framework is related to Boolean, free, (anti)monotone, cyclic-(anti)monotone and conditionally free independences.
  We then apply the new framework to the principal minor of unitarily invariant random matrices, which leads to the definition of a multivariate inverse Markov--Krein transform and asymptotic infinitesimal freeness of principal minors.
\end{abstract}

\tableofcontents

\section{Introduction}

\subsection{Backgrounds}

Voiculescu initiated free probability theory in the 1980s to attack the famous free group factor isomorphism problem in operator algebras.
This theory is an analogue of probability theory in which a probability space is replaced by a noncommutative probability space (\textit{ncps} for short) and independence is replaced by free independence (or freeness), see Subsection \ref{sec:free}. 
Intensive work on free probability led to numerous connections to other fields, e.g., asymptotic freeness in random matrix theory \cite{CS06,Voi91} and theory of cumulants governed by combinatorics of noncrossing partitions \cite{Spe94}. 
Note that noncrossing partitions may be called of type A because it can be somehow associated with symmetric groups, the Coxeter groups of type A \cite{Bia97}. 

On the other hand, in 1997 Reiner introduced noncrossing partitions of type B associated with hyperoctahedral groups, the Coxeter groups of type B \cite{R}.
Motivated by this work, in 2003, Biane et al.\ constructed a theory called \textit{free probability of type B} \cite{BGN}. 
This was a new framework of noncommutative probability. They introduced type B free cumulants and a notion of independence and then proved that the vanishing of mixed cumulants is equivalent to independence. Combinatorics of free cumulants of type B are governed by noncrossing partitions of type B. 
When contrasting the difference, we call the standard framework of free probability that of \textit{type A} in this paper. 
Free probability of type B has then attracted attention. Belinschi and Shlyakhtenko introduced the notion of \textit{infinitesimal ncps} and \textit{infinitesimal freeness} and studied its relation to type B free probability \cite{BS}. Its analytic aspect was intensively studied, where free subordination functions played a prominent role to describe type B free convolution. 
Then F\'evrier and Nica developed infinitesimal freeness from combinatorial aspects \cite{FN}: they defined the infinitesimal cumulants and discovered the vanishing of them is equivalent to infinitesimal freeness; 
they associated the infinitesimal ncps with a ncps of type B, in which situation infinitesimal freeness and type B freeness are equivalent. 
Note that the terms ``type B'' and ``infinitesimal'' are sometimes used for the same meaning due to the circumstances described above, so attention should be paid to the context. In the present paper, distinguishing these terms is crucial.

Infinitesimal free probability found its applications to random matrices. We briefly overview known results. Random matrices perturbed by finite rank matrices are known to show interesting behaviors: such a random matrix may have a finite number of eigenvalues (``outliers'') away from the other eigenvalues. Outliers typically appear as soon as the perturbation is bigger than a certain threshold.
This phenomenon is called the BBP phase transition, named after Baik, Ben Arous, P\'{e}ch\'{e} because of their seminal paper \cite{BBP}. Finite rank perturbation of unitarily invariant random matrices was studied by Benaych-Georges and Nadakuditi \cite{BN}, whose results were later extended to the case of sums of spiked models by Belinschi et al.\ \cite{BBCF}.
Shlyakhtenko found a connection between infinitesimal freeness and random matrices with finite rank perturbation and gave an explanation of the appearance of outliers of random matrices \cite{S}. 
C\'ebron, Dahlqvist and Gabriel \cite{CDG} proved a surprising relation between outliers of random matrices and conditional freeness of Bo\.zejko, Leinert and Speicher \cite{BLS,BS1} by using vector states. 
In 2018, Collins, Hasebe, and Sakuma gave an abstract framework to Shlyakhtenko's aforementioned work and named the calculation rule of moments {\it cyclic-monotone independence} because of its resemblance with monotone independence \cite{CHS}. Not only being similar, cyclic-monotone independence is directly connected to monotone independence: Arizmendi, Hasebe and Lehner proved that the canonical operator model for monotone independence on the tensor product Hilbert space satisfies cyclic-monotone independence with respect to the vacuum state and the trace \cite{AHL}; C\'ebron, Dahlqvist, and Gabriel showed how monotone independence appears from cyclic-monotone independence in an abstract setting \cite{CDG};
Collins, Leid and Sakuma constructed matrix models for monotone independence and cyclic-monotone independence \cite{CLS}. 

Although work on outliers of perturbed models is mainly devoted to sums and multiplications, some results on polynomials on unitarily invariant random matrices with perturbation are obtained by Belinschi, Bercovici and Capitaine \cite{BBC}. 
For models having only discrete spectra, Arizmendi and Celestino gave an algorithm for computing eigenvalues of polynomials on asymptotically cyclic-monotone independent random matrices \cite{AC}, and Collins et al.\ gave an algorithm for computing fluctuations of eigenvalues of polynomials on \textit{trivially independent}\footnote{See Definition \ref{def:trivial} of the present paper.} random matrices \cite{CFHLS}.

The purpose of the present paper is to offer a better understanding of the developments mentioned above.

\subsection{Overview of main results and structure of the paper}

In this paper, we provide a comprehensive framework ``\textit{type B\,${}^\prime$ ncps}'' to describe random matrices with perturbation. This framework arises from the random matrix models 
\begin{align}
  \{U_i A_i U_i^* + F_i\}_{i=1,2} \quad \text{and} \label{eq:model}\\
  \{U_i A_i U_i^* + V_i F_i V_i^*\}_{i=1,2}\label{eq:model2}, 
\end{align}
where $A_i= A_i^{(N)}$ and $F_i=F_i^{(N)}$ are deterministic matrices in $\M_N(\C)$, $F_i$ are of uniformly bounded ranks and $\{U_1,U_2,V_1,V_2\}$ are independent Haar unitary matrices.\footnote{The setting here is simplified so that a core idea can be easily grasped; results in later sections will be proved in a more general setting, e.g., the index is more general and the ranks of $F_i$ are not necessarily uniformly bounded.} We assume that, in the large $N$ limit, the ``main parts'' $U_i A_i U_i^*$ converge in distribution (with respect to the normalized trace) to elements $a_i$ in a unital algebra $\A$, and the ``perturbation parts'' $F_i$ converge in distribution (with respect to the nonnormalized trace) to elements $f_i$ in an $\A$-algebra $\F$.
The sums $U_i A_i U_i^* + F_i$ then converge (in a suitable sense) to $a_i+f_i \in \A \oplus \F$, which we also denote by $(a_i,f_i)$ to distinguish the main part and perturbation. The relationship between $a_i$ and $f_i$ is described in a way quite similar to type B free probability (in the sense of Biane, Goodman and Nica \cite{BGN}) but slightly different. 
More precisely, in both frameworks of type B and of type B${}^\prime$, a random variable consists of its main part and perturbation part, denoted as a pair $(a,f) \in \A \oplus \F$ as above. The difference appears in the multiplication rule. In the type B setting, the perturbation part $f$ is considered to be ``infinitesimal'', so that multiplication is defined by
\begin{equation*}
  (a_1, f_1) \cdot (a_2, f_2) := (a_1a_2, a_1 f_2 + f_1 a_2).
\end{equation*}
By contrast, in our new framework of type B${}^\prime$, the second component $f$ represents a finite rank matrix, so that the multiplication rule is 
\begin{equation*}
  (a_1, f_1) (a_2, f_2) := (a_1a_2, a_1 f_2 + f_1 a_2+f_1f_2), 
\end{equation*}
i.e., the product of perturbations is not neglected. This is natural because the product $F_1F_2$ of finite rank matrices $F_1,F_2$ might still be comparable with $A_1F_2$ and $F_1A_2$ in the large $N$ limit, e.g., in case $F_1=F_2$ is a rank one projection and $A_1$ and $A_2$ are the identity matrix. 
See Subsections \ref{sec:ncps} and \ref{sec:typeB_NCPS} for further details.

The asymptotic behavior of the models \eqref{eq:model} and \eqref{eq:model2} naturally yields two respective notions of independence ``weak B${}^\prime$-freeness'' and ``B${}^\prime$-freeness'' for abstract random variables $(a_1,f_1)$ and $(a_2,f_2)$. The former notion stands for the situation where $a_1$ and $a_2$ are free and $(\langle a_1,a_2\rangle, \langle f_1,f_2\rangle)$ is cyclic-antimonotone.\footnote{We mostly use the term ``cyclic-antimonotone'' instead of ``cyclic-monotone'', see the texts following Definition \ref{def:cm} for the reason.}
The latter notion, B${}^\prime$-freeness, additionally requires that the product $f_1f_2$ behaves like the zero element (in which case we call $f_1$ and $f_2$ to be trivially independent), but does not require $f_i^n$ ($n \ge 2$) to behave like zero. The asymptotic trivial independence of $\{V_i F_i V_i^*\}_{i=1,2}$ was already pointed out in \cite[Proposition 4.11]{CHS}. See Subsection \ref{sec:B'free} for detailed definitions of the two notions of independence and see Subsection \ref{sec:RM} for asymptotic independence of our random matrix models \eqref{eq:model} and \eqref{eq:model2}.

The framework of type B${}^\prime$ is intimately connected to infinitesimal freeness and infinitesimally free cumulants. 
In Section \ref{sec:BFIF} we prove that B${}^\prime$-freeness and its weak version can be characterized in terms of infinitesimal freeness (Theorem \ref{thm:B'free}). 

Weak B${}^\prime$-freeness turns out to be connected to conditional freeness of Bo\.zejko, Leinert and Speicher \cite{BLS,BS1}. More precisely, in Section \ref{sec:cfree} we prove that if $(a_1,f_1)$ and $(a_2,f_2)$ are weakly B${}^\prime$-free, then the conditional freeness of $(a_1,f_1)$ and $(a_2,f_2)$ with respect to suitable linear functionals is equivalent to the Boolean independence (instead of trivial independence) of $f_1$ and $f_2$ (Theorem \ref{thm:c-free}). Note that several different connections of infinitesimal freeness and conditional freeness are pointed out in the literature \cite{BS,CDG,FMNS}.

As an application of type B${}^\prime$ free probability, in Section \ref{sec:random_matrix} we study the principal submatrix of unitarily invariant random matrices.
F\'evrier and Nica's infinitesimal analysis of free compressions fits very well in this model. The principal submatrix of $A \in \M_N(\C)$ can be written as $P A P$, where $P=\diag(1,1,\dots,1,0)$. The projection $P$ is almost the identity matrix in the large $N$ limit, but in the infinitesimal framework we can distinguish $P$ from the identity matrix in the large $N$ limit.
It is quite interesting that the inverse Markov--Krein transform appears in the calculations of infinitesimal distributions.\footnote{The Markov--Krein transform also appears in other contexts of random matrices, see \cite{AGVP,MP}.
For more applications of Markov--Krein transform, the reader is referred to \cite{FH,K} and the references therein.} This was already observed in \cite{FH} for the single variate case. In the present paper, we propose a notion of a ``multivariate inverse Markov--Krein transform.''
We also prove an asymptotic infinitesimal freeness for the principal minors, which shows our type B${}^\prime$ setting is appropriate for understanding principal minors. 

In the rest of this section, we introduce preliminary materials required in this paper.

\subsection{Freeness}\label{sec:free}
Throughout this paper, we assume that $(\A, \varphi)$ is a \textit{noncommutative probability space} (abbreviated to \textit{ncps} as already mentioned): $\A$ is an algebra over $\C$ with unit $1_\A$ and $\varphi$ is a unital linear functional on $\A$.
It is common to assume additional structures on $(\A,\varphi)$, e.g.,\ $\A$ is a $\ast$-algebra or $C^*$-algebra, $\varphi$ is a state or $\varphi$ is tracial; however, a large part of this paper does not require these additional structures. 

Before going into discussions, we introduce technical notations which are useful for simplifying various descriptions.
For an index set $I$ and natural numbers $n \in \N$, we define the sets of \textit{alternating sequences}
\begin{align*}
&I^{(n)} := \{ (i_1, i_2,\dots, i_n) \in I^n \mid i_1\neq i_2 \ne \cdots \neq i_n\}~(n\ge2), \quad I^{(1)}:=I 
\quad \text{and} \quad \\
&I^{(\infty)} := \bigcup_{n\in\N} I^{(n)}, 
\end{align*} 
where $i_1\ne i_2 \ne \cdots \ne i_n$ denotes the situation that the neighboring indices are distinct, i.e.,\ $i_k \ne i_{k+1}$ for all $1 \le k \le n-1$. 
Let $(\A_i)_{i \in I}$ be a family of subalgebras of $\A$ containing $1_\A$.
Every subalgebra $\A_i$ has the canonical decomposition $\A_i = \mathring{\A}_i \oplus \C 1_\A$ (as vector spaces) given by $a = \mathring{a} + \phi(a)1_\A$ for $a \in \A_i$, where $\mathring{\A}_i := \{a \in \A_i \mid \phi(a)=0 \}$ denotes the set of \textit{centered elements}. 
We introduce the abbreviations 
\[
\A_{\i} := \A_{i_1} \times \A_{i_2} \times \cdots \times \A_{i_n} \qquad \text{and} \qquad \mathring{\A}_{\i} := \mathring{\A}_{i_1} \times \mathring{A}_{i_2} \times \cdots \times \mathring{\A}_{i_n}
\]
for $\i = (i_1, i_2,\dots, i_n) \in I^{(\infty)}$.

\begin{definition}[{\cite{V85}}] \label{def:freeness} A family of subalgebras $(\A_i)_{i \in I}$ of $\A$ containing $1_\A$ are said to be \textit{freely independent} (or \textit{free}) in $(\A, \varphi)$ if $\varphi(a_1a_2 \cdots a_n)=0$ holds for
  any $\i = (i_1,i_2, \dots, i_n) \in \I^{(\infty)}$ 
  and any $(a_1, a_2,\dots, a_n)$ 
  $ \in \mathring{\A}_\i$. 
 \end{definition} 

\subsection{Boolean, monotone, antimonotone and c-free independences}

\begin{definition}[{\cite{SW97}}]
 Subalgebras $(\A_i)_{i \in I}$ of $\A$ (not necessarily containing $1_\A$) are called \textit{Boolean independent} if  
 \begin{enumerate}[label=\rm(B),leftmargin=1cm]
     \item  $\phi(a_1a_2 \cdots a_n)= \phi(a_1) \phi(a_2)\cdots \phi(a_n)$ whenever $\i=(i_1,i_2,\dots, i_n)\in \I^{(\infty)}$ and $(a_1,a_2,\dots,a_n)\in \A_{\i}$. 
 \end{enumerate}
\end{definition}

It immediately follows from the definition that Boolean independence demands that $\phi$ is nontracial and each subalgebra $\A_i$ does not include $1_\A$ unless in trivial situations.

\begin{definition}[{\cite{Mur00}}]\label{defi:monotone}
A pair of subalgebras $(\A_1, \A_2)$ of $\A$ (not necessarily containing $1_\A$) is called \textit{monotonically independent} if 
\begin{enumerate}[label=\rm(M),leftmargin=1cm]
    \item  $\phi(y_0 x_1 y_1 x_2 y_2  \cdots  x_n y_n)= \phi(x_1 x_2 \cdots x_n) \phi(y_0) \phi(y_1)\cdots \phi(y_n)$ for all $x_1,x_2,\dots, x_n \in \A_1$ and $  y_0, y_1,\dots,y_n  \in \langle \A_2, 1_\A\rangle. $  
\end{enumerate}
In this situation we also say that the  reversed pair $(\A_2, \A_1)$ is  \textit{antimonotonically independent}. 
\end{definition}

Consider another unital linear functional $\psi$ on $\A$.
A generalization of freeness and Boolean independence was introduced in \cite{BS1}  referring to $(\A, \phi, \psi)$.

\begin{definition}[{\cite{BLS,BS1}}]
 Subalgebras $(\A_i)_{i \in I}$ of $\A$ containing $1_\A$ are called \textit{conditionally free} (often abbreviated to \textit{c-free}) if they are free with respect to $\phi$ and 
 \begin{enumerate}[label=\rm(CF),leftmargin=1.2cm]
     \item  $\psi(a_1a_2 \cdots a_n)= \psi(a_1) \psi(a_2)\cdots \psi(a_n)$ whenever $n \in \N$, $(i_1,i_2,\dots, i_n)\in \I^{(n)}$ and $a_j \in \A_{i_j}$ such that $\varphi(a_j)=0$ ($1 \le j \le n$). 
 \end{enumerate}

\end{definition}

Note that if $(\A_i)_{i \in I}$ are c-free then $\Restr{\psi}{\langle \A_i : i\in I \rangle}$ is uniquely determined by $\{\Restr{\phi}{\A_i}\mid i\in I\}$ and $\{\Restr{\psi}{\A_i}\mid i\in I\}$.

\subsection{Infinitesimal freeness} 
\label{sec:infinitesimal}

\begin{definition}[\cite{FN}]
  Suppose that $\B$ is a unital algebra over $\C$ and $\phi, \phi' \colon \B \to \C$ are linear maps with $\phi(1_\B) = 1$, $\phi'(1_\B)=0$.
  We call $(\B, \phi, \phi')$ an \textit{infinitesimal ncps}. For each subalgebra $\B_1$ of $\B$ we denote by $\mathring\B_1$ the subspace $\{b \in \B_1: \varphi(b)=0\}$. 
\end{definition}

\begin{definition}[{\cite[Definition 1.1]{FN}}]\label{def:infinitesimal_free}
  Let $(\B, \phi, \phi')$ be an infinitesimal ncps and let $(\B_i)_{i\in I}$ be subalgebras of $\B$ containing the unit of $\B$.  We will say that $(\B_i)_{i\in I}$ are \textit{infinitesimally free} with respect to $(\phi, \phi')$ if the following condition is satisfied:
 \begin{enumerate}[label=\rm(IF),leftmargin=1cm] 
  \item\label{item:infinitesimal_free} 
    For every $\i =(i_1,i_2,\dots, i_n) \in \I^{(\infty)}$ and $(b_1,b_2,\dots, b_n) \in \mathring\B_\i$ we have
 $\phi(b_1b_2 \cdots b_n)=0$ and
    \begin{equation} \label{eq:infinitesimal_free}
      \phi'(b_1b_2 \cdots b_n) = 
      \begin{cases}
        \phi(b_1 b_n) \phi(b_2 b_{n-1}) \cdots \phi(b_{(n-1)/2}b_{(n+3)/2}) \phi'(b_{(n+1)/2}) & \\
         \qquad\quad \text{if $n$ is odd and $i_1 = i_n$, $i_2 = i_{n-1}$, \dots, $i_{(n-1)/2} = i_{(n+3)/2}$},   \\
        0, \qquad \text{otherwise}.
      \end{cases}
    \end{equation}
  \end{enumerate}
\end{definition}

Biane, Goodman, Nica's ncps of type B and the present paper's new framework ncps of type B$'$ will provide examples of infinitesimal ncps, see Subsection \ref{sec:typeB_NCPS}. 
Moreover, the freeness of type B is equivalent to infinitesimal freeness \cite[Formula (2.2)]{FN}, and also the freeness of type B$'$ is closely related to infinitesimal freeness, see Theorem \ref{thm:B'free}.

\subsection{Free cumulants and infinitesimal free cumulants}
\label{subsec:FreeCumulants}

Let $\NC(n)$ denote the set of noncrossing partitions of $[n] := \{ 1, 2,\dots, n \}$ and $\ncmu_n$ the M\"{o}bius function on the poset $\NC(n)$.   The maximum and minimum elements are denoted by  $1_n=\{[n]\}$ and $0_n=\{\{k\}\mid k \in [n]\}$, respectively. More generally, we sometimes consider the set of noncrossing partitions $\NC(L)$ on a finite linearly ordered set $L$, which is defined in the obvious way. 

Let $(\B,\phi,\phi')$ be an infinitesimal ncps. 
Let $\G$ be the algebra $\C[\epsilon]/(\epsilon^2)$ where $\epsilon$ is an indeterminate and let $\tildephi\colon \B \to \G$ be defined as follows:
\begin{equation*}
  \tildephi(b) = \varphi(b) + \epsilon\phi'(b), \qquad b \in \B.
\end{equation*}
The standard theory of free cumulants (see e.g.\ \cite{NS}) works for $\G$-valued ncps $(\B,\tildephi)$ with the same arguments. Then one can define infinitesimal free cumulants by looking at the coefficient of $\epsilon^1$ of the $\G$-valued free cumulants. For  the reader's convenience, an overview is provided below.

\subsubsection*{Definition of cumulants}

We define the multilinear functionals $\tildephi_n \colon \B^n \to \G$ by 
\[
\tildephi_n[b_1,b_2, \dots, b_n] := \tildephi(b_1 b_2\cdots b_n)
\]
for $n \in \N$, $b_i \in \B$ ($1 \le i \le n$). 
Then the family $(\tildephi_n)_{n \in \N}$ is multiplicatively extended to $(\tildephi_\pi\colon \B^n \to \G)_{n \in \N,\; \pi \in \NC(n)}$ as follows:
\begin{equation} \label{eq:multiplicative}
  \tildephi_\pi := \prod_{V \in \pi} \Restr{\tildephi}{V},
\qquad \text{where} \qquad  
  \Restr{\tildephi}{V} [b_1,b_2, \dots, b_n] := \tildephi_k [b_{i_1},b_{i_2}, \dots, b_{i_k}]
\end{equation}
for a nonempty subset $V = \{ i_1, i_2, \dots, i_k\} \subseteq [n]$ with $i_1<i_2< \cdots < i_k$.
The coefficients $\epsilon^0$ and $\epsilon^1$ of $\widetilde \phi$ are denoted by $(\varphi_\pi\colon \B^n \to \C)$ and $(\varphi'_\pi\colon \B^n \to \C)$, respectively: 
\begin{equation} \label{eq:tildephi1}
  \tildephi_\pi = \varphi_\pi + \epsilon \varphi'_\pi, \qquad \pi \in \NC(n). 
\end{equation}
This definition leads to the multiplicativity 
  \begin{equation*}
    \varphi_\pi = \prod_{V \in \pi} \Restr{\varphi}{V},\qquad \pi \in \NC(n) 
  \end{equation*} 
 that appears in type A free probability.
 By contrast, the formula for $\varphi'_\pi$ is very different and is given by 
  \begin{equation}
    \varphi'_\pi = \sum_{V \in \pi} \Restr{\varphi'}{V} \prod_{W \in \pi\setminus\{V\}} \Restr{\varphi}{W}. \label{eq:varphi'}
  \end{equation}
The prime operation thus behaves like differentiation. 
Since $\pi \setminus \{V\}$ can be regarded as a noncrossing partition on the totally ordered set $[n] \setminus V$,  formula \eqref{eq:varphi'} can also be written in the concise form 
\[
\varphi'_\pi = \sum_{V \in \pi} \Restr{\varphi'}{V}  \varphi_{\pi \setminus \{V\}}. 
\]

Let $\ncmu_n$ be the M\"{o}bius function on $\NC(n)$. The \textit{$\G$-valued free cumulants} $(\tildekappa_\pi\colon \B^n\to\G)_{\pi \in \NC(n), n \in \N}$ are defined by 
\begin{equation} \label{cumulant-moment_tilde}
  \tildekappa_\pi := \sum_{\substack{\sigma \in \NC(n) \\ \sigma \le \pi}} \tildephi_\sigma \ncmu_n(\sigma, \pi), \qquad \pi \in \NC(n). 
\end{equation}
The multilinear functionals $(\kappa_\pi\colon \B^n \to \C)_{\pi \in \NC(n), n \in \N}$ and $(\kappa'_\pi\colon \B^n \to \C)_{\pi \in \NC(n), n \in \N}$, defined by 
\begin{equation} \label{eq:tildekappa1}
  \tildekappa_\pi = \kappa_\pi + \epsilon \kappa'_\pi,  
\end{equation}
are respectively called the \textit{free cumulants} and \textit{infinitesimal free cumulants}.

\subsubsection*{Moment-cumulant formulas}
Imitating the proofs of the $\C$-valued case, we obtain the formulas 
\begin{align}
  \tildephi_\pi   & = \sum_{\sigma \le \pi} \tildekappa_\sigma \quad \text{and}\label{moment-cumulant_tilde} \\
  \tildekappa_\pi & = \prod_{V \in \pi} \Restr{\tildekappa}{V} \label{eq:tildekappa2}
\end{align}
 for all $\pi \in \NC(n)$.
Equations \eqref{cumulant-moment_tilde} and \eqref{moment-cumulant_tilde} are called the ($\G$-valued) \textit{moment-cumulant formulas}.
Comparing the coefficients of $\epsilon^0$ and $\epsilon^1$ in formulas \eqref{cumulant-moment_tilde}, \eqref{moment-cumulant_tilde} and \eqref{eq:tildekappa2} yields the formulas
\begin{align}
 &\kappa_\pi   = \sum_{\sigma \le \pi} \varphi_\sigma \ncmu_n(\sigma, \pi) \qquad \text{and} \qquad 
  \varphi_\pi  = \sum_{\sigma \le \pi} \kappa_\sigma,                             \label{eq:moment-_cumulant_A} \\
  &\kappa'_\pi   = \sum_{\sigma \le \pi} \varphi'_\sigma \ncmu_n(\sigma, \pi) \qquad \text{and} \qquad   
  \varphi'_\pi  = \sum_{\sigma \le \pi} \kappa'_\sigma,     \notag                        \\
  &\kappa_\pi  = \prod_{V \in \pi} \Restr{\kappa}{V} \qquad \text{and} \qquad \kappa'_\pi   = \sum_{V \in \pi} \Restr{\kappa'}{V} \prod_{W \in \pi, W \neq V} \Restr{\kappa}{W}. \label{eq:multiplicativity_kappa}
\end{align}

\subsection{Distributions and convolutions}
Let $(\B,\phi,\phi')$ be an infinitesimal ncps. 

A linear functional $\mu \colon \C[x]\to \C$ is called a \textit{distribution}. 
If $\mu,\nu \colon \C[x]\to \C$ are linear functionals with $\mu(1)=1$ and $\nu(1)=0$ then the pair $(\mu,\nu)$ is called an \textit{infinitesimal distribution}.

For $b \in \B$ the linear functional $\mu_b \colon \C[x]\to \C$ defined by $\mu_b(x^n):=\phi(b^n), n\in \N\cup\{0\}$ is called \textit{the distribution of $b$ with respect to $\phi$.} Moreover, together with the linear functional $\mu_b'\colon \C[x]\to \C$ defined by $\mu_b'(x^n)= \phi'(x^n), x\in \N\cup\{0\}$, the pair $(\mu_b,\mu_b')$ is called the \textit{infinitesimal distribution of $b$ with respect to $(\phi, \phi')$}. 

Suppose that $b_1,b_2\in \B$ have distributions $\mu_1,\mu_2$ with respect to $\phi$, respectively. 
If $b_1, b_2\in\B$ are free, monotone, antimonotone and Boolean independent with respect to $\phi$, then the distribution of $b_1 +b_2$ is called the \textit{free, monotone, antimonotone and Boolean convolution} of $\mu_1, \mu_2$ and are denoted by $\mu_1 \boxplus \mu_2$, $\mu_1 \rhd \mu_2$, $\mu_1 \lhd \mu_2$ and $\mu_1 \uplus \mu_2$, respectively. Note that $\mu_1 \rhd \mu_2 = \mu_2 \lhd \mu_1$ by definition. 

If $b_1,b_2\in \B$ are infinitesimally free with respect to $(\phi, \phi')$ having infinitesimal distributions $(\mu_1,\mu'_1), (\mu_2,\mu'_2)$, respectively, then the distribution of $b_1+b_2$ with respect to $\phi'$ is called the \textit{infinitesimal free convolution} and is denoted by $\ifconv{\mu'_1}{\mu_1}{\mu_2}{\mu'_2}$. We also call the binary operation $(\mu_1, \mu'_1) \boxplus (\mu_2,\mu'_2):=(\mu_1\boxplus \mu_2, \ifconv{\mu'_1}{\mu_1}{\mu_2}{\mu'_2})$ the infinitesimal free convolution. 

Likewise, let $\psi$ be a unital linear functional on $\B$ and $\nu_1, \nu_2$ be the distributions of $b_1,b_2$ with respect to $\psi$, respectively. If $b_1,b_2\in \B$ are c-free with respect to $(\phi, \psi)$ then the distribution of $b_1+b_2$ with respect to $\psi$ is called the \textit{c-free convolution} and is denoted by $\cfconv{\nu_1}{\mu_1}{\mu_2}{\nu_2}$. In the above two cases the distribution of $b_1+b_2$ with respect to $\phi$ is the free convolution $\mu_1 \boxplus \mu_2$.

In this paper we also work with another convolution called cyclic-antimonotone convolution. It will be defined later in Corollary \ref{cor:cm} since we need some more preparation.

For computing convolutions, useful machineries are the Cauchy transform $G_\mu$ and its reciprocal $F_{\mu}$ of a distribution $\mu$: 
\[
G_\mu(z) = \sum_{n=0}^\infty \frac{\mu(x^n)}{z^{n+1}} \qquad \text{and} \qquad F_\mu(z) = \frac{1}{G_\mu(z)}, 
\]
both defined as formal Laurent series. The formal compositional inverse of $F_\mu(z)$ is denoted by $F_\mu^{-1}(z)$. 
With the above notation, free, Boolean and monotone convolutions are characterized by 
\begin{align}
F_{\mu_1 \boxplus \mu_2}^{-1}(z) &= F_{\mu_1}^{-1}(z)+F_{\mu_2}^{-1}(z)-z, & \cite[\rm Corollary~5.8]{BV93} \notag \\
F_{\mu_1 \uplus \mu_2}(z) &= F_{\mu_1}(z)+F_{\mu_2}(z)-z, & \cite{SW97} \label{eq:Boolean_convolution} \\
F_{\mu_1 \rhd \mu_2}(z) &= F_{\mu_1}\circ F_{\mu_2}(z). & \cite[\rm Theorem~3.1]{Mur00} \label{eq:monotone_convolution}
\end{align}
According to \cite[Proposition 5.2]{Has} or \cite[Proposition 3]{Bel} the following formula holds: 
\begin{equation} \label{eq:c-free_convolution}
F_{\cfconv{\nu_1\!}{\mu_1\,\,}{\mu_2}{\nu_2}} = F_{\nu_1}\circ \omega_1 + F_{\nu_2}\circ \omega_2  - F_{\mu_1 \boxplus \mu_2},  
\end{equation}
where $\omega_i:= F_{\mu_i}^{-1} \circ F_{\mu_1 \boxplus \mu_2}$. 
Finally, infinitesimal free convolution is characterized by 
\begin{equation}\label{eq:infinitesimal_convolution}
  G_{\ifconv{\nu_1}{\mu_1}{\mu_2}{\nu_2}}(z) = G_{\nu_1}(\omega_1(z)) \omega_1'(z) +  G_{\nu_2}(\omega_2(z)) \omega_2'(z).  \qquad \cite[\rm Proposition~20]{BS} 
\end{equation}


\section{Framework and random matrix models}\label{sec:setup}

\subsection{Ncps of type B${}^\prime$, trivial independence and cyclic-antimonotone independence} \label{sec:ncps}

\begin{definition} \begin{enumerate}[label=\rm(\roman*),leftmargin=1cm]
\item   Let $\F$ be a $\C$-algebra and $\Phi\colon \F \to \C$ be a linear functional.
  The pair $(\F, \Phi)$ is called a \textit{noncommutative measure space.}
 \item Let $(\F, \Phi)$ be a noncommutative measure space such that $\F$ is an $\A$-algebra, i.e., $\F$ is an algebra having an $\A$-bimodule structure consistent with $\F$'s own multiplication. Then the tuple $(\A,\phi,\F,\Phi)$ is called a \textit{noncommutative probability space of type B prime} (abbreviated to \textit{ncps of type B\,${}^\prime$}). 
 \end{enumerate}
\end{definition}
As in the case of ordinary ncps, we may put more assumptions on $\F$ and $\Phi$, e.g., $\Phi$ is positive if  $\F$ is a $\ast$-algebra, or $\Phi$ is tracial. In this paper, however, we do not need such assumptions.   

A typical situation is that 
$\A$ is the $\ast$-algebra $\M_N(\C)$ of matrices of size $N$, $\F$ is the $\ast$-algebra of matrices $\M_N(\C)$ of uniformly bounded rank, $\phi$ is the normalized trace and $\Phi$ is the nonnormalized trace on $\M_N(\C)$. In this case, of course $\F$ is naturally a subalgebra (even an ideal) of $\A$, but in the present paper it is very important to treat $\A$ and $\F$ as separate algebras. By doing so we can distinguish main part and perturbation part and then we can define appropriate two linear functionals, see Definition \ref{def:typeB_NCPS}.  

We define two notions of independence.

\begin{definition}\label{def:trivial} Let $(\F,\Phi)$ be a noncommutative measure space. 
  Subalgebras $(\F_i)_{i \in I}$  of $\F$ 
 are said to be \textit{trivially independent} with respect to $\Phi$ if $\Phi(f_1f_2 \cdots f_n)=0$ holds for every $n\ge2, \i  \in \I^{(n)}$ and every $(f_1, f_2,\dots, f_n) \in \F_\i$.
\end{definition}

\begin{remark}
The notion of trivial independence already appeared in the work of Ben Ghorbal and Sch\"urmann \cite{BGS} under the name of ``degenerate product'', which  is an obvious example of associative universal product. 
\end{remark}

\begin{definition}\label{def:cm} 
Let $(\A,\phi,\F,\Phi)$ be a  ncps of type B${}^\prime$, 
 $\A_1$ a subalgebra of $\A$ containing $1_\A$ and $\F_1$ a subalgebra of $\F$.
  The pair $(\A_1, \F_1)$ is said to be \textit{cyclic-antimonotone independent} if
  \begin{equation} \label{eq:cyclic}
    \Phi(a_0 f_1 a_1 f_2 \cdots a_{n-1} f_n a_n) = \varphi(a_0 a_n) \left[\prod_{1 \le i \le n-1} \phi(a_i)\right]   \Phi(f_1f_2 \cdots f_n)
  \end{equation}
  for $n \in \N$, $a_0, \dots, a_n \in \A_1$, $f_1, \dots, f_n \in \F_1$. 

\end{definition}

The concept of cyclic-monotone independence was formulated by Collins, Hasebe and Sakuma \cite{CHS} abstracting calculations by Shlyakhtenko \cite{S}. We have chosen the term ``cyclic-antimonotone'' because we prefer to write $(\A_1, \F_1)$ instead of $(\F_1, \A_1)$, the latter of which would deserve to be called cyclic-monotone independence, cf.~Definition \ref{defi:monotone}. Other models of cyclic-(anti)monotone independence are provided by Collins, Leid and Sakuma \cite{CLS} and Arizmendi, Hasebe and Lehner \cite{AHL}. Notably, the definition of cyclic-monotone independence in \cite{AHL} was given for any number of subalgebras, not limited to two subalgebras. However, in the present paper we only discuss cyclic-antimonotone independence for a pair of subalgebras.

\begin{remark}\label{rem:cm}
  Some technical comments on cyclic-antimonotone independence are worth noting here.
  \begin{enumerate}[label=\rm(\alph*)]
  \item\label{item:difference} Our definition is more or less the same as that of \cite[Definition 2.15]{CDG}, but is different from  \cite[Definition 7.2]{AHL} because we select the factor $\phi(a_0 a_n)$ instead of $\phi(a_na_0)$ in \eqref{eq:cyclic}.  Of course, this does not matter if $\phi$ is tracial, which is assumed in \cite{CHS,CLS}. However, in the operator model in \cite[Example 7.1]{AHL} $\phi$ can be nontracial although $\Phi$ is tracial.

    \item\label{rem:item:trivial}
   Except for trivial cases, if $\F_1$ is closed with respect to the action of $\A_1$ then the pair $(\A_1,\F_1)$ cannot be cyclic-antimonotone independent. Assuming to the contrary that $(\A_1,\F_1)$ is cyclic-antimonotone independent and $\A_1 \F_1 \A_1 \subseteq \F_1$, we deduce for $a_0, a_1 \in \A_1$ and $f \in \F_1$ that 
$  \Phi(a_0fa_1) = \phi(a_0 a_1)\Phi(f); 
$ 
    on the other hand, due to the fact $a_0 f \in \F_1$, 
    \[
      \Phi(a_0 f a_1) = \Phi((a_0 f) a_1)= \phi(a_1)\Phi(a_0 f) = \phi(a_0)\phi(a_1)\Phi(f).
    \]
    Therefore, either $\Restr{\varphi}{ \A_1}$ is a homomorphism or $\Restr{\Phi}{\F_1}$ is the zero map. 
    
    \item Except for trivial cases, if $(\A_1,\F_1)$ is cyclic-antimonotone independent and $\Phi$ is tracial on $\F_1$ then $\phi$ is tracial on $\A_1$. 
    For example, if there exists $f \in \F_1$ with $\Phi(f^2)\ne0$ then for every $a_1,a_2 \in \A_1$ we have
      \begin{equation*}
        \phi(a_1 a_2) \Phi(f^2) = \Phi(a_1 f^2 a_2) = \Phi((f a_2) (a_1 f)) =  \phi(a_2 a_1) \Phi(f^2),
      \end{equation*}
   and hence 
     $ 
        \phi(a_1 a_2) =\phi(a_2 a_1).
      $   
  Note that this remark does not apply to the definition of cyclic-monotone independence in \cite{AHL}, see also \ref{item:difference} above. 
  \end{enumerate}
\end{remark}

\subsection{The infinitesimal ncps associated with ncps of type B${}^\prime$} \label{sec:typeB_NCPS}

To describe a limiting behavior of random matrices with perturbation, we formulate a special type of infinitesimal ncps.

\begin{definition} \label{def:typeB_NCPS} 
Let $(\A,\phi,\F,\Phi)$ be a ncps of type B${}^\prime$. 
\begin{enumerate}[label=\rm(\roman*)]
 \item\label{item:bprime1} We construct the unital algebra $\Alg{\A}{\F}:=\A \oplus \F$ with multiplication 
  \begin{equation}\label{eq:multiplication_b'}
    (a_1,f_1) (a_1,f_2) := (a_1 a_2, a_1 f_2 + f_1 a_2 + f_1 f_2) \quad  \text{for} \quad (a_1,f_1), (a_2, f_2) \in \A \oplus \F. 
  \end{equation}
 Via the obvious embeddings of $\A$ and $\F$ into $\Alg{\A}{\F}$, we often regard $\A$ and $\F$ as subalgebras of $\Alg{\A}{\F}$.
 With this convention, we may write $a+f$ instead of $(a,f) \in \Alg{\A}{\F}$ and call $a$ the main term and $f$ the perturbation term of $a+f$.  It is clear that the unit $1_\A$ of $\A$ is also the unit of $\Alg{\A}{\F}$.
 In this paper $\Alg{\A}{\F}$ is often denoted by $\B$ and its elements are denoted by $b=a+f$.

 \item With a slight abuse of notation, we define 
  \begin{equation}\label{eq:notation_phi}
  \varphi(a+f) := \varphi(a)\quad \text{ and }\quad  \varphi'(a+f) := \Phi(f) \quad \text{for}\quad  a + f \in \Alg{\A}{\F}. 
  \end{equation} 
  We call $(\Alg{\A}{\F}, \phi, \phi')$ the \textit{infinitesimal ncps associated with $(\A,\phi,\F,\Phi)$}.  The definition of $\phi'$ is motivated by a random matrix model, see Remark \ref{rem:phi'}.  
\end{enumerate}
\end{definition}

The definition of ncps of type B${}^\prime$ is quite similar to that of type B by Biane, Goodman and Nica. For comparison, the definition of the latter is given below.  The main difference is \eqref{eq:multiplication_b'} and \eqref{eq:linking}: the multiplication of perturbation terms always vanishes in the type B case although it need not in the type B${}^\prime$ case. 

\begin{definition}[{Definition in \cite[Subsection 6.1]{BGN}}]
  A ncps of type B is a system $(\A, \phi, \V, f)$,  where 
  \begin{enumerate}
    \item $(\A,\phi)$ is a ncps, 
    \item $\V$ is an $\A$-bimodule and $f\colon\V \to \C$ is a linear functional. 
  \end{enumerate}
\end{definition}
A ncps of type B 
  $(\A, \phi, \V, f)$ associates the link-algebra, which is the vector space $\mathcal{M}=\A \oplus \V$  equipped with the associative product 
\begin{equation}\label{eq:linking}
  (a, \xi) \cdot (b, \eta) := (ab, a \eta + \xi b), 
\end{equation}
and the linear functionals $\E, \E'\colon \mathcal{M} \to \C$ by 
\begin{equation*}
\E(a,\xi):= \phi(a) \qquad \text{and} \qquad \E'(a,\xi):= f(\xi). 
\end{equation*}
Then $(\mathcal{M}, \E, \E')$ is an infinitesimal ncps.

\subsection{Weak B${}^\prime$-freeness and B${}^\prime$-freeness} \label{sec:B'free}

We consider a mixture of freeness and cyclic-antimonotone independence, abstracting the relationships between independent, rotationally invariant random matrices and finite-rank (or trace-class) perturbations.

In this subsection, we keep the setting of Definition \ref{def:typeB_NCPS}: $(\A, \varphi,\F,\Phi)$ denotes a ncps of type B${}^\prime$ and $(\B = \A\,\langle\F\rangle, \varphi, \varphi')$ the infinitesimal ncps associated with $(\A, \varphi,\F, \Phi)$.



\begin{definition} \label{def:indep_with_perturbation} Let $(\A_i)_{i\in I}$ be subalgebras of $\A$ containing $1_\A$ and $(\F_j)_{j\in J}$ be subalgebras of $\F$.  We say that the pair $((\A_i)_{i\in I}, (\F_j)_{j\in J})$ is \textit{weakly B\,${}^\prime$-free} if 
  \begin{enumerate}[label=\rm(F),leftmargin=1.5cm]
    \item $(\A_i)_{i \in I}$ are free with respect to $(\A,\varphi)$ and  
    \end{enumerate}
    \begin{enumerate}[label=\rm(CM),leftmargin=1.5cm]
   \item\label{item:CM} the pair $(\AAlg{\A_i: i \in I}, \AAlg{\F_j: j \in J} )$ of subalgebras generated by $\{\A_i\}_{i\in I}$ and $\{\F_j\}_{j\in J}$ is cyclic-antimonotone independent with respect to $(\A,\phi,\F,\Phi)$.
  \end{enumerate}
  If this is the case then at the level of infinitesimal ncps,   the restriction of $\varphi'$ to the subalgebra $\langle \A_i,\F_j: i\in I,j\in J\rangle\subseteq \B$ is uniquely determined by $\Restr{\varphi}{\A_i}, i\in I$ and $\Restr{\varphi'}{\AAlg{\F_j: j \in J}}$.

\end{definition}

\begin{definition} \label{def:typeBindependence}
 Let $(\A_i)_{i\in I}$ be subalgebras of $\A$ containing $1_\A$ and $(\F_j)_{j\in J}$ be subalgebras of $\F$. We say that the pair $((\A_i)_{i\in I}, (\F_j)_{j\in J})$ is \textit{B\,${}^\prime$-free}  if 
    \begin{enumerate}[label=\rm(wB${^\prime}$),leftmargin=1.3cm]
    \item $((\A_i)_{i\in I}, (\F_j)_{j\in J})$ is weakly B${}^\prime$-free and  
     \end{enumerate}
 \begin{enumerate}[label=\rm(T),leftmargin=1.3cm]
    \item $(\F_j)_{j \in J}$ are trivially independent.
  \end{enumerate}
  If this is the case then the restriction of $\varphi'$ to the subalgebra $\langle \A_i,\F_j: i\in I,j\in J\rangle\subseteq \B$ is uniquely determined by $\Restr{\varphi}{\A_i}, i\in I$ and $\Restr{\varphi'}{\F_j}, j \in J$. 
\end{definition}
We often consider the case $I=J$. In this case, we write \ $(\A_i, \F_i)_{i\in I}$ instead of $((\A_i)_{i\in I}, (\F_i)_{i\in I})$. 

\begin{definition}
    For a family of subsets $(S_i)_{i\in I}$ of $\A$ and subsets $(T_i)_{i\in I}$ of $\F$, we 
    say that the family $(S_i,T_i)_{i\in I}$ is weakly B${}^\prime$-free if $(\A_i, \F_i)_{i\in I}$ is weakly  B${}^\prime$-free, where 
    $\A_i $ is the subalgebra of $\A$ generated by $1_{\A}$ and $S_i$, and $\F_i$ is the subalgebra of $\F$ generated by $T_i$.  In the same way we also define B${}^\prime$-freeness for $(S_i, T_i)_{i\in I}$. 
\end{definition}


\subsection{Models from perturbed random matrices} \label{sec:RM}

Throughout this subsection, $k\in \N$ is fixed and  $(\X_{i,j}^{(N)})_{i \in [k], j \in J_i}$ and $(\Per_{i,j}^{(N)})_{i \in [k], j \in J_i}$ are families of deterministic matrices in $\M_N(\C)$.\footnote{There is no difficulty in generalizing the results of this subsection to general index sets $I$ instead of $[k]$. Nevertheless we prefer to choose $[k]$ for clear statements of asymptotic freeness. } Also, we assume that for each $i\in [k]$, the family $(\X_{i,j}^{(N)})_{j \in J_i}$ converges in distribution with respect to the normalized trace  $\tr_N$. This means that for any $i\in [k]$ and $j_1,j_2,\dots, j_n \in J_i, n\in \N$, the following limit exists in $\C$: 
\begin{equation} \label{eq:conv_distr}
    \lim_{N\to\infty}\tr_N(\X_{i, j_1}^{(N)}\X_{i, j_2}^{(N)} \cdots \X_{i, j_n}^{(N)}). 
\end{equation}
Later, we will also put assumptions on $(\Per_{i,j}^{(N)})_{i \in [k], j \in J_i}$ depending on random matrix models. 

To describe the limiting distributions of our random matrix models, we introduce the unital algebra
\begin{align*}
&\B := \C\langle x_{i,j}, \per_{i,j}: i \in [k], j\in J_i\rangle, 
\end{align*}
where $x_{i,j}, g_{i,j}~(i\in[k], j\in J_i)$ are noncommuting indeterminates. Let $\A$ be the subalgebra of $\B$ generated by $\{1_\B, x_{i,j}: i\in [k], j\in J_i\}$ and let $\F$ be the subalgebra of $\B$ consisting of polynomials in $\B$ such that every monomial contains at least one factor $\per_{i,j}$. We can also write $\F:= \A\langle \per_{i,j}: i \in [k], j\in J_i\rangle_0$ in the notation of Definition \ref{def:typeB'}.
Note that $\B$ can be naturally identified with $\Alg{\A}{\F}$.

\subsubsection*{Asymptotic weak B${}^\prime$-freeness}
Moreover, let $(U_i^{(N)})_{i\in [k]}$ be a family of independent Haar unitary random matrices in $\M_N(\C)$ and suppose that for any $i_1, i_2,\dots, i_n\in [k], j_1 \in J_{i_1}, \dots, j_n \in J_{i_n}$, the limit
\begin{equation} \label{eq:conv_distr2}
   \lim_{N\to\infty}\Tr_N(\Per_{i_1, j_1}^{(N)}\Per_{i_2,j_2}^{(N)} \cdots \Per_{i_n,j_n}^{(N)})
\end{equation}
exists, where $\Tr_N$ is the nonnormalized trace.  
For notational simplicity, we often omit the superscripts ${}^{(N)}$ below. 

\begin{theorem} \label{thm:asymp_wB'}  Assume conditions \eqref{eq:conv_distr} and   \eqref{eq:conv_distr2}. 
For any $P=P(x_{i,j},\per_{i,j}: i\in [k], j\in J_i) \in \B$ and $Q=Q(x_{i,j},\per_{i,j}: i\in [k],j\in J_i) \in \F$ the following limits exist:  
\begin{align}
&\phi(P) := \lim_{N\to\infty}\E\circ \tr_N(P(U_i \X_{i,j} U_i^*,  \Per_{i,j}: i\in [k], j\in J_i)), \label{eq:asymp_free}\\
&\Phi(Q) := \lim_{N\to\infty}\E\circ \Tr_N(Q(U_i \X_{i,j} U_i^*,  \Per_{i,j}: i\in [k], j\in J_i)). \label{eq:asymp_free2}
\end{align}
Convergences on the right-hand sides of \eqref{eq:asymp_free} and \eqref{eq:asymp_free2} also hold almost surely without taking the expectations. Moreover, the family $(\{x_{i,j}\}_{j\in J_i},\{\per_{i,j}\}_{j\in J_i})_{i\in [k]}$ is weakly B${}^\prime$-free in  $(\A,\Restr{\phi}{\A},\F,\Phi)$. 
\end{theorem}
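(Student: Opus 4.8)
The plan is to reduce everything to two well-understood asymptotic facts about Haar unitary matrices: (i) asymptotic freeness of the conjugated main parts $U_i\X_{i,j}U_i^*$, and (ii) the behaviour of traces of alternating products after conjugation, which is exactly where cyclic-antimonotone independence comes from. Concretely, I would first fix an alternating word and expand $P,Q$ into monomials in the generators $x_{i,j},\per_{i,j}$; by multilinearity it suffices to prove existence of the limits and the algebraic relations on such monomials. A monomial in $\B$, after the substitution $x_{i,j}\mapsto U_i\X_{i,j}U_i^*$ and $\per_{i,j}\mapsto \Per_{i,j}$, becomes an alternating product of blocks of the form $U_{i}W_i U_i^*$ (where $W_i$ is a word in the $\X_{i,\bullet}$) interspersed with words $G$ in the $\Per_{\bullet,\bullet}$. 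The existence of the limit \eqref{eq:asymp_free} for such products is precisely Voiculescu's asymptotic freeness theorem: the families $(U_i\X_{i,j}U_i^*)_j$ are asymptotically free (with amalgamation over nothing), using \eqref{eq:conv_distr} for the marginal distributions, and the constant matrices $\Per$ contribute via \eqref{eq:conv_distr2}. The almost-sure statement follows from the standard concentration argument for Haar unitaries (Lipschitz concentration on $U(N)$, or the variance-bound/Borel--Cantelli route), which I would invoke rather than reprove.

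For the nonnormalized-trace limit \eqref{eq:asymp_free2} and the cyclic-antimonotone relation, the key computation is: for a word that alternates between main-part blocks $M_\ell = U_{i_\ell} W_\ell U_{i_\ell}^*$ and perturbation blocks $G_1,\dots,G_n$ (each of bounded rank), one has
\begin{equation*}
\E\,\Tr_N\!\big(M_0\, G_1\, M_1\, G_2\, \cdots\, G_n\, M_n\big) \;=\; \E\,\tr_N(M_0 M_n)\cdot\Big[\prod_{1\le \ell\le n-1}\E\,\tr_N(M_\ell)\Big]\cdot \Tr_N(G_1\cdots G_n) + o(1),
\end{equation*}
up to the usual cyclic rearrangement of $M_0$ and $M_n$ into a single factor $M_nM_0$ inside $\tr_N$. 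This is proved by the graph-sum / Weingarten expansion: each $\Tr_N$ of a long alternating word is a sum over pairings of the $U$'s and $U^*$'s weighted by Weingarten functions, and because the $\Per$-blocks have bounded rank the only pairings that survive in the limit are those that "close up" each main block $M_\ell$ separately (contributing $\tr_N(M_\ell)$) while the bounded-rank blocks get multiplied together along the cycle (contributing the single factor $\Tr_N(G_1\cdots G_n)$, which is $O(1)$ by \eqref{eq:conv_distr2}). This is exactly the computation behind \cite[Proposition 4.11]{CHS} and the cyclic-monotone results of \cite{S,CHS}; I would cite those and carry out the Weingarten bookkeeping only to the extent needed to see that the stated leading term is correct and the remainder is $o(1)$ (and, via concentration, a.s.\ negligible). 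The tricky sign/ordering point — whether one gets $\varphi(a_0a_n)$ or $\varphi(a_na_0)$ — is harmless here because $\tr_N$ is tracial, matching Remark~\ref{rem:cm}\ref{item:difference}.

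Granting these two facts, defining $\phi$ and $\Phi$ by the limits \eqref{eq:asymp_free}, \eqref{eq:asymp_free2} gives a well-defined ncps of type B${}^\prime$ on $\B\cong\Alg{\A}{\F}$ (linearity is immediate; $\phi(1)=1$ and the $\A$-bimodule structure of $\F$ are built into the identification of $\B$ with $\Alg{\A}{\F}$). Freeness of $(\{x_{i,j}\}_j)_{i\in[k]}$ with respect to $\Restr{\phi}{\A}$ is (i). For condition \ref{item:CM}, take $a_0,\dots,a_n\in \AAlg{\A_i:i\in[k]}$ and $f_1,\dots,f_n\in\AAlg{\F_j:j\in J}$ and apply the displayed asymptotic identity together with freeness to recognize $\lim \E\tr_N$ of each centered main block as $\phi$ evaluated on the corresponding element of $\A$; this yields exactly \eqref{eq:cyclic}. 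The main obstacle is the second step — proving the asymptotic factorization for $\Tr_N$ of long alternating words with bounded-rank insertions, controlling the error uniformly so that it survives passage to arbitrary polynomials $P\in\B$, $Q\in\F$ and so that the almost-sure upgrade goes through; everything else is assembling known theorems.
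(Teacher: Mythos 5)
Your proposal follows essentially the same route as the paper: existence of \eqref{eq:asymp_free} and its almost-sure version via Collins's asymptotic freeness results (using that \eqref{eq:conv_distr2} forces the normalized traces of the perturbation words to vanish), and existence of \eqref{eq:asymp_free2} together with the cyclic-antimonotone factorization via the Weingarten computations of \cite{S,CHS} --- the paper simply cites \cite[Theorems 4.8 and 4.9]{CHS} where you propose to partially redo the bookkeeping. The only small caveat is that your argument leans on ``bounded rank'' of the $\Per_{i,j}$, whereas the actual hypothesis is just the convergence \eqref{eq:conv_distr2} of nonnormalized traces (the ranks need not be uniformly bounded), which is exactly the form in which the cited CHS results are stated.
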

\begin{proof} By \eqref{eq:conv_distr2}, for any polynomial $R(\per_{i,j}: i\in [k], j\in J_i)$ without a constant term, 
\begin{equation}\label{eq:F_null}
    \lim_{N\to\infty}\tr_N(R(\Per_{i,j})) =0. 
\end{equation}
Therefore, the family $(\{U_1 \X_{1,j} U_1^*\}_{j\in J_1},\{U_2 \X_{2,j} U_2^*\}_{j\in J_2}, \dots, \{U_k \X_{k,j} U_k^*\}_{j\in J_k}, \{\Per_{i,j}\}_{i\in [k], j\in J_i})$ is asymptotically free with respect to $\E\circ \tr_N$ (see \cite[Corollary 3.4]{C03}) and hence \eqref{eq:asymp_free} exists. Note that the monomials of $P$ containing some $\per_{i,j}$ as a factor do not contribute to the limit because of \eqref{eq:F_null}. Also, the almost sure version of \eqref{eq:asymp_free} is a consequence of almost sure asymptotic freeness \cite[Theorem 3.7 and Remark 3.10]{C03}. 
The convergences \eqref{eq:asymp_free2} and its almost sure version are exactly \cite[Theorem 4.8]{CHS} and \cite[Theorem 4.9]{CHS}, respectively. The last assertion follows from discussions and calculations above. 
\end{proof}


The previous theorem allows us to define the linear functionals $\phi\colon \B \to \C$ and $\Phi\colon \F\to\C$, thus providing a ncps of type B${}^\prime$ $(\A,\Restr{\phi}{\A},\F,\Phi)$. Because of  \eqref{eq:F_null} we have $\phi = \Restr{\phi}{\A}\oplus\,0$. This further associates the infinitesimal ncps $(\B,\phi,\phi')$. Recall that $\phi'$ is defined on $\B$ as $ \phi'=0\oplus \Phi$. 

\begin{remark} \label{rem:phi'}
The definition of $\phi'$ can also be given by the limit  
\[
\phi'(P)= \lim_{N\to\infty}\Tr_N[P(U_i \X_{i,j} U_i^*,  \Per_{i,j}: i\in [k], j\in J_i) - P(U_i \X_{i,j} U_i^*, 0: i\in [k], j\in J_i)],\qquad P\in \B,    
\]
i.e., it is the limit of the nonnormalized trace computed after removing the main part (the monomials of $P$ that do not contain $g_{i,j}$'s). 
\end{remark}


The construction of infinitesimal ncps can also be done at the level of random matrices, which leads to the following reformulation of Theorem \ref{thm:asymp_wB'}. We only state the almost sure version. 
\begin{corollary}\label{cor:asymp_wB'}
Let $\B_N := \M_N(\C)\oplus \M_N(\C)$ equipped with type B{}$^\prime$ multiplication \eqref{eq:multiplication_b'}. 
Then we have for all $P=P(x_{i,j},\per_{i,j}: i\in [k], j\in J_i) \in \B$ 
\begin{align*}
&\lim_{N\to\infty}(\tr_N\oplus\, 0)(P(U_i \X_{i,j} U_i^*\oplus \,0,\,  0\oplus\Per_{i,j}: i\in [k], j\in J_i)) = \phi(P) \quad \text{a.s.,}\\
& \lim_{N\to\infty}(0\oplus \Tr_N)(P(U_i \X_{i,j} U_i^*\oplus \,0,\,  0\oplus\Per_{i,j}: i\in [k], j\in J_i)) = \phi'(P)\quad \text{a.s.}
\end{align*}
\end{corollary}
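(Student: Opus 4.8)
The plan is to deduce Corollary~\ref{cor:asymp_wB'} from Theorem~\ref{thm:asymp_wB'} by unwinding the type~B${}^\prime$ multiplication \eqref{eq:multiplication_b'} on $\B_N=\M_N(\C)\oplus\M_N(\C)$ via two algebra homomorphisms into $\M_N(\C)$. First I would record that the coordinate projection $\pi_1\colon\B_N\to\M_N(\C)$, $(A,F)\mapsto A$, and the ``total'' map $\pi\colon\B_N\to\M_N(\C)$, $(A,F)\mapsto A+F$, are both unital algebra homomorphisms for the multiplication \eqref{eq:multiplication_b'}: for $\pi_1$ because the first coordinate of \eqref{eq:multiplication_b'} is $a_1a_2$, and for $\pi$ because the ordinary product $(A_1+F_1)(A_2+F_2)$ in $\M_N(\C)$ expands to $A_1A_2+A_1F_2+F_1A_2+F_1F_2$, which is exactly the sum of the two coordinates of \eqref{eq:multiplication_b'}.

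Next I would use that, since $\B=\C\langle x_{i,j},\per_{i,j}\rangle$ is free on its generators, the substitution $x_{i,j}\mapsto U_i\X_{i,j}U_i^*\oplus0$, $\per_{i,j}\mapsto0\oplus\Per_{i,j}$ extends to a unital homomorphism $\B\to\B_N$; composing it with $\pi_1$ and with $\pi$ gives the unital homomorphisms $\B\to\M_N(\C)$ determined by $x_{i,j}\mapsto U_i\X_{i,j}U_i^*$, $\per_{i,j}\mapsto0$, respectively $x_{i,j}\mapsto U_i\X_{i,j}U_i^*$, $\per_{i,j}\mapsto\Per_{i,j}$. Writing $(A_N,F_N)$ for the image of $P$ in $\B_N$, this yields $A_N=\pi_1(A_N,F_N)=P(U_i\X_{i,j}U_i^*,0)$ and $A_N+F_N=\pi(A_N,F_N)=P(U_i\X_{i,j}U_i^*,\Per_{i,j})$, both computed with ordinary matrix multiplication, hence
\[
F_N=P(U_i\X_{i,j}U_i^*,\Per_{i,j})-P(U_i\X_{i,j}U_i^*,0)=P_\F(U_i\X_{i,j}U_i^*,\Per_{i,j}),
\]
where $P=P_\A+P_\F$ is the decomposition with $P_\A\in\A$ the sum of the monomials containing no $\per_{i,j}$ and $P_\F\in\F$ the rest. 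Applying the functionals $\tr_N\oplus0$ and $0\oplus\Tr_N$ then reduces the two asserted statements to $\lim_N\tr_N(P_\A(U_i\X_{i,j}U_i^*))=\phi(P)$ a.s.\ and $\lim_N\Tr_N(P_\F(U_i\X_{i,j}U_i^*,\Per_{i,j}))=\phi'(P)$ a.s.

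Finally, the first of these is the almost sure version of \eqref{eq:asymp_free} applied to the main part $P_\A$ (using $\phi(P)=\phi(P_\A)$), and the second is the almost sure version of \eqref{eq:asymp_free2} applied to $P_\F\in\F$ together with $\phi'(P)=\Phi(P_\F)$ from Definition~\ref{def:typeB_NCPS}; the latter is precisely what Remark~\ref{rem:phi'} already records. I do not expect a genuine obstacle: all the analytic input---almost sure asymptotic freeness of $\{U_i\X_{i,j}U_i^*\}$ with $\{\Per_{i,j}\}$ and the almost sure convergence of the nonnormalized traces in \eqref{eq:conv_distr2}---is contained in Theorem~\ref{thm:asymp_wB'} (hence in \cite{C03,CHS}). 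The only point requiring care is the bookkeeping of the second paragraph, namely verifying that the type~B${}^\prime$ product on $\B_N$ matches the ordinary matrix product under $\pi$ and correctly reading off the two coordinates of the image of $P$; this is routine once $\pi_1$ and $\pi$ are known to be homomorphisms.
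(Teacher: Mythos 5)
Your proof is correct and follows exactly the route the paper intends: the corollary is stated without proof as a ``reformulation'' of Theorem~\ref{thm:asymp_wB'}, and your two homomorphisms $\pi_1$ and $\pi$ supply precisely the bookkeeping that identifies the two coordinates of the image of $P$ in $\B_N$ with $P(U_i\X_{i,j}U_i^*,0)$ and the difference recorded in Remark~\ref{rem:phi'}, after which the almost sure versions of \eqref{eq:asymp_free} and \eqref{eq:asymp_free2} finish the argument. No gaps.
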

\begin{remark}
    This result can be rephrased as follows: the $k+1$ subalgebras \[
    \AAlg{I_{\M_N(\C)},U_iX_{i,j}U_i^*:j \in J_i}\oplus \{0\}~(i\in[k]),\quad \C I_{\M_N(\C)} \oplus \AAlg{G_{i,j}:j\in J_i, i\in[k]}\]
    are asymptotically infinitesimally free with respect to $(\B_N, \tr_N \oplus\,0, 0\oplus \Tr_N)$, see Theorem \ref{thm:B'free}. 
\end{remark}

\subsubsection*{Asymptotic B${}^\prime$-freeness}
Let $(U_i,V_i:i\in [k])$ be a family of independent Haar unitary random matrices in $\M_N(\C)$. We now put the  assumption (weaker than \eqref{eq:conv_distr2}) that for any $i\in [k]$ and $j_1,j_2,\dots, j_n \in J_i$, the following limit exists in $\C$: 
\begin{equation} \label{eq:conv_distr3}
   \lim_{N\to\infty}\Tr_N(\Per_{i,j_1}^{(N)}\Per_{i,j_2}^{(N)} \cdots \Per_{i,j_n}^{(N)}).
\end{equation}
\begin{theorem}\label{lem:asymp_B'} Under conditions \eqref{eq:conv_distr} and  \eqref{eq:conv_distr3}, 
for any $P=P(x_{i,j},\per_{i,j}: i\in [k], j\in J_i) \in \B$ and $Q=Q(x_{i,j},\per_{i,j}: i\in [k],j\in J_i) \in \F$ the following limits exist in $\C$:  
\begin{align}
&\phi(P) := \lim_{N\to\infty}\E\circ \tr_N(P(U_i \X_{i,j} U_i^*,  V_i\Per_{i,j}V_i^*: i\in [k], j\in J_i)), \label{eq:tilde_asymp_free}\\
&\hat{\Phi}(Q) := \lim_{N\to\infty}\E\circ \Tr_N(Q(U_i \X_{i,j} U_i^*,  V_i\Per_{i,j}V_i^*: i\in [k], j\in J_i)). \label{eq:tilde_asymp_free2}
\end{align}
Convergences on the right-hand sides of  \eqref{eq:tilde_asymp_free} and \eqref{eq:tilde_asymp_free2} still hold almost surely  without taking the expectations.  Moreover, the family $(\{x_{i,j}\}_{j\in J_i},\{\per_{i,j}\}_{j\in J_i})_{i\in [k]}$ is B${}^\prime$-free in $(\A,\Restr{\phi}{\A},\F, \hat\Phi)$. 
\end{theorem}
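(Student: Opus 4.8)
The plan is to mirror the proof of Theorem \ref{thm:asymp_wB'} but replace the deterministic perturbations $\Per_{i,j}$ by the conjugated ones $V_i \Per_{i,j} V_i^*$, and to upgrade the conclusion from weak B${}^\prime$-freeness to B${}^\prime$-freeness by adding the trivial-independence statement for the perturbation parts. First I would observe that under the weaker hypothesis \eqref{eq:conv_distr3}, for each fixed $i$ the family $\{V_i \Per_{i,j} V_i^*\}_{j \in J_i}$ still converges in nonnormalized trace to the same limit as $\{\Per_{i,j}\}_{j \in J_i}$ (conjugation by a unitary leaves the trace invariant), so in particular $\tr_N(R(V_i \Per_{i,j} V_i^*)) \to 0$ for any polynomial $R$ without constant term, exactly as in \eqref{eq:F_null}. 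Therefore the family
\[
\bigl(\{U_1 \X_{1,j} U_1^*\}_j, \dots, \{U_k \X_{k,j} U_k^*\}_j, \{V_1 \Per_{1,j} V_1^*\}_j, \dots, \{V_k \Per_{k,j} V_k^*\}_j\bigr)
\]
is asymptotically free with respect to $\E \circ \tr_N$ by \cite[Corollary 3.4]{C03} (the $2k$ blocks are independently conjugated by Haar unitaries and each converges in distribution), and almost surely asymptotically free by \cite[Theorem 3.7 and Remark 3.10]{C03}; this gives the existence of the limit \eqref{eq:tilde_asymp_free} and shows the monomials of $P$ containing some $\per_{i,j}$ do not contribute to $\phi(P)$, so $\phi = \Restr{\phi}{\A} \oplus 0$ as before.

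Next I would establish \eqref{eq:tilde_asymp_free2} and the cyclic-antimonotone part. The existence (and a.s.\ version) of the nonnormalized-trace limit $\hat\Phi$, together with cyclic-antimonotone independence of $(\AAlg{\A_i : i}, \AAlg{\F_j : j})$, follows by the same argument as Theorem \ref{thm:asymp_wB'}: this is the content of \cite[Theorems 4.8 and 4.9]{CHS} applied to the matrices $\Per_{i,j}' := V_i \Per_{i,j} V_i^*$, since those results only require the existence of the limiting nonnormalized-trace moments of the perturbation family, which we have arranged. So $((\{x_{i,j}\}_j)_{i}, (\{\per_{i,j}\}_j)_i)$ is weakly B${}^\prime$-free in $(\A, \Restr{\phi}{\A}, \F, \hat\Phi)$, which is condition (wB${}^\prime$) of Definition \ref{def:typeBindependence}.

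It remains to prove condition (T): the subalgebras $(\F_i)_{i \in [k]}$, where $\F_i = \AAlg{\per_{i,j} : j \in J_i}$, are trivially independent with respect to $\hat\Phi$, i.e.\ $\hat\Phi(f_1 \cdots f_n) = 0$ whenever $n \ge 2$, the indices are alternating, and $f_\ell \in \F_{i_\ell}$. At the matrix level this says $\lim_N \E \circ \Tr_N(R_1(V_{i_1}\Per_{i_1,\cdot}V_{i_1}^*) \cdots R_n(V_{i_n}\Per_{i_n,\cdot}V_{i_n}^*)) = 0$ for polynomials $R_\ell$ without constant term and $i_1 \ne i_2 \ne \cdots \ne i_n$. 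This is precisely the asymptotic trivial independence of $\{V_i \Per_{i,\cdot} V_i^*\}$, already observed in \cite[Proposition 4.11]{CHS}; the heuristic is that each $R_\ell(V_{i_\ell}\Per_{i_\ell,\cdot}V_{i_\ell}^*)$ has bounded rank and, because of the independent Haar conjugations, the ranges of consecutive factors are asymptotically in ``general position,'' forcing the trace of an alternating product of such bounded-rank matrices to vanish in the limit. The main obstacle, and the only genuinely new point beyond quoting \cite{CHS}, is making sure the cited propositions of \cite{CHS} are invoked in a form general enough to cover a product mixing the main parts $U_i\X_{i,j}U_i^*$ and the perturbation parts $V_i\Per_{i,j}V_i^*$ with $2k$ independent families of Haar unitaries; I would handle this by noting that \cite[Theorem 4.8]{CHS} is stated exactly for a product alternating between rotationally invariant ``bulk'' matrices and bounded-rank ``spike'' matrices, and our setup is a special case once we regard $(U_i, V_i)_{i \in [k]}$ as the family of conjugating unitaries. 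Finally, the uniqueness statement (that $\Restr{\varphi'}{\langle \A_i, \F_j\rangle}$ is determined by the $\Restr{\varphi}{\A_i}$ and $\Restr{\varphi'}{\F_j}$) is automatic from Definition \ref{def:typeBindependence} once (wB${}^\prime$) and (T) are known, so no further work is needed there.
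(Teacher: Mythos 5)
Your proposal is correct and follows essentially the same route as the paper: asymptotic freeness of the $2k$ conjugated families for the normalized trace, \cite[Theorems 4.8 and 4.9]{CHS} for the nonnormalized-trace limits, and \cite[Proposition 4.11]{CHS} for the trivial independence of the conjugated perturbation parts. The only (harmless) difference is ordering: the paper first conditions on the $V_i$ via Fubini and checks that the mixed nonnormalized-trace limits across different indices $i$ exist (equal to the assumed limit when all indices coincide, zero otherwise by Proposition 4.11) \emph{before} invoking the CHS theorems, whereas you assert ``which we have arranged'' at a point where only the within-family convergence \eqref{eq:conv_distr3} is in hand and supply the cross-family input only in your final paragraph.
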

\begin{remark} As the proofs show, the definitions \eqref{eq:asymp_free} and \eqref{eq:tilde_asymp_free} coincide. 
\end{remark}
\begin{proof} For notational simplicity we denote $\hat{\Per}_{i,j} := V_i \Per_{i,j} V_i^*$. 
    The proofs of \eqref{eq:tilde_asymp_free} and its almost sure version follow from asymptotic freeness of the family 
    \[
    (\{U_1 \X_{1,j} U_1^*\}_{j\in J_1},  \{U_2 \X_{2,j} U_2^*\}_{j\in J_2}, \dots, \{U_k \X_{k,j} U_k^*\}_{j\in J_k},  \{\hat{\Per}_{1,j}\}_{j\in J_1}, \{\hat{\Per}_{2,j}\}_{j \in J_2}, \dots, \{\hat{\Per}_{k,j}\}_{j \in J_k}).
    \] 
    Note that the monomials of $P$ that contain at least one $\per_{i,j}$ do not contribute to the limit, i.e., $\hat{\Per}_{i,j}$ converges to zero, according to condition \eqref{eq:conv_distr3}. 
    
    For  \eqref{eq:tilde_asymp_free2}, with the help of Fubini's theorem $\E = \E_{\mathbf V}\E_{\mathbf U}$, one can first fix the Haar unitaries $V_i$ and apply \eqref{eq:asymp_free2}. Note that the limit \begin{equation}\label{eq:asymp_trivial}        \lim_{N\to\infty}\E_{\mathbf V}[\Tr_N(\hat{\Per}_{i_1,j_1}\hat{\Per}_{i_2,j_2} \cdots \hat{\Per}_{i_n,j_n})] 
    \end{equation} 
    exists. The existence of this limit is exactly our assumption  \eqref{eq:conv_distr3} when $i_1=i_2= \cdots = i_n$; otherwise the limit is zero by the proof of \cite[Proposition 4.11]{CHS}. Finally, the almost sure version of \eqref{eq:tilde_asymp_free2} follows by combining the almost sure versions of \eqref{eq:asymp_free2} and  \eqref{eq:asymp_trivial}, see \cite[Proposition 4.11]{CHS}.  

    The last assertion follows from the discussions and calculations above. 
\end{proof}

This theorem allows us to define the linear functionals $\phi\colon \B \to \C$ and $\hat{\Phi}\colon \F\to\C$, thus providing a ncps of type B${}^\prime$ $(\A,\Restr{\phi}{\A},\F,\hat{\Phi})$ and the associated infinitesimal ncps $(\B,\phi,\hat{\phi}')$. Note that $\hat\phi'$ can also be defined as the limit of trace in a similar manner to Remark \ref{rem:phi'}. The analogue of Corollary \ref{cor:asymp_wB'} can  also be formulated with respect to $(\B_N, \tr_N\oplus\, 0, 0\oplus \Tr_N)$. We omit the precise statement.



\section{B${}^\prime$-freeness and infinitesimal freeness}\label{sec:BFIF}

Throughout this section, we keep the setting of Definition \ref{def:typeB_NCPS}: $(\A, \varphi,\F,\Phi)$ is a ncps of type B${}^\prime$ and $(\B = \A\,\langle\F\rangle, \varphi, \varphi')$ is the infinitesimal ncps associated with $(\A, \varphi,\F, \Phi)$. 
We first note a characterization of cyclic-antimonotone independence in terms of infinitesimal freeness.

\begin{proposition} \label{cor:equivalence}
  Let $\A_1$ be a subalgebra of $\A$ containing $1_\A$ and $\F_1$ be a subalgebra of $\F$. 
  Then $(\A_1,\F_1)$ is cyclic-antimonotone independent in $(\A, \varphi,\F,\Phi)$ if and only if  $\A_1' := \A_1\oplus \{0_\F\}$ and $\F_1' := \C 1_\A\oplus \F_1$ are infinitesimally free in $(\B, \varphi, \varphi')$.
\end{proposition}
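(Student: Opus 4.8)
The plan is to deduce both implications directly from the definitions, the bridge being the elementary observation that $\{0_\F\}\oplus\F$ is a two-sided ideal of $\B$ on which $\varphi$ vanishes, while $\varphi'(0_\A,f)=\Phi(f)$. First I would record the shapes of the two subalgebras: $\A_1'$ and $\F_1'$ are unital subalgebras of $\B$, both containing $1_\B=(1_\A,0_\F)$; their centered parts for $\varphi$ are $\mathring{\A_1'}=\mathring{\A_1}\oplus\{0_\F\}$ and $\mathring{\F_1'}=\{0_\A\}\oplus\F_1$; and from \eqref{eq:multiplication_b'} one computes, for $a_i\in\A_1$ and $f_j\in\F_1$,
\[
(a_0,0_\F)(0_\A,f_1)(a_1,0_\F)(0_\A,f_2)\cdots(0_\A,f_n)(a_n,0_\F)=\big(0_\A,\ a_0f_1a_1f_2\cdots f_na_n\big),
\]
the right-hand side being the product computed inside $\F$ via its $\A$-bimodule structure; an alternating word beginning or ending with an $\F_1'$-factor obeys the same identity after inserting $1_\A$ at the missing end. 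Consequently any alternating word in $\A_1',\F_1'$ containing a centered $\F_1'$-factor lies in $\{0_\F\}\oplus\F$ and is annihilated by $\varphi$, and a length-one word in $\mathring{\A_1'}$ is annihilated by $\varphi$ by centering, so $\varphi$ kills every centered alternating word.

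For ``cyclic-antimonotone $\Rightarrow$ infinitesimally free'' I take an alternating word $b_1b_2\cdots b_\ell$ with $b_k\in\mathring{\A_1'}\cup\mathring{\F_1'}$ and neighbouring factors in different subalgebras. The paragraph above gives $\varphi(b_1\cdots b_\ell)=0$, so it remains to match $\varphi'(b_1\cdots b_\ell)$ with the right side of \eqref{eq:infinitesimal_free}. If $\ell=1$ this is immediate; otherwise the word has an $\F_1'$-factor, hence $\varphi'(b_1\cdots b_\ell)=\Phi(g)$ for the bimodule product $g\in\F$ above, where each $\A_1'$-factor contributes a centered element and $1_\A$ is inserted at any end occupied by an $\F_1'$-factor. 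Expanding $\Phi(g)$ by \eqref{eq:cyclic} gives $\varphi$ of the two outer $\A$-factors, times the product of $\varphi$ of the interior $\A$-factors, times $\Phi$ of the product of all the $f$'s. A short inspection of the alternation patterns then shows: when $\ell\neq1,3$ this quantity contains a factor $\varphi(1_\A\mathring a)=\varphi(\mathring a)=0$ and so vanishes, matching the right side of \eqref{eq:infinitesimal_free} (which is $0$ for even $\ell$, and for odd $\ell\geq5$ is $0$ too since one of its pair-factors $\varphi(b_kb_{\ell+1-k})$ is a product of two $\F_1'$-elements); while for $\ell=3$ the pattern $\F_1'\A_1'\F_1'$ gives $0$ on both sides (its middle factor is in $\A_1'$, so $\varphi'(b_2)=0$), whereas for the pattern $\A_1'\F_1'\A_1'$ formula \eqref{eq:cyclic} returns exactly $\varphi(b_1b_3)\varphi'(b_2)$, the right side of \eqref{eq:infinitesimal_free}.

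For the converse, assuming $\A_1'$ and $\F_1'$ infinitesimally free, I would prove \eqref{eq:cyclic}. By the product identity, $\Phi(a_0f_1a_1\cdots f_na_n)=\varphi'\big((a_0,0_\F)(0_\A,f_1)(a_1,0_\F)\cdots(0_\A,f_n)(a_n,0_\F)\big)$. Writing each $(a_i,0_\F)=(\mathring a_i,0_\F)+\varphi(a_i)1_\B$ and expanding by multilinearity of $\varphi'$ produces a sum over subsets $S\subseteq\{0,1,\dots,n\}$: the $S$-term carries the scalar $\prod_{i\notin S}\varphi(a_i)$ times $\varphi'$ of the word in which $(a_i,0_\F)$ is kept as $(\mathring a_i,0_\F)$ for $i\in S$ and replaced by $1_\B$ otherwise; deleting the $1_\B$'s and merging consecutive $\F_1'$-factors turns this into a centered alternating word in $\A_1',\F_1'$, to which \eqref{eq:infinitesimal_free} applies. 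By the analysis of the previous paragraph (the right side of \eqref{eq:infinitesimal_free} vanishes on every centered alternating word except those of length one lying in $\mathring{\F_1'}$ and those of length three of pattern $\A_1'\F_1'\A_1'$), only $S=\emptyset$ survives, contributing $\big(\prod_{i=0}^n\varphi(a_i)\big)\Phi(f_1\cdots f_n)$, and $S=\{0,n\}$, contributing $\big(\prod_{i=1}^{n-1}\varphi(a_i)\big)\varphi(\mathring a_0\mathring a_n)\Phi(f_1\cdots f_n)$. Summing these and using $\varphi(a_0)\varphi(a_n)+\varphi(\mathring a_0\mathring a_n)=\varphi(a_0a_n)$ yields \eqref{eq:cyclic}.

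The product computation in $\B$ and the expansion into subsets are routine; the step that needs care is the combinatorial bookkeeping — the inspection of alternation patterns in both directions — where one must identify which centered alternating words can have non-vanishing $\varphi'$. Here the ideal property of $\{0_\F\}\oplus\F$, which forces $\varphi$ to annihilate any product of two $\F_1'$-factors, together with the palindromic shape of \eqref{eq:infinitesimal_free}, cuts the surviving words down to lengths one and three; and in the converse direction one must further verify that precisely the two subsets $S=\emptyset$ and $S=\{0,n\}$ contribute and that their contributions recombine into the single factor $\varphi(a_0a_n)$.
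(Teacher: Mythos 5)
Your argument is correct, but it takes a different route from the paper. The paper's proof of this proposition is essentially a citation: it packages $\varphi$ and $\varphi'$ into the $\G$-valued functional $\widetilde\varphi=\varphi+\epsilon\varphi'$ (with $\epsilon^2=0$), reduces both implications to the single $\G$-valued moment identity $\widetilde\varphi(a_0f_1a_1\cdots f_na_n)=\widetilde\varphi(f_1\cdots f_n)\,\widetilde\varphi(a_0a_n)\prod_i\widetilde\varphi(a_i)$, and then defers to (a slight modification of) \cite[Proposition 3.10]{CHS}, suggesting the moment--cumulant formula and vanishing of mixed cumulants as the cleanest way to verify it. You instead give a self-contained, bare-hands verification at the level of scalar moments: the key structural inputs are the product identity $(a_0,0)(0,f_1)\cdots(0,f_n)(a_n,0)=(0,a_0f_1\cdots f_na_n)$, the fact that $\{0_\F\}\oplus\F$ is an ideal killed by $\varphi$ (which, together with the palindromic shape of \eqref{eq:infinitesimal_free} and the vanishing of $\varphi$ on interior centered $\A$-factors, cuts the surviving centered alternating words down to length one in $\mathring{\F_1'}$ and length three of pattern $\A_1'\F_1'\A_1'$), and, for the converse, the subset expansion over $S\subseteq\{0,\dots,n\}$ in which only $S=\emptyset$ and $S=\{0,n\}$ survive and recombine via $\varphi(a_0)\varphi(a_n)+\varphi(\mathring a_0\mathring a_n)=\varphi(a_0a_n)$. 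I checked the pattern analysis in both directions (including the even-length, $\F\A\F$, and odd-length-$\ge 5$ cases) and it is sound. What your approach buys is independence from \cite{CHS} and from the cumulant machinery, at the cost of more case-by-case bookkeeping; the paper's approach is shorter but leaves the actual verification to the reader familiar with the cited proof.
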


\begin{proof} A slight modification of the proof in \cite[Proposition 3.10]{CHS} works. More precisely, one has to replace \cite[Equation (3.24)]{CHS} with 
\[
\widetilde \phi(a_0 f_1 a_1 f_2 \cdots f_n a_n) = \widetilde \phi(f_1 f_2 \cdots f_n)\widetilde\phi(a_0a_n)\prod_{i=1}^n \widetilde \phi(a_i), \qquad a_i \in \A_1', f_i \in \F_1', 
\]
where $\widetilde \phi := \phi + \epsilon \phi'$ (remember that we are assuming $\epsilon^2=0$). When proving this, it might be more useful to use the moment-cumulant formula \eqref{moment-cumulant_tilde} and vanishing of mixed cumulants, rather than using the product formula \cite[Theorem 14.4]{NS}. The other arguments are almost the same. 
\end{proof}

\begin{corollary}\label{cor:cm} Let $a \in \A$ and $f \in \F$ be cyclic-antimonotone independent with respect to $(\phi,\Phi)$. If the distribution of $a$ with respect to $\phi$ is $\mu$ and the distribution of $f$ with respect to $\phi'$ is $\nu$ then the infinitesimal distribution of $a+f$ with respect to $(\phi,\phi')$ is given by $(\mu, \tilde \nu)$, where the distribution $\tilde\nu$ is called the \textit{cyclic-antimonotone convolution} of $\mu$ and $\nu$ and is denoted by  $\clhd{\mu}{\nu}$. It has the expression $\clhd{\mu}{\nu} = \ifconv{0}{\mu\,}{\delta_0}{\nu}$ and 
is characterized by 
\begin{equation}\label{eq:cm_convolution}
    G_{\clhd{\mu}{\nu}}(z) = G_\nu (F_\mu(z)) F_\mu'(z). 
\end{equation}

\end{corollary}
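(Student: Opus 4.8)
The plan is to reduce the statement to the infinitesimal free convolution, whose Cauchy transform is already known by \eqref{eq:infinitesimal_convolution}. Let $\A_1$ be the unital subalgebra of $\A$ generated by $a$ and $\F_1$ the subalgebra of $\F$ generated by $f$. By hypothesis $(\A_1,\F_1)$ is cyclic-antimonotone independent, so Proposition \ref{cor:equivalence} gives that $\A_1' := \A_1\oplus\{0_\F\}$ and $\F_1' := \C 1_\A\oplus\F_1$ are infinitesimally free in $(\B,\phi,\phi')$. Since $a=(a,0)\in\A_1'$ and $f=(0,f)\in\F_1'$, the elements $a$ and $f$ are infinitesimally free in $(\B,\phi,\phi')$, and hence the infinitesimal distribution of $a+f$ with respect to $(\phi,\phi')$ is, by definition, the infinitesimal free convolution of the infinitesimal distributions of $a$ and of $f$.

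Next I would compute those two infinitesimal distributions from the type B${}^\prime$ multiplication \eqref{eq:multiplication_b'}. One checks by induction that $(a,0)^n=(a^n,0)$ and $(0,f)^n=(0,f^n)$ for $n\ge1$ — it is precisely the retention of the term $f_1f_2$ in \eqref{eq:multiplication_b'} that makes the second identity hold. Hence $\phi(a^n)=\mu(x^n)$ and $\phi'(a^n)=\Phi(0)=0$, so the infinitesimal distribution of $a$ is $(\mu,0)$; and $\phi(f^n)=0=\delta_0(x^n)$ for $n\ge1$ while $\phi'(f^n)=\Phi(f^n)=\nu(x^n)$, so the infinitesimal distribution of $f$ is $(\delta_0,\nu)$. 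Therefore the infinitesimal distribution of $a+f$ is $(\mu,0)\boxplus(\delta_0,\nu)=\big(\mu\boxplus\delta_0,\ \ifconv{0}{\mu}{\delta_0}{\nu}\big)$. Since $F_{\delta_0}(z)=z$ gives $F_{\delta_0}^{-1}(z)=z$, the free-convolution formula yields $F_{\mu\boxplus\delta_0}^{-1}=F_\mu^{-1}+F_{\delta_0}^{-1}-z=F_\mu^{-1}$, i.e.\ $\mu\boxplus\delta_0=\mu$; this simultaneously establishes the first component $\mu$ and the identity $\clhd{\mu}{\nu}=\ifconv{0}{\mu}{\delta_0}{\nu}$.

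Finally I would specialize \eqref{eq:infinitesimal_convolution} to $\mu_1=\mu$, $\nu_1=0$, $\mu_2=\delta_0$, $\nu_2=\nu$. Because $\mu_1\boxplus\mu_2=\mu$, the subordination function $\omega_1=F_\mu^{-1}\circ F_\mu$ is the identity, so $\omega_1'\equiv1$, and since $G_{\nu_1}=G_0\equiv0$ the first summand vanishes. For the second, $G_{\delta_0}(z)=1/z$ forces $F_{\delta_0}(z)=z$, hence $F_{\delta_0}^{-1}$ is the identity and $\omega_2=F_{\delta_0}^{-1}\circ F_\mu=F_\mu$ with $\omega_2'=F_\mu'$. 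Formula \eqref{eq:infinitesimal_convolution} then reads $G_{\clhd{\mu}{\nu}}(z)=G_\nu(F_\mu(z))\,F_\mu'(z)$, which is \eqref{eq:cm_convolution}.

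Essentially all of this is bookkeeping resting on the already proved Proposition \ref{cor:equivalence}, so there is no serious obstacle. The only point that deserves a moment's care is handling the two degenerate inputs $\nu_1=0$ and $\mu_2=\delta_0$ in \eqref{eq:infinitesimal_convolution} correctly as formal Laurent series — in particular verifying $F_{\delta_0}=\mathrm{id}$, $\omega_1=\mathrm{id}$, and that composition and differentiation of these series behave as the notation suggests; none of this is difficult.
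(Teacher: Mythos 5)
Your proposal is correct and follows essentially the same route as the paper's proof: reduce to infinitesimal freeness of $a$ and $f$ via Proposition \ref{cor:equivalence}, identify their infinitesimal distributions as $(\mu,0)$ and $(\delta_0,\nu)$, and specialize \eqref{eq:infinitesimal_convolution} with $\omega_1=\mathrm{id}$ and $\omega_2=F_\mu$. The paper states these steps more tersely; your additional bookkeeping (e.g.\ verifying $(0,f)^n=(0,f^n)$ and $\mu\boxplus\delta_0=\mu$) is all sound.
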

\begin{remark}
Formula \eqref{eq:cm_convolution} is essentially the same as \cite[Theorem 7.6]{AHL} in which cyclic-monotone convolution is defined without subtracting the main term.  Note that although our definition of cyclic-(anti)monotone independence is different from that of  \cite{AHL} as mentioned in Remark \ref{rem:cm}, the difference does not appear in calculations of convolution as easily seen from the proofs.      
\end{remark}
\begin{proof}
From the assumption of cyclic-antimonotone independence, $b_1:=a$ and $b_2:=f$ are infinitesimally free in $(\B,\phi,\phi')$. Combining the facts that $b_1,b_2$ have the infinitesimal distributions $(\mu, 0),(\delta_0, \nu)$, respectively, and that $\omega_1(z) = z$ and $\omega_2(z) = F_\mu(z)$ we can deduce from \eqref{eq:infinitesimal_convolution} that 
\[
G_{\clhd{\mu}{\nu}} (z) = 0 + G_\nu(\omega_2(z))\omega_2'(z) = G_\nu(F_\mu(z)) F_\mu'(z)
\]
as desired. 
\end{proof}

\begin{corollary}\label{cor:B'free0}
 Assume that $(\{a_1,a_2\},\{f_1,f_2\})$ is B${}^\prime$-free. Let $b_i = a_i + f_i \in \B, i=1,2$. If $a_i$ has distribution $\mu_i$ with respect to $\phi$ and $f_i$ has distribution $\nu_i$ with respect to $\phi'$ for $i=1,2$ then the infinitesimal distribution of $b_1+b_2$ with respect to $(\phi,\phi')$ is $(\mu_1 \boxplus \mu_2,\nu)$, where $\nu$  is characterized by 
 \[
 G_\nu(z) = G_{\nu_1 + \nu_2}(F_{\mu_1 \boxplus \mu_2}(z)) F_{\mu_1 \boxplus \mu_2}'(z). 
 \]
\end{corollary}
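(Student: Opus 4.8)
The plan is to reduce the statement to a combination of Corollary \ref{cor:cm} and the infinitesimal free convolution formula \eqref{eq:infinitesimal_convolution}. Since $(\{a_1,a_2\},\{f_1,f_2\})$ is B${}^\prime$-free, by Theorem \ref{thm:B'free} (or directly by unwinding Definition \ref{def:typeBindependence} together with Proposition \ref{cor:equivalence}) the elements $b_1 = a_1+f_1$ and $b_2 = a_2+f_2$ are infinitesimally free in $(\B,\phi,\phi')$. Hence the infinitesimal distribution of $b_1+b_2$ is the infinitesimal free convolution $(\mu_1,\mu_1')\boxplus(\mu_2,\mu_2')$ of the infinitesimal distributions $(\mu_i,\mu_i')$ of the $b_i$, so the $\phi$-part is $\mu_1\boxplus\mu_2$ and the $\phi'$-part is $\ifconv{\mu_1'}{\mu_1}{\mu_2}{\mu_2'}$.

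The next step is to identify the infinitesimal distribution $(\mu_i,\mu_i')$ of each $b_i$. Here I would invoke Corollary \ref{cor:cm}: within each pair, $a_i$ and $f_i$ are cyclic-antimonotone independent (this is part of weak B${}^\prime$-freeness, which B${}^\prime$-freeness includes), $a_i$ has $\phi$-distribution $\mu_i$, and $f_i$ has $\phi'$-distribution $\nu_i$. Therefore the infinitesimal distribution of $b_i$ is $(\mu_i,\clhd{\mu_i}{\nu_i})$, i.e.\ $\mu_i' = \clhd{\mu_i}{\nu_i} = \ifconv{0}{\mu_i}{\delta_0}{\nu_i}$. Equivalently, I can simply record that $b_i$ has the same infinitesimal distribution as $a_i + \tilde f_i$ where $a_i$ has distribution $(\mu_i,0)$ and $\tilde f_i$ has distribution $(\delta_0,\nu_i)$, and that the four-fold object $(a_1,\tilde f_1,a_2,\tilde f_2)$ can be arranged to be infinitesimally free.

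The cleanest route, which I would adopt to avoid bookkeeping with subordination functions of composite variables, is to treat $b_1+b_2$ as the infinitesimally free sum of \emph{four} pieces $a_1, f_1, a_2, f_2$ with infinitesimal distributions $(\mu_1,0)$, $(\delta_0,\nu_1)$, $(\mu_2,0)$, $(\delta_0,\nu_2)$. Indeed B${}^\prime$-freeness gives: $a_1,a_2$ free; $(f_1),(f_2)$ trivially independent; and the pair $(\AAlg{a_1,a_2},\AAlg{f_1,f_2})$ cyclic-antimonotone; by Proposition \ref{cor:equivalence} and the combinatorial characterization of infinitesimal freeness (vanishing of mixed infinitesimal cumulants, as recalled around \eqref{eq:multiplicativity_kappa}), these four subalgebras are infinitesimally free. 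Trivial independence of $f_1,f_2$ means $\nu$ the $\phi'$-distribution of $f_1+f_2$ is the Boolean-type "trivial" sum, but since each $f_i$ has $\phi$-distribution $\delta_0$, at the level of infinitesimal distributions the sum $f_1+f_2$ has infinitesimal distribution $(\delta_0,\nu_1+\nu_2)$ — because $\phi'(( f_1+f_2)^n)$ only picks up the single-block contributions, giving additivity $\nu_1'+\nu_2' $ with $\nu_i'=\nu_i$. Then applying \eqref{eq:infinitesimal_convolution} to the sum of $(\mu_1\boxplus\mu_2)$-type main part (with $\phi'$-component $0$) and the $(\delta_0,\nu_1+\nu_2)$ part, for which the subordination functions are $\omega_1(z)=z$ and $\omega_2(z)=F_{\mu_1\boxplus\mu_2}(z)$, yields exactly
\[
G_\nu(z) = 0 + G_{\nu_1+\nu_2}\!\bigl(F_{\mu_1\boxplus\mu_2}(z)\bigr)\,F_{\mu_1\boxplus\mu_2}'(z),
\]
as claimed. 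The main obstacle is the middle step: carefully justifying that the four subalgebras $\A_1',\A_2',\F_1',\F_2'$ (with the $\C1_\A$ adjustments as in Proposition \ref{cor:equivalence}) are jointly infinitesimally free, rather than just pairwise, from the three defining conditions of B${}^\prime$-freeness; this is where one must either cite Theorem \ref{thm:B'free} directly or redo the mixed-cumulant vanishing argument. Everything after that is a routine substitution into \eqref{eq:infinitesimal_convolution} using $\omega_2 = F_{\mu_1\boxplus\mu_2}$, as in the proof of Corollary \ref{cor:cm}.
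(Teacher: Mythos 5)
Your adopted route --- regrouping $b_1+b_2=(a_1+a_2)+(f_1+f_2)$, using freeness and trivial independence to get the distributions $\mu_1\boxplus\mu_2$ and $\nu_1+\nu_2$ of the two parts, and then applying the cyclic-antimonotone convolution formula (equivalently, \eqref{eq:infinitesimal_convolution} with $\omega_1(z)=z$, $\omega_2=F_{\mu_1\boxplus\mu_2}$) --- is exactly the paper's proof, which simply cites Corollary \ref{cor:cm} together with the additivity of the $\phi'$-distribution of $f_1+f_2$ under trivial independence. The ``main obstacle'' you flag is not actually needed: condition \ref{item:CM} of weak B${}^\prime$-freeness already gives cyclic-antimonotone independence of the pair $(\langle a_1,a_2\rangle,\langle f_1,f_2\rangle)$ verbatim, so Corollary \ref{cor:cm} applies directly to $(a_1+a_2,\,f_1+f_2)$ without any appeal to joint infinitesimal freeness of the four subalgebras.
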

\begin{proof}
This follows from Corollary \ref{cor:cm} and the fact that, by the trivial independence, the distribution of $f_1+f_2$ with respect to $\phi'$ is $\nu_1+\nu_2$. 
\end{proof}

(Weak) B${}^\prime$-freeness and infinitesimal freeness are also closely related as follows. 

\begin{theorem}\label{thm:B'free}
 Let $(\A_i)_{i\in I}$ be subalgebras of $\A$ containing $1_\A$ and $(\F_j)_{j\in J}$ be subalgebras of $\F$. Let $\A_i':=\A_i \oplus \{0_\F\} \subseteq \B$ and $\F_j':= (\C 1_\A)\oplus \F_j \subseteq \B$.  
 The following are equivalent. 
 
  \begin{enumerate}[label=\rm(\arabic*),leftmargin=1cm] 
   \item\label{item:thm:B'free1} $((\A_i)_{i\in I},(\F_j)_{j\in J})$ is B${}^\prime$-free in $(\A, \varphi,\F, \Phi)$;  
   

  \item\label{item:thm:B'free3} the subalgebras $\A_i'~(i\in I),\F_j'~(j \in J)$ are infinitesimally free in $(\B,\phi,\phi')$. 
 
 \end{enumerate}
Moreover, let $\F':=\C1_\A \oplus \AAlg{\F_j: j \in J}=\AAlg{\F_j': j \in J}$. The following two conditions are equivalent. 
 \begin{enumerate}[label=\rm(\roman*),leftmargin=1cm] 
   \item\label{item:thm:wB'free1} $((\A_i)_{i\in I},(\F_j)_{j\in J})$ is weakly B${}^\prime$-free in $(\A, \varphi,\F, \Phi)$;  
   
  \item\label{item:thm:wB'free2} the subalgebras $\A_i'~(i \in I), \F' $  are infinitesimally free. 
 \end{enumerate}
    
\end{theorem}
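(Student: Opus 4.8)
The plan is to derive both equivalences from Proposition~\ref{cor:equivalence}, using the associativity of infinitesimal freeness. First I would set up the bookkeeping inside $\B = \A \oplus \F$. By the multiplication \eqref{eq:multiplication_b'}, the product of $(a,0_\F)$ and $(a',0_\F)$ equals $(aa',0_\F)$, and the product of two elements of $\C 1_\A \oplus \F_j$ stays in $\C 1_\A \oplus \F_j$; hence $\langle \A_i' : i\in I\rangle = \A_1 \oplus \{0_\F\}$ and $\langle \F_j' : j\in J\rangle = \C 1_\A \oplus \F_1 = \F'$, where $\A_1 := \langle \A_i : i\in I\rangle$ and $\F_1 := \langle \F_j : j\in J\rangle$. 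All of these subalgebras contain $1_\A = 1_\B$, so Definition~\ref{def:infinitesimal_free} and Proposition~\ref{cor:equivalence} apply to them. I would also record two facts used repeatedly: any product of elements of $\bigcup_i \A_i'$ has zero $\F$-component; and the $\phi$-centered elements of $\F_j'$ are exactly the copies of $\F_j$ (elements with zero main term), so that any product of them has zero main component. Consequently $\phi'$ vanishes on such products, and likewise all the pairings $\phi(b_k b_{n+1-k})$ occurring in \eqref{eq:infinitesimal_free} vanish.

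With this in hand, the following two isolated equivalences are routine. (a) $(\A_i')_{i\in I}$ is infinitesimally free in $(\B,\phi,\phi')$ if and only if $(\A_i)_{i\in I}$ is free in $(\A,\phi)$: the $\phi$-part of \eqref{eq:infinitesimal_free} is exactly freeness, while the $\phi'$-part holds automatically because both sides are $0$ (using $\phi'(b_{(n+1)/2})=0$ for the exceptional term). (b) $(\F_j')_{j\in J}$ is infinitesimally free in $(\B,\phi,\phi')$ if and only if $(\F_j)_{j\in J}$ is trivially independent with respect to $\Phi$: for an alternating $\phi$-centered word in the $\F_j'$, the $\phi$-part of \eqref{eq:infinitesimal_free} is automatic and the $\phi'$-part collapses, by the vanishing noted above, to the single requirement $\Phi(f_1\cdots f_n)=0$ for all alternating words of length $\ge 2$, i.e.\ to (T).

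The crucial ingredient is the associativity of infinitesimal freeness in the following form: for unital subalgebras and a splitting of the index set into two parts, the whole family is infinitesimally free if and only if each of the two sub-families is infinitesimally free and the pair of generated subalgebras is infinitesimally free. This transfers from ordinary freeness because, by F\'evrier--Nica~\cite{FN}, infinitesimal freeness is equivalent to the vanishing of all mixed $\G$-valued free cumulants $\tildekappa$, and the standard cumulant proof of associativity of freeness (via the product formula for cumulants, the $\G$-valued analogue of \cite[Theorem~14.4]{NS}) carries over verbatim over the commutative ring $\G$. Granting this, the theorem follows. Splitting the index set of \ref{item:thm:wB'free2} into the block $I$ and the singleton block of $\F'$, condition \ref{item:thm:wB'free2} becomes equivalent to ``$(\A_i')_{i\in I}$ infinitesimally free'' together with ``$(\A_1\oplus\{0_\F\},\F')$ infinitesimally free'', which by (a) and Proposition~\ref{cor:equivalence} (applied to $\A_1,\F_1$) is exactly (F) and (CM), i.e.\ \ref{item:thm:wB'free1}. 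Splitting the index set of \ref{item:thm:B'free3} into $I$ and $J$, condition \ref{item:thm:B'free3} becomes equivalent to ``$(\F_j')_{j\in J}$ infinitesimally free'', ``$(\A_i')_{i\in I}$ infinitesimally free'', and ``$(\F',\A_1\oplus\{0_\F\})$ infinitesimally free'', which by (a), (b) and Proposition~\ref{cor:equivalence} is (T), (F) and (CM) together, i.e.\ \ref{item:thm:B'free1}.

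The main obstacle I anticipate is making the associativity step rigorous: one must check that the $\C$-algebras involved become unital $\G$-subalgebras after base change to $\B\otimes_\C\G$ (this uses that each contains $1_\A$) and that the $\phi$-centered formulation of infinitesimal freeness in Definition~\ref{def:infinitesimal_free} really matches the $\tildephi$-centered, $\G$-valued one; this is essentially the content of \cite{FN}, but deserves a careful statement. The rest reduces to the elementary computations in $\B$ described above.
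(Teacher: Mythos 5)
Your proposal is correct and follows essentially the same route as the paper: reduce (F), (T) and (CM) separately to infinitesimal freeness of $(\A_i')_{i\in I}$, $(\F_j')_{j\in J}$ and of the pair $(\langle\A_i':i\in I\rangle,\F')$ via Proposition~\ref{cor:equivalence}, then assemble everything with the associativity of infinitesimal freeness. The only difference is that you spell out the elementary verifications and the $\G$-valued-cumulant justification of associativity, which the paper leaves implicit.
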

\begin{remark}
    Theorem \ref{thm:B'free} generalizes a known relation between cyclic-(anti)monotone independence and infinitesimal freeness in \cite{S}, see also \cite[Proposition 3.10]{CHS}.  
\end{remark}

\begin{proof}
  Note first that freeness of $(\A_i)_{i\in I}$ in $(\A, \phi)$ and  trivial independence of $(\F_j)_{j\in J}$ in $(\F, \Phi)$ are equivalent to infinitesimal freeness of $(\A_i')_{i\in I}$ and of $ (\F_j')_{j\in J}$  in $ (\B, \phi, \phi')$, respectively.
  By Proposition \ref{cor:equivalence}, condition \ref{item:CM} is equivalent to the infinitesimal freeness of $\A'$ and $\F'$, where 
  $ 
  \A':= \AAlg{\A_i: i \in I}\oplus \{0_\F\} = \AAlg{\A_i': i \in I}.$ 
  Combining these facts and the associativity of infinitesimal freeness, we conclude the equivalence of \ref{item:thm:B'free1} and \ref{item:thm:B'free3}. The equivalence of \ref{item:thm:wB'free1} and \ref{item:thm:wB'free2} is proved in a similar manner. 
\end{proof}

\begin{corollary}\label{cor:B'free}
In the setting of Theorem \ref{thm:B'free}, we assume that $I=J$. Let $\B_i$ be the subalgebra of $\B$ generated by $\A_i'$ and $\F_i'$. If $(\A_i,\F_i)_{i\in I}$ is B${}^\prime$-free in $(\A, \varphi,\F, \Phi)$ then $(\B_i)_{i\in I}$ are infinitesimally free in $(\B,\phi,\phi')$.  
\end{corollary}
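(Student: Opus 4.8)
The plan is to reduce the statement to Theorem \ref{thm:B'free}\ref{item:thm:B'free3} together with the associativity of infinitesimal freeness. First I would observe that for each $i\in I$ the subalgebra $\B_i$ generated by $\A_i'$ and $\F_i'$ is precisely the image in $\B$ of $\Alg{\A_i}{\F_i}$ under the natural embedding; indeed, inside $\B=\A\oplus\F$ one has $\A_i'=\A_i\oplus\{0_\F\}$ and $\F_i'=\C 1_\A\oplus\F_i$, and their algebraic span under the type B$'$ multiplication \eqref{eq:multiplication_b'} is exactly $\A_i\oplus\F_i$ (one must only check that products of elements of $\F_i$ stay in $\F_i$, which holds since $\F_i$ is assumed to be a subalgebra of $\F$). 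In particular $\B_i$ contains $1_\A=1_\B$, so the $\B_i$ are unital subalgebras of $\B$ and it makes sense to ask whether they are infinitesimally free.

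Next I would invoke Theorem \ref{thm:B'free}: since $(\A_i,\F_i)_{i\in I}$ is B$'$-free, equivalence of \ref{item:thm:B'free1} and \ref{item:thm:B'free3} shows that the family consisting of all the $\A_i'$ $(i\in I)$ together with all the $\F_j'$ $(j\in I)$ is infinitesimally free in $(\B,\phi,\phi')$. The goal is then to "merge," for each fixed $i$, the two subalgebras $\A_i'$ and $\F_i'$ into the single subalgebra $\B_i=\AAlg{\A_i',\F_i'}$ while preserving infinitesimal freeness of the resulting coarser family $(\B_i)_{i\in I}$. This is exactly an instance of the associativity (grouping) property of infinitesimal freeness: if a family of subalgebras is infinitesimally free, then so is any family obtained by partitioning the index set into blocks and replacing each block by the subalgebra it generates. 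This associativity is used implicitly in the proof of Theorem \ref{thm:B'free} itself ("the associativity of infinitesimal freeness"), so I would cite it in the same way — either referring to \cite{FN} or noting that it follows from the analogous associativity for the $\G$-valued ncps $(\B,\tildephi)$ via the standard theory of free cumulants, since a $\G$-valued family is free exactly when mixed $\G$-valued free cumulants vanish, and the vanishing of mixed cumulants is manifestly stable under grouping.

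The only genuinely non-routine point is the identification $\B_i=\A_i\oplus\F_i$ and the verification that this is closed under the type B$'$ product, but this is immediate from the hypothesis that $\F_i$ is a subalgebra of $\F$ and from \eqref{eq:multiplication_b'}; there is no real obstacle. So the proof is essentially: (1) identify $\B_i$ with the obvious embedded copy of $\Alg{\A_i}{\F_i}$; (2) apply Theorem \ref{thm:B'free} to get infinitesimal freeness of the refined family $\{\A_i'\}_{i}\cup\{\F_j'\}_{j}$; (3) group the blocks $\{\A_i',\F_i'\}$ and invoke associativity of infinitesimal freeness to conclude infinitesimal freeness of $(\B_i)_{i\in I}$. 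I expect step (3) — pinning down the precise form of the associativity statement and its reference — to be the part most worth stating carefully, though it is standard.
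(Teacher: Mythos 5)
Your proposal follows exactly the route the paper intends: the corollary is stated without proof as an immediate consequence of Theorem \ref{thm:B'free} (equivalence of B${}^\prime$-freeness with infinitesimal freeness of the refined family $\{\A_i'\}_{i}\cup\{\F_j'\}_{j}$) together with the associativity/grouping property of infinitesimal freeness, the same associativity already invoked in the proof of that theorem. One small correction: since $\F_i$ is \emph{not} assumed closed under the bilateral action of $\A_i$ (cf.\ Definition \ref{def:typeB'} and Remark \ref{rem:cm}\,\ref{rem:item:trivial}), the product $(a,0)(1_\A,f)=(a,af)$ shows that $\A_i\oplus\F_i$ need not be a subalgebra of $\B$; the subalgebra generated by $\A_i'$ and $\F_i'$ is $\Alg{\A_i}{\F_i}=\A_i\oplus(\nuAlg{\A_i}{\F_i})$. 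This slip is harmless, because the grouping property applies to whatever subalgebra each block generates, but the identification should be stated as in Definition \ref{def:typeB'} rather than as $\A_i\oplus\F_i$.
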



Corollaries \ref{cor:B'free0} and \ref{cor:B'free} imply the interesting identity 
\begin{equation}\label{eq:convolution1}
    \ifconv{(\clhd{\mu_1}{\nu_1})}{\mu_1}{\mu_2}{(\clhd{\mu_2}{\nu_2})} =  \clhd{(\mu_1 \boxplus \mu_2)}{(\nu_1 + \nu_2)}.
\end{equation}
The right-hand side comes from the fact that $(a_1+a_2, f_1+f_2)$ is cyclic-antimonotone. On the other hand, the left expression comes from the facts that $(a_1,f_1)$ and $(a_2,f_2)$ are infinitesimally free and that the infinitesimal distribution of $(a_i,f_i)$ is $(\mu_i, \clhd{\mu_i}{\nu_i})$.

\section{Weak B${}^\prime$-freeness, Boolean independence and conditional freeness} \label{sec:cfree}

As usual, in this section, we keep the setting of Definition \ref{def:typeB_NCPS}: $(\A, \varphi,\F,\Phi)$ is a ncps of type B${}^\prime$ and $(\B = \A\,\langle\F\rangle, \varphi, \varphi')$ is the infinitesimal ncps associated with $(\A, \varphi,\F, \Phi)$. The following notion has already appeared in Corollary \ref{cor:B'free}.

\begin{definition}\label{def:typeB'}
A \emph{type B\,${}^\prime$ subalgebra} of $\B$ is  
the subalgebra, denoted as $\Alg{\A_1}{\F_1}$,  generated by a subspace $\A_1 \oplus \F_1\subseteq \B$, where $\A_1$ is a subalgebra of $\A$ containing $1_\A$ and a subalgebra $\F_1$ of $\F$. It has the form $\Alg{\A_1}{\F_1}=\A_1 \oplus (\nuAlg{\A_1}{\F_1})$, where $\nuAlg{\A_1}{\F_1} \subseteq\F$ is the $\A_1$-subalgebra generated by $\F_1$, i.e.,
\begin{equation*}
 \nuAlg{\A_1}{\F_1} := \mathrm{span}\{ a_0 f_1 a_1 \cdots a_{n-1} f_n a_n \mid n \in \N,\; a_0, \dots, a_n \in \A_1,\; f_1, \dots, f_n \in \F_1 \}.
\end{equation*}
Monomials $a_0 f_1 a_1 \cdots a_{n-1} f_n a_n \in \nuAlg{\A_1}{\F_1}$ are often denoted by capital letters, e.g.\ by $F$. 
Note that we do not require $\F_1$ to be closed with respect to the bilateral action of $\A_1$, cf.~Remark \ref{rem:cm} \ref{rem:item:trivial}. 
\end{definition}

\subsection{The main result and its consequences}

Connections between c-freeness and infinitesimal freeness were pointed out in \cite{BS} and \cite{CDG}.
In a similar view we find a connection between our framework and c-freeness which roughly implies: 
\begin{center}
``weak B${}^\prime$-freeness + Boolean independence $\Longrightarrow$ c-freeness''.  
\end{center}
This is to be contrasted with Corollary \ref{cor:B'free} which says: 
\begin{center}
``weak B${}^\prime$-freeness + trivial independence $\Longrightarrow$ infinitesimal freeness''. 
\end{center}

Let us introduce a new linear functional $\varphi_P$ induced from $P \in \F$ satisfying $\Phi(P)\neq 0$.

\begin{definition}
  For $P \in \F$ such that $\Phi(P)\neq 0$, we define a linear functional $\varphi_P : \B \to \C$ as follows:
  \[
    \varphi_P(b) := \frac{\Phi(Pb)}{\Phi(P)}, \qquad b \in \B.
  \]
\end{definition}
Clearly, $\varphi_P(1_\B) = 1$. 
It is easy to see that if $\A$ and $\F$ are $\ast$-algebras (the two $\ast$'s should be compatible with respect to the action of $\A$ on $\F$, i.e., $(afa')^* = (a')^* f^* a^*$), $\Phi$ is tracial and positive on $\F$, and $P=p^* p$ for some $p \in \F$ with $\Phi(P)>0$, then $\phi_P$ is a state. Of course this is the case for the random matrix model in Subsection \ref{sec:RM}.

\begin{theorem}\label{thm:c-free} Let $(\B_i=\Alg{\A_i}{\F_i})_{i\in I}$ be type B${}^\prime$ subalgebras of $\B$. Let $P$ be an element of $\F$ with $\Phi(P)\ne 0$  and let $\F_P :=\{cP^n \mid n\in\N, c\in\C\}$. Assume that 
$((\A_i)_{i \in I}, (\F_i)_{i\in I \sqcup \{P\}})$ is weakly B${}^\prime$-free.  
Then $(\B_i)_{i \in I}$ are c-free with respect to $(\phi, \phi_P)$ if and only if
  $(\F_i)_{i\in I}$ are Boolean independent with respect to $\varphi_P$.
\end{theorem}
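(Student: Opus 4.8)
The plan is to reduce the statement to a direct comparison of the defining recursions for c-freeness and for Boolean independence, using weak B${}^\prime$-freeness to compute $\varphi_P$ on alternating products of centered elements. Write $\widetilde{\varphi} = \varphi + \epsilon \varphi'$ as usual. The key observation is that for $b \in \B$ the functional $\varphi_P(b) = \Phi(Pb)/\Phi(P)$ is, up to the nonzero scalar $\Phi(P)$, exactly the evaluation of $\varphi'$ on the product $Pb$ after removing the main part; since $P \in \F$ and $\varphi'|_{\F} = \Phi$, weak B${}^\prime$-freeness gives us full control of these quantities via the cyclic-antimonotone rule \eqref{eq:cyclic} once the indices are arranged in alternating order. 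So the first step is to set up this dictionary and record that $\varphi_P(1_\B) = 1$ and $\varphi_P|_{\A_i}$, $\varphi_P|_{\F_i}$ are what they must be.

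First I would fix an alternating word $b_1 b_2 \cdots b_n$ with $b_k \in \B_{i_k}$, $i_1 \ne i_2 \ne \cdots \ne i_n$, and $\varphi(b_k) = 0$ for all $k$; decompose each $b_k = a_k + F_k$ with $a_k \in \mathring{\A}_{i_k}$ (so $\varphi(a_k)=0$) and $F_k \in \nuAlg{\A_{i_k}}{\F_{i_k}}$. Using weak B${}^\prime$-freeness, in particular condition \ref{item:CM}, $\phi_P(b_1 \cdots b_n) = \Phi(P b_1 \cdots b_n)/\Phi(P)$ expands into a sum of terms of the form \eqref{eq:cyclic}: only those monomials survive in which at least one perturbation factor appears (since $\Phi$ vanishes on the ``main'' part), and each surviving monomial is an alternating $\A$-word with $\F$-letters interspersed, so the cyclic-antimonotone formula applies with the cyclic factor coming from $P$ and the adjacent main letters. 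The centering $\varphi(a_k)=0$ kills most products $\prod \phi(a_i)$, forcing the surviving contributions to have consecutive $\F_{i_k}$-letters genuinely adjacent (up to trivial $1_\A$ spacers), i.e. the sum collapses to an expression built solely from the $\F$-parts arranged in the same cyclic/linear order. The upshot I expect is the clean identity
\begin{equation*}
  \varphi_P(b_1 b_2 \cdots b_n) = \frac{1}{\Phi(P)} \, \Phi\bigl(P \, G_1 G_2 \cdots G_n\bigr),
\end{equation*}
where $G_k \in \nuAlg{\A_{i_k}}{\F_{i_k}}$ is an appropriate ``reduction'' of $b_k$ to its perturbation content (and the right-hand side is interpreted as $0$ when some $b_k$ is purely main, i.e. $F_k$-free). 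In other words, on centered alternating words $\varphi_P$ depends only on $(\F_i)_{i\in I}$ and on $\varphi_P|_{\F}$. Then c-freeness of $(\B_i)_{i\in I}$ with respect to $(\varphi, \varphi_P)$ — which by definition already holds for the $\varphi$-part by weak B${}^\prime$-freeness, so only the $\varphi_P$-multiplicativity \rm(CF) is at stake — becomes equivalent to the factorization $\varphi_P(G_1 \cdots G_n) = \prod_k \varphi_P(G_k)$ for alternating words in the $\F_i$'s, which is precisely Boolean independence of $(\F_i)_{i\in I}$ with respect to $\varphi_P$. Conversely, Boolean independence of the $\F_i$'s plugged into the displayed formula yields \rm(CF). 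Both directions then follow by a bookkeeping of which terms survive the centering.

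The main obstacle I anticipate is the combinatorial accounting in the expansion of $\Phi(P b_1 \cdots b_n)$: each $b_k = a_k + F_k$ with $F_k$ itself a sum of monomials $a_0^{(k)} f_1^{(k)} \cdots f_{m_k}^{(k)} a_{m_k}^{(k)}$, so the full product is a large sum of alternating $\A$-words decorated with $\F$-letters, and one must verify carefully that after applying \eqref{eq:cyclic} and imposing $\varphi(a_k)=0$ the only surviving terms are exactly those contributing to $\Phi(P G_1 \cdots G_n)$, with the correct identification of the $G_k$. This is essentially the same mechanism as in the proof of Theorem \ref{thm:B'free} / Proposition \ref{cor:equivalence} (vanishing of mixed infinitesimal cumulants, or equivalently the moment-cumulant formula \eqref{moment-cumulant_tilde}), and I would lean on that machinery rather than a bare-hands expansion: reinterpret $\varphi_P$ on $\B$ as built from $\varphi'$ and invoke the infinitesimal-freeness picture from Theorem \ref{thm:B'free} to organize the cancellations. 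The edge cases — a single-letter word $n=1$, words where some $b_k \in \A_{i_k}$ is purely main, and the role of the $1_\A$ spacers in $\nuAlg{\A_i}{\F_i}$ — need to be handled explicitly but are routine once the main identity is in place.
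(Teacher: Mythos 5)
Your proposal is correct and follows essentially the same route as the paper's second (computational) proof: your ``clean identity'' is exactly the paper's Lemma \ref{lem:c-free}, namely $\varphi_P(b_1b_2\cdots b_n)=\varphi_P(F_1F_2\cdots F_n)$ for centered alternating words, and the remainder is the monomial bookkeeping you describe. One point to tighten: since each $G_k=F_k$ lives in $\nuAlg{\A_{i_k}}{\F_{i_k}}$ rather than in $\F_{i_k}$, the factorization $\varphi_P(F_1\cdots F_n)=\prod_k\varphi_P(F_k)$ is Boolean independence of the larger algebras $(\nuAlg{\A_i}{\F_i})_{i\in I}$, not ``precisely'' that of $(\F_i)_{i\in I}$; the equivalence of the two is itself the second half of the paper's argument (write each $F_k$ as a monomial $a^{(k)}_0f^{(k)}_1\cdots f^{(k)}_{n_k}a^{(k)}_{n_k}$, apply \eqref{eq:cyclic} to extract the scalars $\varphi(a^{(k)}_l)$, and use freeness of the $\A_i$ to split $\phi$ across the boundary products such as $a^{(k)}_{n_k}a^{(k+1)}_0$), so it should be carried out rather than asserted. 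The paper also gives a shorter conceptual proof, via the associativity of c-freeness together with the observation that cyclic-antimonotone independence of $(\A_1,\F_1)$ with $P\in\F_1$ yields antimonotone independence of $(\A_1',\F_1')$ with respect to $\varphi_P$ (hence their c-freeness), which avoids the moment expansion entirely.
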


Theorem \ref{thm:c-free} has the following immediate consequence for the sum. 

\begin{corollary} \label{prop:sum_of_typeBindep}
  Let $b_j = a_j + f_j \in \B=\A\oplus\F \; (j=1,2)$ and $P \in \F$ with $\Phi(P)\ne 0$. Assume that $a_1$ and $a_2$ are free with respect to $\varphi$, the pair $(\langle a_1, a_2\rangle , \langle P, f_1, f_2\rangle)$ is cyclic-antimonotone independent, and $f_1$ and $f_2$ are Boolean independent with respect to $\varphi_P$.
 Let $\mu_j$ be the distribution of $a_j$ with respect to $\varphi$ and also $\nu_j$ be the distribution of $f_j$ with respect to $\varphi_P$ for $j= 1,2$. Then $b_1+b_2$ has the distributions $\mu_1 \boxplus \mu_2$ with respect to $\varphi$ and 
 $\cfconv{(\mu_1\lhd \nu_1)}{\mu_1}{\mu_2}{(\mu_2\lhd \nu_2)}$ with respect to $\phi_P$. 
\end{corollary}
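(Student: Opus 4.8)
The plan is to obtain this as a direct application of Theorem~\ref{thm:c-free}, after translating the hypotheses and identifying the distributions that occur. First I would set $\A_j := \langle 1_\A, a_j\rangle \subseteq \A$, $\F_j := \langle f_j\rangle \subseteq \F$, $\B_j := \Alg{\A_j}{\F_j}$ for $j=1,2$, and $\F_P := \{cP^n : n \in \N,\, c\in\C\}$. Since $\langle\A_1,\A_2\rangle = \langle a_1, a_2\rangle$ and $\langle\F_1,\F_2,\F_P\rangle \subseteq \langle P, f_1, f_2\rangle$, the assumptions that $a_1,a_2$ are free with respect to $\varphi$ and that $(\langle a_1,a_2\rangle, \langle P, f_1, f_2\rangle)$ is cyclic-antimonotone independent amount precisely to saying that $((\A_1,\A_2),(\F_1,\F_2,\F_P))$ is weakly B${}^\prime$-free, while the Boolean independence of $f_1, f_2$ with respect to $\varphi_P$ is the Boolean independence of $\F_1, \F_2$ with respect to $\varphi_P$. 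Thus Theorem~\ref{thm:c-free} applies and yields that $\B_1$ and $\B_2$ are c-free with respect to $(\varphi, \varphi_P)$.

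The next step is to read off the distributions. Using that the type B${}^\prime$ multiplication \eqref{eq:multiplication_b'} keeps $\A\cdot\A\subseteq\A$ and maps all remaining products into $\F$, while $\varphi$ annihilates $\F$, one gets $\varphi(b_j^n) = \varphi(a_j^n)$ for all $n$, so the distribution of $b_j$ with respect to $\varphi$ is $\mu_j$; since c-freeness entails freeness with respect to $\varphi$, the distribution of $b_1+b_2$ with respect to $\varphi$ is $\mu_1 \boxplus \mu_2$, which is the first assertion. By the very definition of c-free convolution, the distribution of $b_1+b_2$ with respect to $\varphi_P$ is then $\cfconv{\rho_1}{\mu_1}{\mu_2}{\rho_2}$, where $\rho_j$ denotes the distribution of $b_j$ with respect to $\varphi_P$, so the proof reduces to showing $\rho_j = \mu_j \lhd \nu_j$.

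For this last point I would show that $(\A_j, \F_j)$ is antimonotone independent with respect to $\varphi_P$. The $n=1$ instance of \eqref{eq:cyclic} gives $\Phi(Pa) = \varphi(a)\Phi(P)$ for $a \in \langle a_1, a_2\rangle$, hence $\varphi_P$ agrees with $\varphi$ on $\A_j$, so in particular the distribution of $a_j$ with respect to $\varphi_P$ is again $\mu_j$. For a monomial $w = a_j^{k_0} f_j a_j^{k_1} \cdots f_j a_j^{k_m}$ in $\B_j$, writing $\varphi_P(w) = \Phi(Pw)/\Phi(P)$ and applying \eqref{eq:cyclic} to the product $P\cdot a_j^{k_0} f_j a_j^{k_1}\cdots f_j a_j^{k_m}$ (whose $\F$-factors are $P$ followed by $m$ copies of $f_j$) should yield
\[
\varphi_P(w) = \Bigl[\prod_{i=0}^m \varphi(a_j^{k_i})\Bigr]\frac{\Phi(Pf_j^m)}{\Phi(P)} = \varphi_P(f_j^m)\prod_{i=0}^m \varphi_P(a_j^{k_i}),
\]
which by multilinearity is exactly the factorization rule making $(\F_j, \A_j)$ monotone, equivalently $(\A_j, \F_j)$ antimonotone, with respect to $\varphi_P$. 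Combined with the previous identifications of the distributions of $a_j$ and $f_j$ with respect to $\varphi_P$, this gives $\rho_j = \mu_j \lhd \nu_j$, and substituting into the c-free convolution above finishes the proof. The step I expect to require the most care is precisely this computation: correctly matching $P\cdot a_j^{k_0} f_j a_j^{k_1}\cdots$ against the pattern in \eqref{eq:cyclic} (in particular the role of the leading factor $1_\A$ and of $P$ as one of the $\F$-entries), and verifying that $\varphi_P$ and $\varphi$ coincide on $\A_j$ so that the identity above is a genuine monotone factorization rather than a twisted one.
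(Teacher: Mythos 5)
Your proposal is correct and follows essentially the same route as the paper: apply Theorem \ref{thm:c-free} to get c-freeness of $\B_1,\B_2$ with respect to $(\varphi,\varphi_P)$, identify the $\varphi$-distribution of $b_1+b_2$ as $\mu_1\boxplus\mu_2$, and reduce the remaining claim to showing that the $\varphi_P$-distribution of $b_j$ is $\mu_j\lhd\nu_j$ via antimonotone independence of $(\A_j,\F_j)$ with respect to $\varphi_P$. The only cosmetic difference is that you verify this antimonotone independence by a direct computation with \eqref{eq:cyclic}, whereas the paper cites Observation \ref{Prop:cfreeforAF}\,\ref{obs3}; your computation is a correct inlining of that observation.
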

\begin{proof}
 Let $\lambda_i$ be the distribution of $b_i$ with respect to $\varphi_P$.
 Theorem \ref{thm:c-free} implies that $b_1$ and $b_2$ are conditionally free with respect to $(\phi,\phi_P)$, so that the distribution of $b_1+ b_2$ with respect to $\phi$ is the free convolution $\mu_1 \boxplus \mu_2$ and the distribution with respect to $\phi_P$ is $\cfconv{\lambda_1}{\mu_1}{\mu_2}{\lambda_2}$. It remains to show $\lambda_i = \mu_i \lhd \nu_i$.
 Because $(a_i,f_i)$ is cyclic-antimonotone independent with respect to $(\phi,\Phi)$, it is antimonotonically independent with respect to $\phi_P$ by Proposition \ref{Prop:cfreeforAF}; therefore the distribution of $b_i = a_i+ f_i$ is the monotone convolution of $\nu_i$ and $\mu_i$ (one can check easily that the distributions of $a_i$ with respect to $\phi_P$ and $\phi$ are the same). 
\end{proof}

According to \eqref{eq:c-free_convolution} the following formula holds: 
\begin{equation} \label{eq:c-free_convolution2}
F_{\cfconv{(\mu_1\lhd \nu_1)}{\mu_1\,\,}{\mu_2}{(\mu_2\lhd \nu_2)}}  = F_{\nu_1} \circ F_{\mu_1 \boxplus \mu_2} + F_{\nu_2} \circ F_{\mu_1 \boxplus \mu_2} - F_{\mu_1 \boxplus \mu_2}.  
\end{equation}
Combining \eqref{eq:Boolean_convolution}, \eqref{eq:monotone_convolution} and  \eqref{eq:c-free_convolution2} 
we can get another expression: 
\begin{equation}\label{eq:convolution2}
    \cfconv{(\mu_1\lhd \nu_1)}{\mu_1}{\mu_2}{(\mu_2\lhd \nu_2)} =  (\mu_1 \boxplus \mu_2) \lhd (\nu_1 \uplus \nu_2).
\end{equation}
The latter expression can also be understood from $b_1 + b_2 = (a_1 + a_2, f_1 + f_2)$ and the following facts: $a_1+a_2$ has the distribution $\mu_1 \boxplus \mu_2$ with respect to $\phi$,  $f_1+f_2$ has the distribution $\nu_1 \uplus \nu_2$ with respect to $\phi_P$, and the fact that $(a_1+a_2, b_1 + b_2)$ is cyclic-antimonotone independent with respect to $(\phi, \Phi)$ and hence is monotonically independent with respect to $\phi_P$. Finally, it is worth noting the resemblance of \eqref{eq:convolution1} and \eqref{eq:convolution2}.

\begin{example}
  \begin{enumerate}
    \item Let $f_i$ be trace class operators on a Hilbert space $H$ and $\Phi := \Tr_H$ be the canonical trace. Let $P$ be a rank one projection satisfying $f_j P = \lambda_j P, \lambda_j \in \C$ for $j = 1,2$, that is, $P$ is a rank one projection onto a simultaneous eigenvector space for $f_1$ and $f_2$. 
          Then $f_1$ and $f_2$ are Boolean (also tensor) independent with respect to $\varphi_P$. 
   \item A canonical model for Boolean independence is as follows: take Hilbert spaces $H_i$ with unit vector $\xi_i$, $i=1,2$, and take $T_i \in B(H_i)$ and define $f_1 := T_1 \otimes P_2$ and $f_2:=P_1 \otimes T_2$, where $P_i$ is the orthogonal projection $H_i\to \C\xi_i$. Then $f_1,f_2 \in B(H_1\otimes H_2)$ are Boolean independent with respect to the vacuum state corresponding to $\xi_1 \otimes \xi_2$. 
    \item If $\F_1$ and $\F_2$ are trivially independent and $P \in \F_1$, then $\F_1$ and $\F_2$ are Boolean independent. 
  \end{enumerate}
\end{example}

\subsection{Proof of Theorem \ref{thm:c-free}}
\label{subsec:CFI}

We prove Theorem \ref{thm:c-free} in two ways. The first proof is conceptual and it mainly depends on the associativity of c-freeness and the following observations.
We thank the anonymous referee for suggesting this proof.

\begin{observation} \label{Prop:cfreeforAF} For a subalgebra $\A_1 \subset\A$ that contains $1_\A$ and a subalgebra $\F_1\subset \F$ we use the notation $\A_1' := \A_1\oplus\{0_\F\} \subset \B$ and $\F_1':= (\C 1_\A)\oplus \F_1 \subset \B$ below. 
\begin{enumerate}[label=(\roman*), leftmargin=1cm]
    \item\label{obs1} Let $(\A_i)_{i\in I}$ be subalgebras of $\A$ containing $1_\A$. If $P$ is cyclic-antimonotone independent from $\langle \A_i\mid i \in I \rangle$, then $\varphi_P = \varphi$ on $\langle \A_i'\mid i\in I \rangle$. Hence, in such a situation, 
the subalgebras $(\A_i)_{i\in I}$ are free in $(\A, \phi)$ if and only if $(\A_i')_{i\in I}$ are c-free in $(\B, \varphi, \varphi_P)$.

\item\label{obs2} Let $(\F_i)_{i\in I}$ be subalgebras of $\F$. The c-freeness of $(\F_i')_{i\in I}$ with respect to $(\varphi, \varphi_P)$ is equivalent to the  Boolean independence of $(\F_i)_{i\in I}$ with respect to $\varphi_P$ because $\varphi = 0$ on $\F$ by definition.

\item\label{obs3}  Let $\A_1 \subset \A$ and $\F_1 \subset \F$ be subalgebras.
      If $(\A_1, \F_1)$ is cyclic-antimonotone independent and $P\in\F_1$ (with $\Phi(P)\neq 0$)  with respect to $(\varphi, \Phi)$ then $(\A_1', \F_1')$ is antimonotone independent with respect to $\varphi_P$, which is equivalent to that $\A_1'$ and $\F_1'$ are c-free with respect to $(\varphi, \varphi_P)$.
\end{enumerate}
    
\end{observation}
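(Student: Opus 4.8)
The plan is to establish the three items of Observation~\ref{Prop:cfreeforAF} by direct computation inside $\B=\A\oplus\F$; items \ref{obs1} and \ref{obs2} are almost tautological once the definitions are unpacked, while \ref{obs3} rests on a short case analysis fed by the defining relation \eqref{eq:cyclic}. I will use repeatedly the following elementary consequences of the multiplication \eqref{eq:multiplication_b'}: a product of elements of $\B$ that contains at least one factor from $\F$ lies in $\{0_\A\}\oplus\F$; $\varphi$ annihilates $\{0_\A\}\oplus\F$; and for $b=(0_\A,F)$ with $F\in\F$ one has $Pb=(0_\A,PF)$, so $\varphi_P(b)=\Phi(PF)/\Phi(P)$.

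For \ref{obs1}: every element of $\langle\A_i':i\in I\rangle$ is of the form $(a,0_\F)$ with $a\in\langle\A_i:i\in I\rangle$, and applying \eqref{eq:cyclic} to the pair $(\langle\A_i:i\in I\rangle,\langle P\rangle)$ with $n=1$, $a_0=1_\A$, $a_1=a$ and $f_1=P$ gives $\Phi(Pa)=\varphi(a)\Phi(P)$, hence $\varphi_P=\varphi$ on $\langle\A_i'\rangle$. Consequently, on any $\varphi$-centered alternating product $z_1\cdots z_n$ of elements $z_k\in\A_{i_k}'$ the two sides of the c-freeness condition (CF) for $(\varphi,\varphi_P)$ become $\varphi(z_1\cdots z_n)$ and $\prod_k\varphi(z_k)=0$ respectively, so (CF) is equivalent to $\varphi(z_1\cdots z_n)=0$, i.e.\ to freeness of $(\A_i)_{i\in I}$ in $(\A,\varphi)$; the converse direction is immediate because c-freeness includes freeness with respect to $\varphi$. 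For \ref{obs2}: the $\varphi$-centered part of $\F_i'=\C 1_\A\oplus\F_i$ is exactly $\{0_\A\}\oplus\F_i$, any nonempty product of such elements lands in $\{0_\A\}\oplus\F$ and is killed by $\varphi$, so $(\F_i')_{i\in I}$ is automatically free with respect to $\varphi$, and the condition (CF) for $(\varphi,\varphi_P)$ reads $\varphi_P(f_1\cdots f_n)=\varphi_P(f_1)\cdots\varphi_P(f_n)$ for alternating $f_k\in\F_{i_k}$, which is precisely the Boolean independence of $(\F_i)_{i\in I}$ with respect to $\varphi_P$.

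For \ref{obs3}, the substantive item: freeness of $\A_1'$ and $\F_1'$ with respect to $\varphi$ is automatic, exactly as in \ref{obs2}. To check (CF) for $(\varphi,\varphi_P)$, take a $\varphi$-centered alternating product $z_1\cdots z_m$; the case $m=1$ is trivial, and for $m\ge2$ the product involves a centered factor from each of $\A_1'$ and $\F_1'$, so it equals $(0_\A,F)$, where $F$ is an alternating word whose $\A$-letters lie in $\mathring\A_1:=\{a\in\A_1:\varphi(a)=0\}$ and whose $\F$-letters lie in $\F_1$, with at least one $\A$-letter present. Write $PF$ in the shape $a_0f_1a_1f_2\cdots f_qa_q$ appearing in \eqref{eq:cyclic} by inserting copies of $1_\A$ where two consecutive $\F$-letters occur (necessarily $a_0=1_\A$ since $PF$ begins with $P$, and an extra $1_\A$ is appended if $F$ ends with an $\F$-letter), with $f_1=P$ and $f_2,\dots,f_q$ the $\F_1$-letters of $F$. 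Then \eqref{eq:cyclic} yields
\[
\Phi(PF)=\varphi(a_0a_q)\Bigl[\,\prod_{1\le i\le q-1}\varphi(a_i)\Bigr]\,\Phi(Pf_2\cdots f_q);
\]
since every $\mathring\A_1$-letter of $F$ occurs as one of $a_1,\dots,a_q$ and $F$ has at least one such letter, a factor $\varphi(a_i)$ on the right-hand side vanishes (recall $a_0=1_\A$), so $\Phi(PF)=0$ and thus $\varphi_P(z_1\cdots z_m)=0$. This matches $\prod_k\varphi_P(z_k)$, which also vanishes because $z_1\cdots z_m$ has a factor $z_k\in\A_1'$ with $\varphi_P(z_k)=\varphi(z_k)=0$. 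Hence $\A_1'$ and $\F_1'$ are c-free with respect to $(\varphi,\varphi_P)$. Running the same expansion of \eqref{eq:cyclic} with arbitrary letters $y_0,\dots,y_n\in\A_1'$ sandwiching $x_1,\dots,x_n\in\F_1$ produces
\[
\varphi_P(y_0x_1y_1\cdots x_ny_n)=\varphi_P(x_1\cdots x_n)\prod_{j=0}^n\varphi_P(y_j),
\]
the (anti)monotone factorization; together with the fact that $\varphi$ restricts to the canonical character $c1_\A+f\mapsto c$ on $\F_1'$, this gives the asserted equivalence with the (anti)monotone formulation.

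The main obstacle I anticipate is the bookkeeping in \ref{obs3}: correctly identifying which boundary letters of the word $F$ are present, inserting exactly the right auxiliary copies of $1_\A$ to bring $PF$ into the shape of \eqref{eq:cyclic}, and being careful that the sandwiched algebra in the (anti)monotone formulation must be read as $\F_1$ rather than $\F_1'$ (the factorization genuinely fails if $x_j$ is allowed to be the unit, unless $\varphi$ happens to be multiplicative on $\A_1$). An alternative, conceptually cleaner route to \ref{obs3} would be to start from Proposition~\ref{cor:equivalence}, which already provides the infinitesimal freeness of $\A_1'$ and $\F_1'$ in $(\B,\varphi,\varphi')$, and then specialize $\varphi'=0\oplus\Phi$ to the functional $\varphi_P(\cdot)=\varphi'(P\,\cdot\,)/\varphi'(P)$; however, extracting c-freeness this way appears to need essentially the same combinatorial input as the direct argument.
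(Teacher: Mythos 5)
Your proof is correct. Items \ref{obs1} and \ref{obs2} are handled exactly as the paper intends (the $n=1$ instance of \eqref{eq:cyclic} giving $\varphi_P=\varphi$ on the main-part algebra, and the vanishing of $\varphi$ on $\F$), so the only real difference is in item \ref{obs3}: the paper does not prove it but delegates the implication ``cyclic-antimonotone $\Rightarrow$ antimonotone'' to C\'ebron--Dahlqvist--Gabriel and the equivalence ``antimonotone $\Leftrightarrow$ c-free'' to Franz, whereas you verify both claims directly from \eqref{eq:cyclic} by padding $PF$ with copies of $1_\A$ and observing that any centered $\A_1$-letter kills one of the scalar factors $\varphi(a_0a_q)$ or $\varphi(a_i)$. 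This self-contained route costs some bookkeeping but buys a genuine clarification that the paper glosses over: the antimonotone factorization holds with the sandwiched letters drawn from $\F_1$, not from $\F_1'=\C 1_\A\oplus\F_1$, since admitting the unit in a sandwiched position would force $\varphi$ to be multiplicative on $\A_1$; under the paper's Definition \ref{defi:monotone} read literally, the pair that is antimonotone is $(\A_1',\F_1)$. Your computation of the c-freeness of $\A_1'$ and $\F_1'$ is unaffected by this and is exactly what the first proof of Theorem \ref{thm:c-free} needs.
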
 

In the last observation, the fact that cyclic-antimonotone independence implies the antimonotone independence was essentially proved by C\'ebron, Dahlqvist and Gabriel, see \cite[Proposition 2.15]{CDG}. 
The equivalence of antimonotone independence and c-freeness was observed by Franz \cite{Uwe05}.


\begin{proof}[First proof of Theorem \ref{thm:c-free}]

Let $\A' := \langle \A_i' : i\in\I \rangle \subset \B$ and $\F' := \langle \F_i' : i\in\I \rangle \subset \B$. 
From the assumption of weak B$'$-freeness, the algebras $\A'$ and $\F'$ are c-free with respect to $(\varphi, \varphi_P)$ by Observation \ref{Prop:cfreeforAF} \ref{obs3}.
Besides, the weak B$'$-freeness implies that the algebras $(\A_i)_{i\in I}$ are free with respect to $\varphi$, so that the algebras $(\A_i')_{i\in I}$ are c-free with respect to $(\varphi, \varphi_P)$ by Observation \ref{Prop:cfreeforAF} \ref{obs1}.

By the associativity of c-freeness and the facts discussed above, the c-freeness of $(\B_i)_{i\in\I}$ is equivalent to the c-freeness of $(\F_i')_{i \in I}$, the latter of which is equivalent to the Boolean independence of $(\F_i)_{i\in I}$ with respect to $\varphi_P$ by Observation \ref{Prop:cfreeforAF} \ref{obs2}. 
\end{proof}

The second proof is straightforward and it is done simply by moment calculations.

\begin{lemma}\label{lem:c-free}
  Let $(\B_i=\Alg{\A_i}{\F_i})_{i\in I}$ be type B${}^\prime$ subalgebras of $\B$. Let $P$ be an element of $\F$ with $\Phi(P)\ne 0$  and let $\F_P :=\{cP^n \mid n\in\N, c\in\C\}$. Assume that 
$((\A_i)_{i \in I}, (\F_i)_{i\in I \sqcup \{P\}})$ is weakly B${}^\prime$-free. Then
  for any $n \in \N$, $(i_1,i_2,\dots, i_n)\in \I^{(n)}$, $b_j = a_j + F_j \in \B_{i_j}=\A_i \oplus (\nuAlg{\A_i}{\F_i})$ with $\varphi(b_j) = 0 \; (1 \le j \le n)$, one has
  \[
    \varphi_P(b_1 b_2 \cdots b_n) = \varphi_P(F_1 F_2\cdots F_n).
  \]
\end{lemma}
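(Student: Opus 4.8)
The goal is to show that when we compute $\varphi_P(b_1 b_2 \cdots b_n)$ with each $b_j = a_j + F_j \in \B_{i_j}$ centered (so $\varphi(b_j) = \varphi(a_j) = 0$, i.e.\ $a_j \in \mathring{\A}_{i_j}$) and with alternating indices, only the ``all-perturbation'' term $\varphi_P(F_1 F_2 \cdots F_n)$ survives. Writing $\varphi_P(\cdot) = \Phi(P \,\cdot\,)/\Phi(P)$, it suffices to prove $\Phi(P b_1 b_2 \cdots b_n) = \Phi(P F_1 F_2 \cdots F_n)$. Expanding the product $b_1 b_2 \cdots b_n = (a_1+F_1)\cdots(a_n+F_n)$ by multilinearity produces a sum of monomials, each of which is an alternating product of $a_j$'s (from the main parts $\A$) and $F_j$'s (from $\nuAlg{\A_i}{\F_i} \subseteq \F$); after inserting $P$ on the left, each such monomial lies in $\F$ and has the shape of an alternating word in $\langle \A_i : i \in I\rangle$ and $\langle \F_i : i \in I \sqcup\{P\}\rangle$, so we may apply cyclic-antimonotone independence from hypothesis \ref{item:CM} of weak B$'$-freeness.

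\textbf{Key steps.} First I would rewrite each factor: since $F_j = a_0^{(j)} f_1^{(j)} a_1^{(j)} \cdots f_{m_j}^{(j)} a_{m_j}^{(j)}$ is a linear combination of monomials in $\nuAlg{\A_i}{\F_i}$, the whole product $P b_1 \cdots b_n$ expands into linear combinations of words of the form
\begin{equation*}
  P \, a_0 f_1 a_1 f_2 a_2 \cdots f_m a_m,
\end{equation*}
where each $a_\ell \in \langle \A_i : i \in I\rangle$ and each $f_\ell$ belongs to some $\F_{i}$. Because $P \in \F$ as well, such a word is an alternating product of elements of $\langle \A_i : i\in I\rangle$ and $\langle \F_i : i \in I \sqcup\{P\}\rangle$ with the outer factors being perturbations, so by \ref{item:CM} we get
\begin{equation*}
  \Phi(P a_0 f_1 a_1 \cdots f_m a_m) = \varphi(a_m a_0)\Big[\prod_{1\le \ell \le m-1}\varphi(a_\ell)\Big] \Phi(P f_1 f_2 \cdots f_m).
\end{equation*}
(Here I use the convention that each ``slot'' between two consecutive perturbation letters holds an element of $\langle \A_i : i \in I\rangle$, possibly $1_\A$ if two perturbations are adjacent.) Next, I would observe that a monomial in the expansion of $b_1\cdots b_n$ in which the $j$-th factor contributes its \emph{main part} $a_j$ forces one of the interior $\varphi(a_\ell)$ factors to be precisely $\varphi(a_j) = 0$: indeed $a_j \in \mathring{\A}_{i_j}$ appears sandwiched between two perturbation letters coming from $F_{j-1}$ and $F_{j+1}$ (or glued to neighbouring main parts, but because indices alternate, $a_j$ together with its centered neighbours still spans a centered element of $\mathring{\A}_{i_j}$ — this is where the $I^{(n)}$ hypothesis enters). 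Hence every mixed monomial dies, leaving only the term where every factor contributes $F_j$, namely $\Phi(P F_1 F_2 \cdots F_n)$. Dividing by $\Phi(P)$ gives the claim.

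\textbf{Main obstacle.} The delicate point is the bookkeeping in the previous step: when two consecutive factors $b_j, b_{j+1}$ both contribute main parts, the product $a_j a_{j+1}$ is \emph{not} automatically centered, and one has to track how the ``glued'' main-part blocks interact with the $\varphi(a_\ell)$ factors produced by cyclic-antimonotone independence. The clean way around this is to note that after inserting $P$, any maximal run of main-part factors $a_j a_{j+1}\cdots a_{j+r}$ sits in a single interior slot $a_\ell$ of the cyclic-antimonotone formula, so it contributes $\varphi(a_j a_{j+1}\cdots a_{j+r})$; but since $i_j \ne i_{j+1} \ne \cdots$, freeness of $(\A_i)_{i\in I}$ with respect to $\varphi$ (also part of weak B$'$-freeness) lets us expand this into an alternating centered product, and because each $a_{j+s} \in \mathring{\A}_{i_{j+s}}$, freeness gives $\varphi(a_j\cdots a_{j+r}) = 0$ whenever $r \ge 0$ — in particular the run is nonempty. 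The only subtlety left is the very first and very last slots, which by \ref{item:CM} appear as $\varphi(a_m a_0)$ (cyclically joined); here too, if either $a_0$ or $a_m$ is a genuine main part of some $b_j$, one uses $\varphi(b_j)=0$ together with freeness to kill the term, and otherwise $a_0 = a_m = 1_\A$ and the factor is $\varphi(1_\A)=1$, which is exactly what is needed for the surviving all-$F$ term. I would organize all of this as a single induction on $n$, peeling off the outermost factor, which is cleaner than the direct expansion; but either route reduces to the same elementary freeness cancellation.
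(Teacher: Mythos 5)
Your proof follows the same route as the paper's: expand each $b_j = a_j+F_j$, apply cyclic-antimonotone independence to every resulting word with $P$ prepended, and kill every mixed term because some slot carries a nonempty alternating product of centered elements whose $\varphi$-value vanishes by freeness; the paper compresses this into one line and your write-up supplies the bookkeeping, so the argument is correct in substance. One small correction: once $P$ is prepended, the leftmost perturbation letter in Definition \ref{def:cm} is $P$ itself, so the formula pairs $a_m$ with the empty slot $1_\A$ and turns $a_0$ into an interior slot, yielding $\varphi(a_m)\,\varphi(a_0)\prod_{1\le\ell\le m-1}\varphi(a_\ell)\,\Phi(Pf_1\cdots f_m)$ rather than your $\varphi(a_m a_0)\prod_{1\le\ell\le m-1}\varphi(a_\ell)\,\Phi(Pf_1\cdots f_m)$ --- this does not affect the vanishing of the mixed terms, and it actually removes the need for the first/last-slot discussion in your final paragraph.
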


\begin{proof} In the expansion 
\[
\Phi(Pb_1b_2\cdots b_n) = \sum_{c_j \in \{a_j,F_j\}} \Phi(P c_1 c_2 \cdots c_n), 
\]
cyclic-antimonotone independence and freeness imply that the term $\Phi(P c_1 c_2 \cdots c_n)$ vanishes as soon as there exists $j$ such that $c_j=a_j$. This yields 
  $ 
    \Phi_P(b_1b_2 \cdots b_n) = \Phi_P(F_1 F_2\cdots F_n)
  $
  as desired. 
\end{proof}

\begin{proof}[Second proof of Theorem \ref{thm:c-free}] 
   It is easy to see that the latter condition is necessary. We henceforth assume that $(\F_i)_{i\in I}$ are Boolean independent with respect to $\varphi_P$. Take any $n \in \N$, $(i_1,i_2,\dots, i_n)\in \I^{(n)}$, $b_j = a_j + F_j \in \B_{i_j}= \A_{i_j} \oplus (\nuAlg{\A_{i_j}}{\F_{i_j}})$ with $\varphi(b_j) = 0 \; (1 \le j \le n)$. 
  The goal is to establish $\phi_P(b_1b_2\cdots b_n) = \phi_P(b_1) \phi_P(b_2)\cdots \phi_P(b_n)$. By Lemma \ref{lem:c-free} this is equivalent to
  \begin{equation}\label{eq:Boolean}
    \varphi_P(F_1 F_2\cdots F_n) = \varphi_P(F_1)\varphi_P(F_2) \cdots \varphi_P(F_n), 
  \end{equation}
 i.e., $(\nuAlg{\A_{i}}{\F_{i}})_{i \in I}$ are Boolean independent with respect to $\varphi_P$. 
 
  By linearity of $\varphi_P$, we may suppose without loss of generality that every $F_j$ takes the monomial form $F_j = a^{(j)}_0 f^{(j)}_1 a^{(j)}_1 f^{(j)}_2 \cdots f^{(j)}_{n_j} a^{(j)}_{n_j} \; (n_j \in \N,\; a^{(j)}_k \in \A_{i_j},\;  f^{(j)}_k \in \F_{i_j},\; 1 \le k \le n_j)$ for $1 \le j \le n$.
  By cyclic-antimonotone independence, 
  \[
    \varphi_P(F_j)= \frac{\Phi(P a^{(j)}_0 f^{(j)}_1 a^{(j)}_1 f^{(j)}_2 \cdots f^{(j)}_{n_j} a^{(j)}_{n_j})}{\Phi(P)}= \phi_P\left(\orderedprod_{1 \le k \le n_j} f^{(j)}_k\right) \prod_{0 \le k \le n_j}\varphi(a^{(j)}_k), 
  \]
  where $\orderedprod_{1 \le k \le n_j} f^{(j)}_k$ is the ordered product $f^{(j)}_1 f^{(j)}_2 \cdots f^{(j)}_{n_j}$. 
  On the other hand, by cyclic-antimonotone independence and freeness, 
  \[
    \varphi_P(F_1 \cdots F_n) = \varphi_P\left(\orderedprod_{1\le j \le n} \;  \orderedprod_{1\le k \le n_j} f^{(j)}_k\right)
    \prod_{1 \le j \le n,\; 0 \le k \le n_j}\varphi(a^{(j)}_k). 
  \]
Because $(\F_i)_{i\in I}$ are Boolean independent with respect to $\varphi_P$,
  \[
  \varphi_P\left(\orderedprod_{1\le j \le n} \;  \orderedprod_{1\le k\le n_j} f^{(j)}_k\right) =  {\prod_{1\le j \le n}}\varphi_P\left(\orderedprod_{1\le k \le n_j} f^{(j)}_k\right).
  \]
Combining the above calculations completes \eqref{eq:Boolean}. 
\end{proof}

\section{Principal minor of unitarily invariant random matrices}
\label{sec:random_matrix}
As an application of free probability of type B$'$, we present several results on principal minors of rotationally invariant random matrices combining the type B$'$ setting and F\'evrier and Nica's infinitesimal analysis of free compressions \cite[Section 5]{FN}.

\subsection{A multivariate inverse Markov--Krein transform} 

Before going to principal minors of random matrices, we give  results on an inverse Markov--Krein transform in an abstract  setting. Throughtout this subsection, let $(\A, \varphi,\F,\Phi)$ be a ncps of type B${}^\prime$ and $(\B = \A\,\langle\F\rangle, \varphi, \varphi')$ be the associated infinitesimal ncps. 
We fix $q \in \F$ such that $q^2=q, \Phi(q)=1$ and $(\A,\{q\})$ is cyclic-antimonotone independent, and we set $p=1_\B-q \in \B$. For $a\in \A$ we introduce the notation $\tilde a := pa p \in \B$. 

Note that $q,\phi,\Phi$ are generalizations of the large $N$ limit of $\diag(0,0,\dots, 0,1) \in \M_N(\C)$, $\tr_N,\Tr_N$, respectively, see Subsection \ref{subsec:model}. We, however, do not assume the traciality of $\phi,\Phi$ in the present subsection.

\begin{theorem} \label{theorem:main}  For any $a_1,a_2,\dots, a_n \in \A$,  the following formula holds:    
  \begin{equation*}   \varphi'(\tilde{a}_{1}\tilde{a}_{2}\cdots\tilde{a}_{n}) = - \sum_{\pi \in \NC(n)} \abs{\Kr(\pi)} \kappa_\pi[a_{1},a_{2},\dots, a_{n}]. 
  \end{equation*}
  
\end{theorem}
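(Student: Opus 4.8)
The plan is to recognize $\tilde a_1\tilde a_2\cdots\tilde a_n$ as a free compression and to extract the $\epsilon$-coefficient of the $\G$-valued compression formula, where $\G=\C[\epsilon]/(\epsilon^2)$ and $\tildephi=\varphi+\epsilon\varphi'$. Since $q^2=q$ we have $p^2=(1_\B-q)^2=p$, so $\tilde a_1\cdots\tilde a_n=(pa_1p)(pa_2p)\cdots(pa_np)=pa_1pa_2p\cdots pa_np$. Applying Proposition~\ref{cor:equivalence} with $\A_1=\A$ and $\F_1=\C q$, the hypothesis that $(\A,\{q\})$ is cyclic-antimonotone independent is equivalent to the infinitesimal freeness of $\A':=\A\oplus\{0_\F\}$ and $\F_1':=\C 1_\A\oplus\C q$ in $(\B,\varphi,\varphi')$, that is, to the freeness of these two subalgebras of the $\G$-valued ncps $(\B,\tildephi)$. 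In particular $p=1_\B-q\in\F_1'$ is free over $\G$ from $\langle a_1,\dots,a_n\rangle\subseteq\A'$, with $\tildephi(p)=\varphi(1_\A)+\epsilon\,\varphi'(1_\B-q)=1-\epsilon\,\Phi(q)=1-\epsilon=:\tilde t$.

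I would then invoke the free-compression moment identity — valid over the base ring $\G$ since the full free-cumulant formalism carries over to $\G$-valued ncps (Subsection~\ref{subsec:FreeCumulants}), and in its infinitesimal form amounting to F\'evrier--Nica's analysis of compressions \cite[Section~5]{FN}:
\begin{equation*}
  \tildephi(\tilde a_1\cdots\tilde a_n)=\tildephi(pa_1pa_2p\cdots pa_np)=\sum_{\pi\in\NC(n)}\tilde t^{\,\abs{\Kr(\pi)}}\,\tildekappa_\pi[a_1,\dots,a_n].
\end{equation*}
Because $\varphi'$ vanishes on the subalgebra $\A'$ of $\B$, the $\G$-valued moments $\tildephi(a_{i_1}\cdots a_{i_k})=\varphi(a_{i_1}\cdots a_{i_k})$ are $\C$-valued, hence so are the cumulants $\tildekappa_\pi[a_1,\dots,a_n]=\kappa_\pi[a_1,\dots,a_n]$, the latter being precisely the free cumulants of $a_1,\dots,a_n$ with respect to $\varphi$; and since $\abs{\Kr(\pi)}\ge1$ and $\epsilon^2=0$, $\tilde t^{\,\abs{\Kr(\pi)}}=(1-\epsilon)^{\abs{\Kr(\pi)}}=1-\abs{\Kr(\pi)}\epsilon$. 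Substituting and comparing the coefficients of $\epsilon$ on the two sides yields
\begin{equation*}
  \varphi'(\tilde a_1\cdots\tilde a_n)=-\sum_{\pi\in\NC(n)}\abs{\Kr(\pi)}\,\kappa_\pi[a_1,\dots,a_n],
\end{equation*}
which is the claim. (If one does not wish to note that $\tildekappa_\pi[a_1,\dots,a_n]$ is $\C$-valued, one instead keeps the $\kappa'_\pi$-terms and uses $\sum_\pi\kappa'_\pi[a_1,\dots,a_n]=\varphi'(a_1\cdots a_n)=0$, which holds by the $\epsilon$-part of the moment-cumulant formula \eqref{moment-cumulant_tilde} at $\pi=1_n$ together with $a_1\cdots a_n=(a_1\cdots a_n,0_\F)\in\A\oplus\F$.)

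The only delicate point is the use of the free-compression moment identity in the generality needed here: over $\G$ rather than $\C$, and without traciality of $\tildephi$, which matters because $p$ appears at both ends of the word $pa_1pa_2p\cdots pa_np$. If one prefers to avoid citing it, an elementary substitute is to expand every $p=1_\A-q$ in $pa_1p\cdots pa_np$, apply \eqref{eq:cyclic} to each resulting word $a_1\cdots a_{s_1}\,q\,a_{s_1+1}\cdots a_{s_2}\,q\cdots q\,a_{s_m+1}\cdots a_n$ indexed by a subset $S=\{s_1<\cdots<s_m\}\subseteq\{0,1,\dots,n\}$, expand each scalar factor $\varphi(\cdot)$ by the $\C$-valued moment-cumulant formula, and verify that the alternating sum over $S$ collapses to $-\sum_\pi\abs{\Kr(\pi)}\kappa_\pi$. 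That bookkeeping — pairing a nonempty $S$ together with noncrossing partitions of its induced intervals against a pair $(\pi,\text{a block of }\Kr(\pi))$ — is the genuine combinatorial content, which the compression formula conveniently packages.
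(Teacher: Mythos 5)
Your proof is correct, and its skeleton coincides with the paper's main-text proof: both start from the observation that Proposition \ref{cor:equivalence} makes $p=1_\B-q$ infinitesimally free from $\A$, and both then treat $pa_1pa_2p\cdots pa_np$ as a free compression. The difference lies in which compression formula is invoked. The paper passes to the compressed space $(p\B p,\psi,\psi')$ with $\psi=\phi$, $\psi'=\phi+\phi'$ and cites F\'evrier--Nica's infinitesimal cumulant formula for compressions \cite[Formula (5.7)]{FN} (which it must first correct in a footnote, the published version being erroneous), obtaining $\ukappa_n'[\tilde a_1,\dots,\tilde a_n]=(1-n)\kappa_n[a_1,\dots,a_n]$ and then summing over $\NC(n)$. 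You instead stay in $(\B,\tildephi)$ over $\G=\C[\epsilon]/(\epsilon^2)$ and apply the Nica--Speicher compression \emph{moment} identity $\tildephi(pa_1p\cdots pa_np)=\sum_{\pi}\tilde t^{\,\abs{\Kr(\pi)}}\tildekappa_\pi[a_1,\dots,a_n]$ with $\tilde t=\tildephi(p)=1-\epsilon$, and read off the coefficient of $\epsilon$. This buys a cleaner derivation that bypasses both the compressed-space functionals $\psi,\psi'$ and the erroneous published formula; the price is the delicate point you flag yourself, namely that the compression identity must be valid over $\G$ and without traciality of $\tildephi$. Both are in fact fine: Subsection \ref{subsec:FreeCumulants} explicitly licenses the $\G$-valued cumulant machinery, $\tilde t$ is invertible in $\G$, and the standard derivation of the compression formula (cumulants with products as entries plus vanishing of mixed cumulants) nowhere uses traciality, even though $p$ sits at both ends of the word; a careful write-up should record this. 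Finally, your sketched elementary substitute (expanding every $p=1_\A-q$ and doing the inclusion--exclusion bookkeeping) is essentially the paper's Appendix \ref{sec:combin} proof, which organizes that bookkeeping through the identity $\sum_{\sigma\le\Kr(\pi)}\kappa'_\sigma[p,p,\dots,p]=-\abs{\Kr(\pi)}$ of Lemma \ref{lemma:kappa'}.
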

  This is a multivariate extension of \cite[Theorem 1.3]{FH} and can be proven in three ways. The first proof, which only works in the limited setting of Subsection \ref{subsec:model}, is just to follow the lines of \cite[Theorem 1.3]{FH}. This proof is omitted in this paper since it is almost the same as \cite[Theorem 1.3]{FH} and is lengthy (one needs Weingarten calculus).
  The second proof is rather direct and is included in Appendix \ref{sec:combin} for interested readers.
  The last proof is most noteworthy and is provided below; it connects our setting of type B${}^\prime$ and infinitesimal free compressions.

\begin{remark}[Multivariate inverse Markov--Krein transform]  \label{rem:MK}
Let $\mu\colon \C[x]\to\C$ be the distribution of $a \in\A$ with respect to $\varphi$. The \emph{inverse Markov--Krein transform} of $\mu$ is the distribution $\tau\colon \C[x]\to\C$ determined by 
\begin{equation*}
 \frac{{\rm d}}{{\rm d}z}G_\mu(z) = -G_\tau(z)G_\mu(z),  
\end{equation*} 
see \cite{K}. One can easily see that $\tau(1)=1$. In the special case of $a_{1}=a_{2}=\cdots = a_{n} =:a$, \cite[Theorem 1.3]{FH} implies 
\begin{equation}\label{eq:MK}
-\phi'(\tilde a^n)=\sum_{\pi \in \NC(n)} \abs{\Kr(\pi)} \kappa_\pi[a,a,\dots, a] = \tau(x^n),  \qquad n\ge1. 
\end{equation}
Therefore, the multilinear functionals $(M_n^\phi\colon \A^n\to\C)_{n\ge1}$ defined by 
\begin{equation}\label{eq:MK3}
M_n^\phi[a_1,a_2,\dots,a_n] := -\varphi'(\tilde{a}_{1}\tilde{a}_{2}\cdots\tilde{a}_{n}) = \sum_{\pi \in \NC(n)} \abs{\Kr(\pi)} \kappa_\pi[a_1,a_2,\dots, a_n]
\end{equation} can be interpreted as a {\sl multivariate inverse Markov--Krein transform}. 

Note that essentially the same generalization was independently proposed by Arizmendi, C\'ebron and Gilliers \cite[Equation (13)]{ACG}. 
They also introduce a multivariate generalization of Kerov's operator model \cite{K} that coincides with our $-\varphi'(\tilde{a}_{1}\tilde{a}_{2}\cdots\tilde{a}_{n})$ in a special case. The basic idea is as follows. Let $H$ be a Hilbert space and $\xi \in H$ be a unit vector. Let $Q_\xi\colon H\to\C\xi$ be the orthogonal projection and let $P_\xi:=I-Q_\xi$. Then the multilinear functional on $B(H)^n$
\begin{equation} \label{eq:MK2}
(A_1,A_2,\dots, A_n)\mapsto \Tr_H[A_1 A_2 \cdots A_n - P_\xi A_1 P_\xi A_2\cdots P_\xi A_n P_\xi ]
\end{equation}
is called the inverse Markov--Krein transform in \cite{ACG}. The pair $(B(H),\{Q_\xi\})$ is cyclic-antimonotone independent with respect to $(\phi_\xi, \Tr_H)$,   
 where $\phi_\xi$ is the vacuum state $\langle \cdot~\xi,\xi\rangle$. 
Because of the common cyclic-antimonotone independence, our definition \eqref{eq:MK3} coincides with \eqref{eq:MK2} when $\A = B(H)$, $\phi= \phi_\xi$ and $\Phi=\Tr_H$.  
\end{remark}

In free probability, an element of the form $r a r$, where $r$ is a projection free from $a$, is called the \emph{free compression} of $a$ and is useful in some contexts, see e.g.\ \cite[Section 14]{NS}. 
Free compressions in infinitesimal ncps were studied by F\'evrier and Nica in \cite[Section 5]{FN}. We can directly apply their results to our setting of type B${}^\prime$ to obtain a nontrivial relation between (multivariate) inverse Markov--Krein transform and infinitesimal free compression, which will complete the proof of Theorem \ref{theorem:main} and yield more results.

To apply the framework of F\'evrier and Nica, observe the critical fact that $p$ is infinitesimally free from $\A$ in $(\B,\phi,\phi')$, which is a direct consequence of Proposition \ref{cor:equivalence}. Then let us consider the subalgebra $p \B p \subseteq \B$ with internal unit $p$ and two linear functionals $\psi,\, \psi': p \B p \to \C$ defined as 
\begin{align}
  \psi &= \phi, \label{eq:psi}\\
  \psi'&= \phi + \phi'. \label{eq:psi'}
\end{align}
Since $\varphi'(q)=1$ and $\varphi'(1)=0$, 
\[
\psi'(p) = \phi(p) + \phi'(1-q) = 1 - 1 =0.
\]
Note that this definition coincides with \cite[Definition 5.2]{FN} for $\alpha=1, \alpha'=-1$. Moreover, 
let $\ukappa_n'$ be the $n$-th infinitesimal cumulant with respect to $(\psi, \psi')$. 

\begin{proof}[Proof of Theorem \ref{theorem:main}]

 Recall that, in the type B$'$  setting, $\kappa_n'[a_1,a_2,\dots, a_n]=0$ because $\phi'$ vanishes on $\A$.  
Since $p$ is infinitesimally free from $\A$ in $(\B,\phi,\phi')$, applying \cite[Formula (5.7)]{FN}
\footnote{The original formula is erroneous; the correct one is 
\begin{equation*}
    \ukappa'_n[px_1p, px_2p,\dots, px_np] = \frac{(n-1)\alpha'}{\alpha^2}\kappa_n[\alpha x_1, \alpha x_2,\dots, \alpha x_n] + \frac{1}{\alpha}\kappa_n'[\alpha x_1, \alpha x_2, \dots, \alpha x_n], \qquad n\in \N, 
  \end{equation*}
  which can be easily obtained by calculating the coefficient of $\epsilon$ in \cite[Formula (5.8)]{FN}.} 
yields  
  \begin{equation}\label{eq:sky}
    \ukappa_n'[\tilde{a}_1,\tilde{a}_2, \dots, \tilde{a}_n] = (1-n) \kappa_n[a_1,a_2,\dots,a_n]. 
  \end{equation}
This formula, combined with the moment-cumulant formula, readily yields 
  \begin{equation}
    \begin{split}
      \psi'(\tilde{a}_1 \tilde{a}_2 \cdots \tilde{a}_n) &= \sum_{\pi \in \NC(n)} \ukappa'_\pi[\tilde{a}_1, \tilde{a}_2, \cdots \tilde{a}_n] \\
      &= \sum_{\pi \in \NC(n)} (\abs{\pi}-n) \kappa_\pi[a_1, a_2,\dots, a_n]
    \end{split}
  \end{equation}
  and hence, by the moment-cumulant formula and $\abs{\pi} + \abs{\Kr(\pi)} = n + 1$, 
  \begin{equation*}
    \begin{split}
      \varphi'(\tilde{a}_1 \tilde{a}_2 \cdots \tilde{a}_n) &= \psi'(\tilde{a}_1 \tilde{a}_2 \cdots \tilde{a}_n) - \phi(\tilde{a}_1 \tilde{a}_2 \cdots \tilde{a}_n)\\
      &= \sum_{\pi \in \NC(n)} (\abs{\pi}-n-1)\kappa_\pi[a_1,a_2, \dots, a_n]\\
      &= -\sum_{\pi \in \NC(n)}  \abs{\Kr(\pi)}\kappa_\pi[a_1,a_2, \dots, a_n]. 
    \end{split}
  \end{equation*}
\end{proof}

\begin{proposition}\label{prop:free_ifree}
  Let $(\A_i)_{i\in I}$ be free subalgebras in $ (\A, \phi)$. 
  Then $(p\A_ip)_{i\in I}$ are infinitesimally free in $(p \B p, \psi, \psi')$.
\end{proposition}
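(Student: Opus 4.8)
The plan is to reduce the statement to the infinitesimal freeness result of Février–Nica for free compressions, in exactly the same way Theorem \ref{theorem:main} was proved. First I would recall that the projection $p = 1_\B - q$ is infinitesimally free from $\A$ in $(\B,\phi,\phi')$, which follows from Proposition \ref{cor:equivalence} (cyclic-antimonotone independence of $(\A,\{q\})$ is equivalent to infinitesimal freeness of $\A\oplus\{0_\F\}$ and $\C 1_\A \oplus \C q$). Since $(\A_i)_{i\in I}$ are free in $(\A,\phi)$ and $\phi'$ vanishes on $\A$, the subalgebras $\A_i' := \A_i \oplus \{0_\F\}$ are infinitesimally free in $(\B,\phi,\phi')$ (this is the elementary observation used repeatedly in Section \ref{sec:BFIF}: freeness with respect to $\phi$ plus vanishing of $\phi'$ on each subalgebra gives infinitesimal freeness). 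By associativity of infinitesimal freeness, the family $(\A_i')_{i\in I}$ together with $\C 1_\A \oplus \C q$ is infinitesimally free in $(\B,\phi,\phi')$, so in particular $p$ is infinitesimally free from $\langle \A_i : i\in I\rangle\oplus\{0_\F\}$.

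Next I would invoke Février–Nica's result on free compressions: if $p$ is an infinitesimally free projection (with $\phi(p)=1$, $\phi'(p)=-1$ here, matching $\alpha=1,\alpha'=-1$ as in \cite[Definition 5.2]{FN}) and the subalgebras $\A_i$ are infinitesimally free among themselves and from $p$, then the compressed subalgebras $p\A_i p$ are infinitesimally free in the compressed space $(p\B p, \psi, \psi')$ with $\psi = \phi$, $\psi' = \phi + \phi'$. Concretely, this is the multi-subalgebra version of \cite[Section 5]{FN}, whose single-algebra compression formula \eqref{eq:sky} was already used above; I would cite the corresponding statement on preservation of infinitesimal freeness under free compression (Février–Nica prove that free compression by a free projection preserves infinitesimal freeness, just as ordinary free compression preserves freeness). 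The definitions \eqref{eq:psi} and \eqref{eq:psi'} of $\psi,\psi'$ are precisely chosen to match their framework with $\alpha=1$, $\alpha'=-1$, and the computation $\psi'(p) = \phi(p) + \phi'(1-q) = 1-1 = 0$ already verified in the text shows $(p\B p,\psi,\psi')$ is a genuine infinitesimal ncps with internal unit $p$.

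The remaining point is bookkeeping: one must check that $p\A_i p \subseteq p\B p$ really are unital subalgebras with unit $p$ (clear, since $p^2 = p$ and $p a p \cdot p b p = p(apb)p$), and that the hypothesis of the Février–Nica compression theorem is met — namely that $\{\A_i\}_{i\in I}$ and $\{p\}$ are jointly infinitesimally free, which is what the first paragraph establishes. I expect the main (only) obstacle to be locating and correctly applying the exact form of Février–Nica's statement for several subalgebras: their paper phrases the free-compression results in \cite[Section 5]{FN} and one must make sure the multivariate version (infinitesimal freeness of $(p\A_i p)_{i\in I}$, not merely distributional identities for one algebra) is available; if it is only stated for one subalgebra, one supplies the standard argument that free compression by a free projection is "associative-friendly" — i.e., it commutes with taking the algebra generated and preserves the infinitesimal mixed-cumulant vanishing condition — which follows from the cumulant formula \eqref{eq:sky} applied to each block of a noncrossing partition together with the moment–cumulant formulas \eqref{moment-cumulant_tilde}. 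This is essentially the $\G$-valued analogue of \cite[Theorem 14.10 / Section 14]{NS} on compression by free projections, and the argument there carries over verbatim to $(\B,\tildephi)$ with $\G = \C[\epsilon]/(\epsilon^2)$.
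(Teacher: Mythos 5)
Your proposal is correct and takes essentially the same route as the paper: the paper's proof is precisely the combination of the compression cumulant formula \eqref{eq:sky} with the characterization of infinitesimal freeness by vanishing of mixed free and infinitesimal free cumulants (\cite[Proposition 4.7]{FN}), which is exactly the ``fallback'' cumulant argument you describe at the end. The preliminary reduction via joint infinitesimal freeness of the $\A_i'$ and $p$ and the search for a ready-made multivariate compression theorem in \cite{FN} are a harmless detour, since the blockwise application of \eqref{eq:sky} you outline is all that is needed.
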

\begin{proof}
This is a direct consequence of Equation \eqref{eq:sky} and the fact that infinitesimal freeness can be characterized by vanishing of free cumulants and infinitesimally free cumulants \cite[Proposition 4.7]{FN}. 
\end{proof}


We point out several interesting connections between infinitesimal free convolution and the inverse Markov--Krein transform. 

\begin{proposition} \label{prop:MK}
  Let $a\in\A$. Let $\mu$ be the distribution of $a$ with respect to $\phi$ and $\tau$ the inverse Markov--Krein transform of $\mu$.
  Then the infinitesimal distribution of $\tilde{a} := pap$ with respect to $(p \B p,\psi, \psi')$ is $(\mu,\, \mu - \tau)$.
\end{proposition}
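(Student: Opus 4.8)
The plan is to compute the infinitesimal distribution of $\tilde a = pap$ directly from the definitions, using Theorem \ref{theorem:main} for the $\varphi'$-part and a short calculation for the $\psi$-part. First I would record that, by \eqref{eq:psi}, the distribution of $\tilde a$ with respect to $\psi$ is simply $n \mapsto \psi(\tilde a^n) = \phi(\tilde a^n)$; since $p$ is infinitesimally free from $\A$ and in particular the (type A) distribution of $p$ with respect to $\phi$ is $\delta_1$, one checks that $\phi(\tilde a^n) = \phi(a^n) = \mu(x^n)$, so the $\psi$-distribution of $\tilde a$ is exactly $\mu$. This matches the first coordinate of the claimed pair $(\mu, \mu - \tau)$.

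Next I would compute the second coordinate, i.e.\ the distribution of $\tilde a$ with respect to $\psi'$. By \eqref{eq:psi'} we have $\psi'(\tilde a^n) = \phi(\tilde a^n) + \phi'(\tilde a^n) = \mu(x^n) + \varphi'(\tilde a^n)$. Now I invoke Theorem \ref{theorem:main} in the single-variable case $a_1 = \cdots = a_n = a$, together with the identity \eqref{eq:MK} from Remark \ref{rem:MK} which identifies $\sum_{\pi\in\NC(n)} |\Kr(\pi)|\, \kappa_\pi[a,\dots,a]$ with $\tau(x^n)$, the $n$-th moment of the inverse Markov--Krein transform $\tau$ of $\mu$. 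Hence $\varphi'(\tilde a^n) = -\tau(x^n)$, and therefore $\psi'(\tilde a^n) = \mu(x^n) - \tau(x^n)$, which is precisely the statement that the $\psi'$-distribution of $\tilde a$ is $\mu - \tau$.

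Assembling the two computations, the infinitesimal distribution of $\tilde a$ with respect to $(\psi, \psi')$ is $(\mu, \mu - \tau)$, as claimed. The only points that require a line of justification are: (i) that $\psi$ and $\psi'$ indeed form an infinitesimal distribution pair on $p\B p$, i.e.\ $\psi'(p) = 0$ — but this is already verified in the text immediately before the proof of Theorem \ref{theorem:main}; and (ii) the reduction $\phi(\tilde a^n) = \phi(a^n)$, which follows from $pap \cdot pap \cdots pap = p a p a \cdots p a p$ and the fact that, by cyclic-antimonotone independence of $(\A,\{q\})$ translated via Proposition \ref{cor:equivalence} into infinitesimal freeness of $p$ and $\A$, the projection $p$ behaves like $1_\A$ under $\phi$. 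I do not anticipate a serious obstacle here: the proposition is essentially a specialization of Theorem \ref{theorem:main} to the one-variable setting combined with the definition of $\tau$, and the main thing to get right is bookkeeping the signs and the shift by $\mu$ coming from \eqref{eq:psi'}.
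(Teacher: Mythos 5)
Your proposal is correct and follows essentially the same route as the paper: the first coordinate is the observation that $\phi$ kills the perturbation part so $\phi(\tilde a^n)=\phi(a^n)=\mu(x^n)$, and the second coordinate is $\psi'(\tilde a^n)=\phi(\tilde a^n)+\phi'(\tilde a^n)=\mu(x^n)-\tau(x^n)$ via the single-variable case of Theorem \ref{theorem:main} as recorded in \eqref{eq:MK}. Your extra justifications (that $\psi'(p)=0$ and that $p$ acts like $1_\A$ under $\phi$) are correct but already handled in the surrounding text; no gap.
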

\begin{proof} By definition, $ \phi(\tilde a^n) = \phi(a^n)= \mu(x^n)$ for all $n\ge0$. From Remark \ref{rem:MK}, if $a$ has distribution $\mu$ with respect to $\phi$, then $\psi'(\tilde a^0)=\psi'(p)=0 = \mu(1) - \tau(1)$ and 
\[
\psi'(\tilde a^n)=\phi(\tilde a^n)+ \phi'(\tilde a^n) = \mu(x^n) -\tau(x^n),\qquad n\ge1
\]
as desired. 
\end{proof}


\begin{proposition} \label{prop:additive}
Let $\mu_i~(i=1,2)$ be distributions with $\mu_i(1)=1$ and let $\mu:= \mu_1 \boxplus \mu_2$ be their free convolution. Let  $\tau_i, \tau$ be the inverse Markov--Krein transforms of $\mu_i,\mu$~($i=1,2$), respectively. Then 
\[
(\mu_1,\mu_1-\tau_1) \boxplus (\mu_2,\mu_2 - \tau_2) = (\mu, \mu-\tau).  
\]
\end{proposition}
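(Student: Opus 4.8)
The plan is to read the identity directly off Propositions~\ref{prop:MK} and~\ref{prop:free_ifree}: the former says that a free compression $\tilde a = pap$ has infinitesimal distribution $(\mu,\mu-\tau)$ with respect to $(\psi,\psi')$, and the latter says that free compression turns freeness into infinitesimal freeness. Both are stated for an arbitrary ncps of type B${}^\prime$ carrying a projection $q$ with $q^2=q$, $\Phi(q)=1$ and $(\A,\{q\})$ cyclic-antimonotone independent, and the proposition concerns only distributions on $\C[x]$; so we are free to choose a convenient ambient model in which $\mu_1,\mu_2$ are realized as the $\varphi$-distributions of \emph{free} elements $a_1,a_2\in\A$.

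First I would produce such a model. Let $(\A,\varphi)$ be the algebraic free product of $(\C[x],\mu_1)$ and $(\C[x],\mu_2)$, with $a_1,a_2$ the images of the two copies of $x$, so that $a_1,a_2$ are free with distributions $\mu_1,\mu_2$. For the perturbation part put $\F := \A\otimes_{\C}\A$, with product $(a\otimes b)(a'\otimes b') := \varphi(ba')\,a\otimes b'$, bimodule action $c\cdot(a\otimes b)\cdot d := (ca)\otimes(bd)$, linear functional $\Phi(a\otimes b) := \varphi(ab)$, and $q := 1_\A\otimes 1_\A$. A routine verification shows that $(\A,\varphi,\F,\Phi)$ is a ncps of type B${}^\prime$ with $q^2 = q$ and $\Phi(q)=1$, and that in $\B$ one has $a_0 q a_1 q\cdots a_{n-1} q a_n = \varphi(a_1)\cdots\varphi(a_{n-1})\,a_0\otimes a_n$; applying $\Phi$ and noting $\Phi(q^n)=1$ then gives exactly the defining identity of cyclic-antimonotone independence for $(\A,\{q\})$.

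With the model in place the argument is short. By Proposition~\ref{prop:free_ifree} the subalgebras $p\langle a_1\rangle p$ and $p\langle a_2\rangle p$ are infinitesimally free in $(p\B p,\psi,\psi')$, and by Proposition~\ref{prop:MK} the element $\tilde a_i = pa_ip$ has infinitesimal distribution $(\mu_i,\mu_i-\tau_i)$ there; hence, by the definition of infinitesimal free convolution, $\tilde a_1+\tilde a_2$ has infinitesimal distribution $(\mu_1,\mu_1-\tau_1)\boxplus(\mu_2,\mu_2-\tau_2)$. On the other hand $\tilde a_1+\tilde a_2 = p(a_1+a_2)p = \widetilde{a_1+a_2}$, and $a_1+a_2$ has $\varphi$-distribution $\mu=\mu_1\boxplus\mu_2$ since $a_1,a_2$ are free; so a second application of Proposition~\ref{prop:MK} shows that $\widetilde{a_1+a_2}$ has infinitesimal distribution $(\mu,\mu-\tau)$. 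Comparing the two descriptions yields the claim.

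The one point requiring care is the model construction and the (automatic) observation that Propositions~\ref{prop:MK}--\ref{prop:free_ifree} do not depend on the chosen model. If one prefers a model-free argument, there is a purely analytic proof via \eqref{eq:infinitesimal_convolution}: write $G_\tau = F_\mu'/F_\mu = -G_\mu'/G_\mu$ and likewise for $\tau_1,\tau_2$, and set $\omega_i := F_{\mu_i}^{-1}\circ F_\mu$, so that $F_{\mu_i}\circ\omega_i = F_\mu$ and $\omega_1+\omega_2 = z+F_\mu$. Substituting $G_{\mu_i-\tau_i} = G_{\mu_i}-G_{\tau_i}$ into \eqref{eq:infinitesimal_convolution}, the $\mu_i$-terms add up to $G_\mu(\omega_1+\omega_2)' = G_\mu(1+F_\mu')$, while each $\tau_i$-term equals $(F_{\mu_i}\circ\omega_i)'/F_\mu = F_\mu'/F_\mu = G_\tau$; the total is $G_\mu(1+F_\mu') - 2G_\tau = G_\mu - G_\tau$ since $G_\mu F_\mu' = G_\tau$. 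As the first components coincide trivially, this reproves the proposition.
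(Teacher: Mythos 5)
Your main argument is exactly the paper's proof: realize $\mu_1,\mu_2$ as the $\varphi$-distributions of free elements $a_1,a_2$, invoke Proposition \ref{prop:free_ifree} for infinitesimal freeness of $pa_1p$ and $pa_2p$, and conclude via $pa_1p+pa_2p=p(a_1+a_2)p$ together with Proposition \ref{prop:MK}. The one thing you add is the explicit rank-one model $\F=\A\otimes\A$ with $(a\otimes b)(a'\otimes b')=\varphi(ba')\,a\otimes b'$ and $q=1\otimes1$, which correctly supplies the ambient ncps of type B${}^\prime$ whose existence the paper takes for granted; this is a legitimate (if not strictly demanded) piece of diligence. Your alternative analytic proof via \eqref{eq:infinitesimal_convolution} is a genuinely different route and checks out: the subordination identities $G_{\mu_i}\circ\omega_i=G_\mu$, $\omega_1+\omega_2=z+F_\mu$, and $G_\mu F_\mu'=G_\tau$ combine exactly as you say to give $G_\mu-G_\tau$. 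It buys a model-free verification at the cost of being purely computational, whereas the paper's argument explains \emph{why} the identity holds (compression commutes with addition).
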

\begin{proof} Let $a_1,a_2\in \A$ be free elements such that $a_i$ has distribution $\mu_i$ with respect to $\phi$, $i=1,2$. By Proposition \ref{prop:free_ifree}, $pa_1 p$ and $pa_2 p$ are infinitesimally free in $(p\B p, \psi, \psi')$. Then the obvious formula $pa_1 p + p a_2p = p(a_1 + a_2)p$ and Proposition \ref{prop:MK}  yield the desired result. 
\end{proof}

Surprisingly, the multiplicative version also holds but the proof is more involved.

\begin{proposition} \label{prop:multiplicative}
  Let $\mu_i \; (i=1,2)$ be distributions with $\mu_i(1) = 1$ and let $\mu = \mu_1 \boxtimes \mu_2$ be their free multiplicative convolution.
  Let $\tau_i, \tau$ be the inverse Markov--Krein transform of $\mu_i, \mu$, respectively.
  Then
  \begin{equation}\label{eq:mult}
    (\mu_1, \mu_1 - \tau_1) \boxtimes (\mu_2, \mu_2 - \tau_2) = (\mu, \mu -\tau).
  \end{equation}
\end{proposition}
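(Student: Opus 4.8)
The plan is to deduce Proposition \ref{prop:multiplicative} from Propositions \ref{prop:free_ifree} and \ref{prop:MK}, in the same spirit as the additive Proposition \ref{prop:additive}; the extra difficulty is that $\tilde a_1\tilde a_2=pa_1pa_2p$ is not $\widetilde{a_1a_2}=p(a_1a_2)p$ (they differ by $-p(a_1qa_2)p$ with $a_1qa_2\in\F$), so one cannot merely invoke additivity of $p(\cdot)p$. First I would fix $a_1,a_2\in\A$ that are free with respect to $\varphi$ and have distributions $\mu_1,\mu_2$, put $\tilde a_i=pa_ip$, and let $\A_1,\A_2$ be the unital subalgebras generated by $a_1,a_2$. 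Since $\A_1,\A_2$ are free in $(\A,\varphi)$, Proposition \ref{prop:free_ifree} makes $p\A_1p,p\A_2p$ infinitesimally free in $(p\B p,\psi,\psi')$, hence so are $\tilde a_1,\tilde a_2$; and by Proposition \ref{prop:MK} their infinitesimal distributions are $(\mu_1,\mu_1-\tau_1)$ and $(\mu_2,\mu_2-\tau_2)$. So, by the definition of the infinitesimal free multiplicative convolution $\boxtimes$ of infinitesimal distributions, the infinitesimal distribution of $\tilde a_1\tilde a_2$ with respect to $(\psi,\psi')$ is the left-hand side of \eqref{eq:mult}. On the other hand $a_1a_2\in\A$ has $\varphi$-distribution $\mu_1\boxtimes\mu_2=\mu$ by freeness, so Proposition \ref{prop:MK} applied to $a_1a_2$ identifies the infinitesimal distribution of $\widetilde{a_1a_2}$ with the right-hand side of \eqref{eq:mult}. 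Thus the proposition reduces to the claim that $\tilde a_1\tilde a_2$ and $\widetilde{a_1a_2}$ have the same distribution with respect to the $\G$-valued functional $\tilde\psi:=\psi+\epsilon\psi'$ on $p\B p$.

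To prove that claim I would read $\tilde\psi$ as a $\G$-valued free compression. Since $\psi=\varphi$, $\psi'=\varphi+\varphi'$, and $\tildephi(p)=\varphi(p)+\epsilon\varphi'(p)=1-\epsilon$, one has $\tilde\psi=(1-\epsilon)^{-1}\Restr{\tildephi}{p\B p}$ with $\tildephi=\varphi+\epsilon\varphi'$; and by Proposition \ref{cor:equivalence} (applied to $(\A,\{q\})$) the idempotent $p$ is free from $\A$ in the $\G$-valued noncommutative probability space $(\B,\tildephi)$, with $\tildephi(p)=1-\epsilon$ invertible in $\G=\C[\epsilon]/(\epsilon^2)$. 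The classical free-compression-by-a-projection calculus is built only from the moment--cumulant and cumulants-with-products formulas \cite[Theorem 14.4]{NS}, which are algebraic identities over any commutative coefficient ring, so it transfers to $(\B,\tildephi)$: for $x\in\A$ with $\tildephi$-distribution $\lambda$, the $\tilde\psi$-distribution of $pxp$ depends only on $\lambda$ and $t:=1-\epsilon$, equals the $t$-compression $\lambda^{\langle t\rangle}$, and satisfies $S_{\lambda^{\langle t\rangle}}(z)=S_\lambda(tz)$. Since $\tilde a_1,\tilde a_2$ are free with respect to $\tilde\psi$ with $\tilde\psi$-distributions $\mu_1^{\langle t\rangle},\mu_2^{\langle t\rangle}$, the $\tilde\psi$-distribution of $\tilde a_1\tilde a_2$ is $\mu_1^{\langle t\rangle}\boxtimes\mu_2^{\langle t\rangle}$, and the dilation identity gives
\[
S_{\mu_1^{\langle t\rangle}\boxtimes\mu_2^{\langle t\rangle}}(z)=S_{\mu_1}(tz)\,S_{\mu_2}(tz)=S_{\mu_1\boxtimes\mu_2}(tz)=S_{\mu^{\langle t\rangle}}(z),
\]
so $\mu_1^{\langle t\rangle}\boxtimes\mu_2^{\langle t\rangle}=\mu^{\langle t\rangle}$, which is exactly the $\tilde\psi$-distribution of $\widetilde{a_1a_2}$. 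This would finish the proof.

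The hard part is the second paragraph: one has to set up $\G$-valued free multiplicative convolution and the $S$-transform carefully and check that the classical free-compression formulas and the dilation rule $S_{\lambda^{\langle t\rangle}}(z)=S_\lambda(tz)$ really do carry over verbatim (the standing assumption that the distributions have nonzero mean, implicit in $\boxtimes$, guarantees that the relevant first moments, such as $\tildephi(p)=1-\epsilon$, are invertible in $\G$). A more computational alternative that avoids $S$-transforms: by Theorem \ref{theorem:main} one has $\varphi'((\tilde a_1\tilde a_2)^n)=-\sum_{\pi\in\NC(2n)}\abs{\Kr(\pi)}\kappa_\pi[a_1,a_2,\dots,a_1,a_2]$ and $\varphi'((\widetilde{a_1a_2})^n)=-\sum_{\rho\in\NC(n)}\abs{\Kr(\rho)}\kappa_\rho[a_1a_2,\dots,a_1a_2]$, and the $\varphi$-parts of the two $n$-th moments coincide by a direct expansion (only the monomial with no factor $q$ survives $\varphi$); hence the claim reduces to the combinatorial identity $\sum_{\pi\in\NC(2n)}\abs{\Kr(\pi)}\kappa_\pi[a_1,a_2,\dots,a_1,a_2]=\sum_{\rho\in\NC(n)}\abs{\Kr(\rho)}\kappa_\rho[a_1a_2,\dots,a_1a_2]$ for $\varphi$-free $a_1,a_2$, which one checks by expanding the right side with \cite[Theorem 14.4]{NS}, deleting the mixed cumulants killed by freeness, and matching the surviving terms with the left side under the bijection sending $\pi$ to the partition of $[n]$ obtained by halving $\pi\vee\{\{1,2\},\{3,4\},\dots,\{2n-1,2n\}\}$, along which $\abs{\Kr(\cdot)}$ is preserved. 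Either way, the bookkeeping is what makes the multiplicative case more delicate than the additive one.
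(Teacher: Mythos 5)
Your fallback argument (the final paragraph of your proposal) is essentially the paper's own proof. The paper makes exactly the reduction you describe: the two sides of \eqref{eq:mult} are the infinitesimal distributions of $pa_1pa_2p$ and $p(a_1a_2)p$, the $\varphi$-parts agree for trivial reasons, and Theorem \ref{theorem:main} converts the claim into the equality of the two weighted cumulant sums, which the paper proves as Lemma \ref{lem:core} (for arbitrary tuples $a_1b_1,\dots,a_nb_n$, not only the constant one). Its bookkeeping is the same as yours: expand $\kappa_\rho[a_1b_1,\dots,a_nb_n]$ by \cite[Theorem 14.4]{NS}, discard mixed cumulants, and match the surviving pairs $(\pi_1,\pi_2)$ with $\pi_2\le\Kr(\pi_1)$ against pairs $(\sigma,\Kr_\rho(\sigma))$ via the poset isomorphism and the identity $n+\abs{\rho}=\abs{\sigma}+\abs{\Kr_\rho(\sigma)}$ --- which is precisely the statement that your halving map preserves $\abs{\Kr(\cdot)}$. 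So on this route you and the paper agree, and your bijection is correct.

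Your primary route via $\G$-valued free compression and the $S$-transform is a genuinely different packaging, and the reduction in your first paragraph (including the identification $\tilde\psi=(1-\epsilon)^{-1}\Restr{\tildephi}{p\B p}$ and the use of Propositions \ref{prop:free_ifree} and \ref{prop:MK}) is sound. Two caveats, though. First, the $S$-transform of $\tilde a_i$ requires $\mu_i(x)$ to be invertible in $\G$, and the proposition makes no nonzero-mean assumption; in this paper $\boxtimes$ of linear functionals on $\C[x]$ is the purely combinatorial operation, so the hypothesis is not ``implicit'' and your primary route does not cover the mean-zero case. Second, if one verifies the key identity $\mu_1^{\langle t\rangle}\boxtimes\mu_2^{\langle t\rangle}=(\mu_1\boxtimes\mu_2)^{\langle t\rangle}$ honestly at the cumulant level (the safe way to avoid building a $\G$-valued $S$-transform calculus for $t=1-\epsilon$), the computation is $t^{(n-\abs{\pi})+(n-\abs{\Kr(\pi)})}=t^{n-1}$, i.e.\ once more $\abs{\pi}+\abs{\Kr(\pi)}=n+1$; the two routes therefore rest on the same Kreweras cardinality identity. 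Doing the combinatorics directly, as the paper does, buys the more general Lemma \ref{lem:core} and dispenses with any mean hypothesis.
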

To prove Proposition \ref{prop:multiplicative}, we observe from the proof of Proposition \ref{prop:additive} that the left-hand side of \eqref{eq:mult} is the distribution of $pa_1pa_2p$ and the right-hand side is that of $pa_1a_2p$.
Thus, it is sufficient to show that the infinitesimal distributions of $pa_1pa_2p$ and $pa_1a_2p$ with respect to $(\psi, \psi')$ coincide. For this it is sufficient to show
    \begin{equation} \label{eq:moments_phi_prime}
      \phi'((pa_1a_2p)^n) = \phi'((pa_1pa_2p)^n)
    \end{equation}
    for $n \in \N$. We here use Theorem \ref{theorem:main} to  calculate separately the left and right sides of  \eqref{eq:moments_phi_prime} as 
    \begin{align*}
      \phi'((pa_1a_2p)^n) &= - \sum_{\pi \in \NC(n)} \abs{\Kr(\pi)} \fc_{\pi}[a_1a_2,a_1a_2, \dots, a_1a_2]  \\
      \phi'((pa_1pa_2p)^n) &=- \sum_{\Pi \in \NC(2n)} \abs{\Kr(\Pi)} \fc_{\Pi}[a_1, a_2,a_1,a_2, \dots, a_1, a_2].
    \end{align*}
    Thus, the proof of Proposition \ref{prop:multiplicative} is completed by the following more general combinatorial formula. 
\begin{lemma} \label{lem:core}
  Let $\A_1$,$\A_2$ be a pair of freely independent subalgebras.
  Then
  \begin{equation} \label{eq:multiplication}
    \sum_{\pi\in\NC(n)} \abs{\Kr(\pi)} \fc_\pi[a_1b_1,a_2b_2 \dots, a_nb_n] =  \sum_{\Pi\in\NC(2n)} \abs{\Kr(\Pi)} \fc_\Pi[a_1, b_1,a_2,b_2, \dots, a_n, b_n]
  \end{equation}
  for $n\in\N$, $a_j \in \A_1$, $b_j \in \A_2$ $(j \in [n])$.
\end{lemma}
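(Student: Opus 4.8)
The plan is to reduce the identity to the purely combinatorial statement that collapsing adjacent pairs of blocks sets up a correspondence between $\NC(2n)$ and $\NC(n)$ which tracks both the Kreweras complement sizes and the cumulant values, using freeness of $\A_1,\A_2$ to make the right-hand cumulants $\fc_\Pi[a_1,b_1,\dots,a_n,b_n]$ vanish unless each block of $\Pi$ lies entirely within the odd positions $\{1,3,\dots,2n-1\}$ or entirely within the even positions $\{2,4,\dots,2n\}$. Concretely, freeness gives $\fc_m[c_1,\dots,c_m]=0$ whenever $c_1,\dots,c_m$ are an alternating word from $\A_1$ and $\A_2$ with $m\ge 2$ and a genuine alternation occurs; since the $2n$-tuple $(a_1,b_1,a_2,b_2,\dots,a_n,b_n)$ alternates between $\A_1$ and $\A_2$, a block $V$ of $\Pi$ with mixed parity contributes $\fc_{|V|}[\cdots]=0$. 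Hence on the right-hand side we may restrict to those $\Pi\in\NC(2n)$ that refine the partition $\{\{1,3,\dots,2n-1\},\{2,4,\dots,2n\}\}$; such a $\Pi$ is a pair $(\pi_{\mathrm{odd}},\pi_{\mathrm{even}})$ of noncrossing partitions, and the noncrossing constraint linking the two forces, by a standard argument, that $\pi_{\mathrm{even}}\le \Kr(\pi_{\mathrm{odd}})$ (in the natural identification of the even positions with the ``gaps'' of the odd positions).

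The second step is to organize the double sum $\sum_{\pi_{\mathrm{odd}}}\sum_{\pi_{\mathrm{even}}\le \Kr(\pi_{\mathrm{odd}})}$. For a fixed $\pi:=\pi_{\mathrm{odd}}\in\NC(n)$, the cumulant factor factorizes as $\fc_\pi[a_1,\dots,a_n]\cdot \fc_{\pi_{\mathrm{even}}}[b_1,\dots,b_n]$ by multiplicativity of cumulants \eqref{eq:multiplicativity_kappa}. Summing $\fc_{\pi_{\mathrm{even}}}[b_1,\dots,b_n]$ over all $\pi_{\mathrm{even}}\le \Kr(\pi)$ gives, by the moment-cumulant formula \eqref{eq:moment-_cumulant_A}, the moment-type quantity $\prod_{W\in\Kr(\pi)}\varphi(\textstyle\prod_{j\in W}b_j)$; but one also needs the weight $|\Kr(\Pi)|$, and here is where the bookkeeping must be done carefully: for $\Pi=(\pi,\pi_{\mathrm{even}})$ one computes $|\Kr(\Pi)|$ in terms of $|\pi|$, $|\pi_{\mathrm{even}}|$ and $n$ via the size identity $|\Pi|+|\Kr(\Pi)|=2n+1$, i.e. $|\Kr(\Pi)|=2n+1-|\pi|-|\pi_{\mathrm{even}}|$. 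So the right-hand side becomes
\begin{equation*}
\sum_{\pi\in\NC(n)}\fc_\pi[a_1,\dots,a_n]\sum_{\pi_{\mathrm{even}}\le\Kr(\pi)}\bigl(2n+1-|\pi|-|\pi_{\mathrm{even}}|\bigr)\fc_{\pi_{\mathrm{even}}}[b_1,\dots,b_n].
\end{equation*}
Meanwhile the left-hand side, again using $|\Kr(\pi)|=n+1-|\pi|$ and the moment-cumulant formula applied to the products $a_jb_j$, should be massaged into the same form; the key is that $\fc_\pi[a_1b_1,\dots,a_nb_n]$ expands, by the standard formula for cumulants of products (the $\Kr$-insertion formula, see \cite[Theorem 14.4]{NS}), precisely as $\sum_{\pi_{\mathrm{even}}\le\Kr(\pi)}\fc_\pi[a_1,\dots,a_n]\fc_{\pi_{\mathrm{even}}}[b_1,\dots,b_n]$ when $\A_1,\A_2$ are free, so that the left side equals $\sum_\pi\sum_{\pi_{\mathrm{even}}}|\Kr(\pi)|\,\fc_\pi[\mathbf a]\fc_{\pi_{\mathrm{even}}}[\mathbf b]$.

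The final step is then the numerical identity that, after swapping to the variable $\rho:=\Kr(\pi)$ (an order-reversing bijection of $\NC(n)$ with $|\pi|+|\rho|=n+1$), the two weight functions agree: one must check $|\Kr(\pi)|=n+1-|\pi|$ on the left matches, term by term after the reindexing, the weight $2n+1-|\pi|-|\pi_{\mathrm{even}}|$ summed against the same $\fc$-products on the right. This is a linear identity among the three statistics $|\pi|$, $|\pi_{\mathrm{even}}|$ and their Kreweras complements and is verified by summing over $\pi_{\mathrm{even}}\le\Kr(\pi)$ using the known evaluation $\sum_{\sigma\le\eta}|\sigma|=\sum_{\sigma\le\eta}(n+1-|\Kr_\eta(\sigma)|)$ within the interval $[0_n,\eta]$, i.e. a telescoping/double-counting of blocks across Kreweras complementation.

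\textbf{Main obstacle.} I expect the genuinely delicate point to be the second step: correctly identifying $|\Kr(\Pi)|$ for the parity-refining $\Pi$ in terms of the data of $\pi_{\mathrm{odd}}$ and $\pi_{\mathrm{even}}$, and then matching the resulting weight with the weight $|\Kr(\pi)|$ on the left after the cumulants-of-products expansion. The algebraic/freeness input (vanishing of mixed cumulants, multiplicativity, the $\Kr$-insertion formula) is routine; the real work is the bijective/enumerative accounting of Kreweras complement block counts under the ``merge adjacent pairs'' map $\NC(2n)\dashrightarrow\NC(n)$, and making sure the linear identity in $|\pi|,|\pi_{\mathrm{even}}|,n$ holds identically rather than only after summation. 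A clean way to sidestep some of this may be to prove \eqref{eq:multiplication} by instead re-deriving both sides from Theorem \ref{theorem:main} applied in a free-compression ncps — i.e. recognizing the left side as $-\varphi'((p a_1b_1 a_2b_2\cdots)^{?})$-type data and the right as the analogous quantity for $pa_1pb_1p\cdots$, and invoking Proposition \ref{prop:free_ifree} together with the coincidence of infinitesimal distributions — but since Theorem \ref{theorem:main} is being used to reduce Proposition \ref{prop:multiplicative} to this lemma, such an argument would need to be carefully de-circularized.
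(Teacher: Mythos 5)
Your overall strategy (restrict the right-hand side to parity-refining partitions $\Pi=(\pi_{\mathrm{odd}},\pi_{\mathrm{even}})$ with $\pi_{\mathrm{even}}\le\Kr(\pi_{\mathrm{odd}})$ and weight $\abs{\Kr(\Pi)}=2n+1-\abs{\pi_{\mathrm{odd}}}-\abs{\pi_{\mathrm{even}}}$, then expand the left-hand cumulants of products) is the paper's, and your treatment of the right-hand side is correct. The gap is in your expansion of the left-hand side. The formula
\[
\fc_\pi[a_1b_1,\dots,a_nb_n]=\sum_{\pi_{\mathrm{even}}\le\Kr(\pi)}\fc_\pi[\mathbf a]\,\fc_{\pi_{\mathrm{even}}}[\mathbf b]
\]
is not the cumulants-of-products formula and is false: already for $n=2$ and $\pi=0_2$ its left side is $\fc_1[a_1b_1]\fc_1[a_2b_2]=\varphi(a_1)\varphi(b_1)\varphi(a_2)\varphi(b_2)$, while your right side contains the extra term $\fc_1[a_1]\fc_1[a_2]\fc_2[b_1,b_2]$. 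The correct consequence of \cite[Theorem 14.4]{NS} for free subalgebras is
\[
\fc_\pi[a_1b_1,\dots,a_nb_n]=\sum_{\substack{\sigma\in\NC(n)\\ \sigma\le\pi}}\fc_\sigma[\mathbf a]\,\fc_{\Kr_\pi(\sigma)}[\mathbf b],
\]
where the $a$-partition is a summation variable $\sigma$ refining $\pi$ and the $b$-partition is the \emph{relative} Kreweras complement $\Kr_\pi(\sigma)$, determined by $\sigma$ --- not a free variable ranging over $[0_n,\Kr(\pi)]$ with the $a$-partition frozen at $\pi$. With your version the two sides of \eqref{eq:multiplication} genuinely disagree: for $n=2$ your left-hand expansion produces $\fc_1[a_1]\fc_1[a_2]\fc_2[b_1,b_2]$ with coefficient $1$, whereas the true common value of both sides has coefficient $2$. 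So the ``final step'' weight-matching you describe cannot be carried out from your starting point.

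Once the product formula is corrected, the remaining bookkeeping is organized differently from what you sketch: the weight $\abs{\Kr(\pi)}$ on the left is attached to the outer partition $\pi\ge\sigma$, and it is matched with the right-hand weight $\abs{\Kr(\pi_1)}+\abs{\Kr(\pi_2)}-1$ via the poset isomorphism $[\sigma,1_n]\ni\pi\mapsto\Kr_\pi(\sigma)\in[0_n,\Kr(\sigma)]$ together with the identity $\abs{\Kr(\pi)}=\abs{\Kr(\sigma)}+\abs{\Kr(\Kr_\pi(\sigma))}-1$. This makes the weights agree term by term; no summation identity of the kind you invoke at the end is needed (nor would one rescue the incorrect expansion). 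Your fallback suggestion of deriving the lemma from Theorem \ref{theorem:main} is, as you note yourself, circular in this paper's logical order.
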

\begin{remark}
The result can be extended to a family of freely independent subalgebras $\{\A_i\}_{i=1}^k$ and tuples $(a^{(1)}_1 a^{(2)}_1 \cdots a^{(k)}_1, a^{(1)}_2 a^{(2)}_2 \cdots a^{(k)}_2, \dots, a^{(1)}_n  a^{(2)}_n \cdots a^{(k)}_n)$ where $a^{(i)}_j \in \A_i$.
\end{remark}
\begin{proof}
We recall that there exists a noncrossing partition $\Kr_\pi(\sigma)$ called the relative Kreweras complement for $\sigma, \pi \in \NC(n)$ such that $\sigma \le \pi$, see \cite[Definition 18.3]{NS}.
This is just the union of the Kreweras complement of $\sigma_V$ taken in each block $V \in \pi$.
Thus, the relative Kreweras complement for $1_n$ coincides with the normal one: $\Kr_{1_n}(\pi) = \Kr(\pi)$ for $\pi \in \NC(n)$.
Also, the formula $n + \abs{\pi} = \abs{\sigma} + \abs{\Kr_{\pi}(\sigma)}$ holds.
Equivalently, 
\begin{equation} \label{eq:cardinality}
  \abs{\Kr(\pi)} = \abs{\Kr(\sigma)} + \abs{\Kr(\Kr_\pi(\sigma))} - 1.
\end{equation}
Notably, the following poset isomorphism holds: 
\begin{equation*} 
  \tau \in [\sigma, \pi] \simeq  [0_n, \Kr_\pi(\sigma)] \ni \Kr_\tau(\sigma),
\end{equation*}
where $[\sigma, \pi] = \{\tau \in \NC(n) \mid \sigma \le \tau \le \pi \}$, see \cite[Exercise 18.26 (2)]{NS}.
In particular, we have 
\begin{equation}\label{eq:poset_isom}
  \pi \in [\sigma, 1_n] \simeq  [0_n, \Kr(\sigma)] \ni \Kr_\pi(\sigma).
\end{equation}

Let $\mathbf{a}=(a_1,a_2,\dots,a_n)$ and $\mathbf{b}=(b_1,b_2,\dots,b_n)$. 
  By  \cite[Theorem 14.4]{NS}, we calculate  
  \begin{align}
      \text{the left-hand side of \eqref{eq:multiplication}} 
      &= \sum_{\pi\in\NC(n)} \abs{\Kr(\pi)} \sum_{\substack{\sigma\in\NC(n)\\ \sigma \le \pi}} \fc_{\sigma}[\mathbf{a}] \fc_{\Kr_\pi(\sigma)}[\mathbf{b}] \notag  \\
      &= \sum_{\sigma \in \NC(n)} \fc_\sigma[\mathbf{a}] \sum_{\substack{\pi\in\NC(n)\\ \sigma \le \pi}} \abs{\Kr(\pi)} \fc_{\Kr_\pi(\sigma)}[\mathbf{b}]. \label{eq:LHS}
  \end{align}
  For the right-hand side of \eqref{eq:multiplication}, thanks to the vanishing of mixed cumulants (see \cite[Theorem 11.16]{NS}), the nonzero contributions only come from the partitions $\Pi \in \NC(2n)$ that can be decomposed into a pair $(\pi_1, \pi_2) \in \NC(I_1) \times \NC(I_2)$ with $I_1 = \{1, 3, \dots, 2n-1 \}$ and $I_2 = \{ 2, 4, \dots, 2n\}$.  
  Thus,
  \begin{align}
      \text{the right-hand side of \eqref{eq:multiplication}} 
      &= \sum_{\Pi \in \NC(2n)} (2n+1 - \abs{\Pi}) \fc_\Pi[a_1,b_1,a_2,b_2,\dots,a_n,b_n] \notag \\
      &= \sum_{\substack{\pi_1, \pi_2 \in \NC(n)\\ \pi_2 \le \Kr(\pi_1)}} (2n+1 - \abs{\pi_1} - \abs{\pi_2})\fc_{\pi_1}[\mathbf{a}] \fc_{\pi_2}[\mathbf{b}]  \notag \\
      &= \sum_{\pi_1 \in \NC(n)} \fc_{\pi_1}[\mathbf{a}] \sum_{\substack{\pi_2 \in \NC(n) \\ \pi_2 \le \Kr(\pi_1)}} (\abs{\Kr(\pi_1)} + \abs{\Kr(\pi_2)} - 1) \fc_{\pi_2}[\mathbf{b}]  \label{eq:RHS1}
  \end{align}
  The poset isomorphism \eqref{eq:poset_isom} implies
  \begin{equation}
    \text{\eqref{eq:RHS1}} = \sum_{\sigma \in \NC(n)} \fc_{\sigma}[\mathbf{a}] \sum_{\substack{\pi \in \NC(n) \\ \sigma \le \pi}} (\abs{\Kr(\sigma)} + \abs{\Kr(\Kr_{\pi}(\sigma))} - 1) \fc_{\Kr_{\pi}(\sigma)}[\mathbf{b}]. \label{eq:RHS2}
  \end{equation}
  Formula \eqref{eq:cardinality} then implies
  \begin{equation*}
    \text{\eqref{eq:RHS2}} = \sum_{\sigma \in \NC(n)} \fc_{\sigma}[\mathbf{a}] \sum_{\substack{\pi \in \NC(n) \\ \sigma \le \pi}} \abs{\Kr(\pi)} \fc_{\Kr_{\pi}(\sigma)}[\mathbf{b}], 
  \end{equation*}
 which coincides with \eqref{eq:LHS} as desired. 
  \end{proof}

  Yet remarkably, the analogue of Propositions \ref{prop:additive} and \ref{prop:multiplicative} fails for (anti-)commutators.
  A counterexample is presented below, only for the anti-commutator because the commutator can be treated similarly.
  Let $a_1, a_2$ be free random variables and define  $x := \widetilde{a_1 a_2} + \widetilde{a_2a_1} = pa_1a_2p + pa_2a_1p$ and $X := \widetilde{a}_1 \widetilde{a}_2 + \widetilde{a}_2\widetilde{a}_1 =   pa_1pa_2p +pa_2pa_1p$.
  We shall prove the infinitesimal distributions of $x$ and $X$ do not coincide in general.
   Their second infinitesimal moments are given by 
  \begin{align*}
    \phi'(x^2) &=  \phi'(\widetilde{a_1 a_2}\widetilde{a_1 a_2}) + \phi'(\widetilde{a_1 a_2}\widetilde{a_2 a_1}) + \phi'(\widetilde{a_2 a_1}\widetilde{a_1 a_2}) + \phi'(\widetilde{a_2 a_1}\widetilde{a_2 a_1}), \label{eq:x^2} \\
    \phi'(X^2) &=  \phi'(\widetilde{a}_1\widetilde{a}_2\widetilde{a}_1\widetilde{a}_2) + \phi'(\widetilde{a}_1\widetilde{a}_2\widetilde{a}_2\widetilde{a}_1) + \phi'(\widetilde{a}_2\widetilde{a}_1\widetilde{a}_1\widetilde{a}_2) + \phi'(\widetilde{a}_2\widetilde{a}_1\widetilde{a}_2\widetilde{a}_1). 
  \end{align*}
 The first and fourth terms coincide between $\phi'(x^2)$ and $\phi'(X^2)$  by Equation \eqref{eq:moments_phi_prime}.
  By Theorem \ref{theorem:main}, the sum of second and third terms of $\phi'(x^2)$ is
  \[
    \begin{split}
      &\sum_{\pi \in \NC(2)} \abs{\Kr(\pi)} (\fc_{\pi}[a_1a_2, a_2a_1] + \fc_{\pi}[a_2a_1, a_1a_2])\\
      &= \fc_1[a_1a_2] \fc_1[a_2a_1] + \fc_1[a_2a_1] \fc_1[a_1a_2] + 2(\fc_2[a_1a_2, a_2a_1] + \fc_2[a_2a_1, a_1a_2])\\
      &=2\fc_1[a_1]^2\fc_1[a_2]^2 + 4 \fc_2[a_1, a_1]\fc_1[a_2]^2 + 4\fc_1[a_1]^2\fc_2[a_2, a_2] + 4\fc_2[a_1, a_1] \fc_2[a_2, a_2],
    \end{split}
  \]
  while that of $\phi'(X^2)$ is
  \[
    \begin{split}
      &\sum_{\Pi \in \NC(4)} \abs{\Kr(\Pi)} (\fc_{\Pi}[a_1, a_2, a_2, a_1] + \fc_{\Pi}[a_2, a_1, a_1, a_2])\\
      &= 2 \fc_1[a_1]^2 \fc_1[a_2]^2 + 4 \fc_2[a_1, a_1]\fc_1[a_2]^2 + 4\fc_1[a_1]^2\fc_2[a_2, a_2] + 6\fc_2[a_1, a_1] \fc_2[a_2, a_2].
    \end{split}
  \]
For example if $a_1$ and $a_2$ are the standard free semi-circular elements then $4 = \phi'(x^2) \neq \phi'(X^2) = 6$.

\subsection{Asymptotic infinitesimal freeness of principal minors}\label{subsec:model}

We work on a special case of the abstract setting in Subsection \ref{subsec:model} coming from random matrices.  
 Let $\B=\C\langle x_{i,j},q:i\in [k], j\in J_i\rangle$ be the unital polynomial algebra generated by noncommuting indeterminates $x_{i,j}~(i\in [k], j\in J_i), q$ with relation $q^2=q$, and let  
\[
\A=  \C\langle x_{i,j}: i\in[k],j \in J_i\rangle, \qquad \F= \A\langle q\rangle_0.  
\]
Note that $\B$ can be naturally identified with $\A \oplus \F$. 
Suppose that, for each $i\in[k]$, $(\X_{i,j}=\X_{i,j}^{(N)})_{j \in J_i}$ is a family of deterministic matrices in $\M_N(\C)$ which converge in distribution with respect to $\tr_N$, and $(U_i=U_i^{(N)})_{i\in[k]}$ is an independent family of Haar unitary random matrices.  
We consider the principal submatrices $(\tilde {\X}_{i,j}=\tilde{\X}_{i,j}^{(N)}:=P U_i\X_{i,j} U_i^*P)_{i\in[k],j\in J}$, where $P=P^{(N)}=\diag(1,1,\dots,1,0)$.  

We define the unital linear functional
\[
\phi\colon\B\to \C, \qquad \phi(R):= \lim_{N\to\infty} \E[\tr_N(R(U_i\X_{i,j}U_i^*,P:i\in[k],j\in J_i))]. 
\]
Note that $(\B,\phi)$ is a conventional setting of type A and $\tr_N(P^n) \to 1$ for any $n\in\N$, so that one cannot observe any difference between $(U_i\X_{i,j}U_i^*)$ and their submatrices $(\tilde{\X}_{i,j})$. 
In the framework of type B${}^\prime$, however, one can capture a difference between them. 
The matrix $Q=Q^{(N)}:=  I_{\M_N(\C)} - P^{(N)}$ obviously satisfies 
\[
\lim_{N\to\infty}\Tr_N(Q^n) =1 
\]
for any $n\in\N$ and thus by Theorem \ref{thm:asymp_wB'} one can define a linear functional $\Phi\colon \F\to\C$ by
\[
\Phi(R) := \lim_{N\to\infty}\E[\Tr_N(R(U_i \X_{i,j} U_i^*, Q:i\in[k], j\in J_i))], 
\]
where $R=R(x_{i,j}, q: i\in[k], j\in J_i)$ is a polynomial in which every monomial contains at least one $q$. 
Thus we obtain the ncps of type B${}^\prime$ $(\A,\Restr{\phi}{\A},\F,\Phi)$ and the associated infinitesimal ncps $(\B,\phi,\phi')$. In the space $\B$ we can define $p := 1_\B -q \in \B$.  

By Theorem  \ref{thm:asymp_wB'}, the family  $(\{x_{i,j}\}_{j\in J_i},\{q\})_{i\in [k]}$, which is the limit of $(\{U_i\X_{i,j} U_i^*\}_{j \in J_i}, \{Q\})_{i \in [k]}$, is weakly B${}^\prime$-free in $(\B,\phi,\phi')$. Recall that this fact can be reformulated as Corollary \ref{cor:asymp_wB'}, which leads to an asymptotic infinitesimal freeness of principal minors.

\begin{theorem}\label{thm:asymp_IF} Let $\B_N:=\M_N (\C) \oplus \M_N(\C)$ equipped with type B{}$^\prime$ multiplication \eqref{eq:multiplication_b'}.  We interpret the projection $P  = I - Q$ as $P = I \oplus (-Q) \in \B_N$. With this interpretation, we consider the subalgebra $P \B_N P\subseteq \B_N$ having internal unit $P$. 
Let $\A_i^0$ be the random sublagebra of $\B_N$ generated by $\{U_iX_{i,j}U_i^*\oplus 0: j\in J_i\}$. 
Then the random subalgebras 
$ 
\{P \A_i^0   P\}_{i\in [k]}
$ 
are almost surely asymptotically infinitesimally free in the infinitesimal ncps $(P\B_N P,\tr_N\oplus\, 0, \tr_N\oplus\Tr_N)$.   

\end{theorem}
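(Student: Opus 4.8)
The plan is to reduce the statement to the abstract result Proposition \ref{prop:free_ifree} by means of the almost sure asymptotic convergence recorded in Corollary \ref{cor:asymp_wB'}. First I would set up the identification of $\B_N = \M_N(\C)\oplus\M_N(\C)$ (with type B${}^\prime$ multiplication \eqref{eq:multiplication_b'}) with the algebra $\B = \A\oplus\F$ introduced above: the generator $U_i\X_{i,j}U_i^*\oplus 0$ of $\A_i^0$ plays the role of $x_{i,j}$, the perturbation $0\oplus Q$ plays the role of $q$, hence $P = I\oplus(-Q)$ plays the role of $p = 1_\B-q$, the principal minor $\tilde{\X}_{i,j}=P(U_i\X_{i,j}U_i^*\oplus 0)P$ plays the role of $\tilde x_{i,j}=px_{i,j}p$, and the corner $P\A_i^0 P\subseteq P\B_N P$ plays the role of $p\A_i p\subseteq p\B p$, where $\A_i:=\langle x_{i,j}:j\in J_i\rangle$. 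Under this identification the pair of functionals on $P\B_N P$ is $(\tr_N\oplus 0,\ \tr_N\oplus\Tr_N)$, with $\tr_N\oplus\Tr_N = (\tr_N\oplus 0)+(0\oplus\Tr_N)$; by Corollary \ref{cor:asymp_wB'} one has $(\tr_N\oplus 0)(R(U_i\X_{i,j}U_i^*\oplus 0,\,0\oplus Q))\to\phi(R)$ and $(0\oplus\Tr_N)(R(\dots))\to\phi'(R)$ almost surely for every polynomial $R\in\B$, while $(\tr_N\oplus 0)(P)=1$ and $(\tr_N\oplus\Tr_N)(P)=0$. Hence these two functionals converge, in the distributional sense, to $\phi$ and $\phi+\phi'$ restricted to the corner $p\B p$, which are exactly the functionals $(\psi,\psi')$ of the compression analysis of F\'evrier and Nica. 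Consequently, almost surely, the infinitesimal distribution with respect to $(\tr_N\oplus 0,\tr_N\oplus\Tr_N)$ of any tuple of elements of $\bigcup_{i\in[k]}P\A_i^0 P$ converges to the infinitesimal distribution with respect to $(\psi,\psi')$ of the corresponding tuple of elements of $\bigcup_{i\in[k]}p\A_i p$.

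Next I would check that the standing assumptions on $q$ are in force, so that Proposition \ref{prop:free_ifree} applies: $q^2=q$ by construction of $\B$; $\Phi(q) = \lim_{N\to\infty}\Tr_N(Q) = 1$; and $(\A,\{q\})$ is cyclic-antimonotone independent. The last point---together with the freeness of $(\A_i)_{i\in[k]}$ in $(\A,\phi)$---is supplied by the weak B${}^\prime$-freeness of the family $(\{x_{i,j}\}_{j\in J_i},\{q\})_{i\in[k]}$, which is guaranteed by Theorem \ref{thm:asymp_wB'} (recall $\A=\langle\A_i:i\in[k]\rangle$).

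Finally, since $(\A_i)_{i\in[k]}$ are free in $(\A,\phi)$, Proposition \ref{prop:free_ifree} gives that $(p\A_i p)_{i\in[k]}$ are infinitesimally free in $(p\B p,\psi,\psi')$. Combining this with the almost sure convergence of infinitesimal distributions established above---and the standard remark that centering a word at finite $N$ with respect to $\tr_N\oplus 0$ differs from centering the limiting word by a scalar tending to $0$ almost surely, so that the alternating-centered-word conditions of Definition \ref{def:infinitesimal_free} pass to the limit---one concludes that $\{P\A_i^0 P\}_{i\in[k]}$ are almost surely asymptotically infinitesimally free in $(P\B_N P,\tr_N\oplus 0,\tr_N\oplus\Tr_N)$. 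I expect the main obstacle to be not any single computation but the bookkeeping in the first step: one has to keep track of the internal unit $P$ (rather than the ambient identity $I$), so that the limiting object is genuinely the compressed infinitesimal ncps $(p\B p,\psi,\psi')$ with $\psi=\phi$ and $\psi'=\phi+\phi'$---precisely the setting of the compression formula underlying Proposition \ref{prop:free_ifree}---and one has to match the two pairs of functionals correctly through the identification $\B_N\leftrightarrow\B$.
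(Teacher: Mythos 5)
Your proposal is correct and follows essentially the same route as the paper: the paper's proof likewise interprets the claim as the statement that the limiting subalgebras $p\langle x_{i,j}: j\in J_i\rangle p$ are infinitesimally free in $(p\B p,\psi,\psi')$ and then applies Proposition \ref{prop:free_ifree}, with the almost sure convergence and the weak B${}^\prime$-freeness of $(\{x_{i,j}\}_{j\in J_i},\{q\})_{i\in[k]}$ already supplied by Theorem \ref{thm:asymp_wB'} and Corollary \ref{cor:asymp_wB'}. Your additional bookkeeping (checking $\Phi(q)=1$, $\psi'(p)=0$, and the passage from finite-$N$ centering to the limit) is consistent with, and merely makes explicit, what the paper leaves implicit.
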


\begin{proof} 
The desired conclusion means that the subalgebras $p\langle x_{i,j} : j \in J_i\rangle p, i\in [k],$ are infinitesimally free in $(p\B p,\psi,\psi')$, where $\psi, \psi'$ are defined in \eqref{eq:psi}, \eqref{eq:psi'}, respectively. This follows from Proposition \ref{prop:free_ifree} applied to the subalgebras $\A_i:=\langle x_{i,j} : j \in J_i\rangle$. 
\end{proof}

As a final comment, a crucial idea in Theorem \ref{thm:asymp_IF} was to regard the principal minor $\tilde X$ of a matrix $X \in \M_N(\C)$ as the element
\begin{align*}
\tilde X &= P (X \oplus 0)P = X  \oplus [-Q X  - X Q + Q X Q] 
\end{align*}
in the larger space $\B_N$. In $\B_N$ we can define a nonnormalized trace on the whole algebra $\B_N$ as $0\oplus \Tr_N$ that converges in the limit. 




\appendix

\section{Combinatorial proof of Theorem \ref{theorem:main}} \label{sec:combin}

We provide yet another straightforward proof of Theorem \ref{theorem:main} in this appendix.
The proof depends only on the following combinatorial lemma.
\begin{lemma} \label{lemma:kappa'}
  For every $\sigma \in \NC(n)$, 
  \begin{equation*}
    \sum_{\pi \le \sigma} \kappa'_\pi[p,p,\dots,p] = -\abs{\sigma}.
  \end{equation*}
\end{lemma}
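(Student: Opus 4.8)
The plan is to pass to the $\G$-valued framework of Subsection~\ref{subsec:FreeCumulants} and read off the claim by comparing coefficients of $\epsilon$. First I would record the elementary facts about $p = 1_\B - q$. Since $q \in \F$ and $q^2 = q$, one has $p^2 = 1_\B - 2q + q^2 = 1_\B - q = p$, hence $p^m = p$ for every $m \ge 1$. Using the definition \eqref{eq:notation_phi} of $\phi$ and $\phi'$ together with $\phi(1_\B) = 1$, $\phi'(1_\B) = 0$ and $\Phi(q) = 1$, we get $\phi(p^m) = \phi(p) = 1$ and $\phi'(p^m) = \phi'(p) = -\Phi(q) = -1$ for all $m \ge 1$. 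Consequently $\tildephi(p^m) = 1 - \epsilon$ for every $m \ge 1$, a constant independent of $m$.

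Next I would fix $\sigma \in \NC(n)$ and evaluate $\tildephi_\sigma$ at $(p, p, \dots, p)$ in two ways. By the multiplicativity \eqref{eq:multiplicative} and the fact that all arguments equal $p$,
\[
\tildephi_\sigma[p, p, \dots, p] = \prod_{V \in \sigma} \tildephi(p^{\abs{V}}) = (1 - \epsilon)^{\abs{\sigma}} = 1 - \abs{\sigma}\,\epsilon,
\]
where the last equality uses $\epsilon^2 = 0$. On the other hand, the $\G$-valued moment--cumulant formula \eqref{moment-cumulant_tilde} and the decomposition \eqref{eq:tildekappa1} give
\[
\tildephi_\sigma[p, p, \dots, p] = \sum_{\pi \le \sigma} \tildekappa_\pi[p, p, \dots, p] = \sum_{\pi \le \sigma} \kappa_\pi[p, \dots, p] + \epsilon \sum_{\pi \le \sigma} \kappa'_\pi[p, \dots, p].
\]
Comparing the coefficients of $\epsilon^1$ in the two displays yields $\sum_{\pi \le \sigma} \kappa'_\pi[p, \dots, p] = -\abs{\sigma}$, which is exactly the assertion. (Comparing the coefficients of $\epsilon^0$ recovers the consistent identity $\sum_{\pi \le \sigma} \kappa_\pi[p, \dots, p] = 1$, reflecting that the distribution of $p$ with respect to $\phi$ is $\delta_1$.)

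There is no serious obstacle: the only point needing attention is the computation of the $\G$-valued moments of $p$, i.e.\ observing that $p$ is idempotent with $\phi(p) = 1$ and $\phi'(p) = -1$, so that $\tildephi(p^m)$ collapses to the constant $1 - \epsilon$; everything else is bookkeeping with the moment--cumulant formula.
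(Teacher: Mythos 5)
Your proof is correct, and it takes a genuinely different route from the paper's. The paper proves the identity by brute force on the cumulant side: it first observes that $\kappa_l[p,\dots,p]=0$ for $l\ge 2$ (since $p$ has distribution $\delta_1$ with respect to $\phi$), uses the Leibniz rule in \eqref{eq:multiplicativity_kappa} to conclude that only partitions with at most one non-singleton block contribute, computes $\kappa'_{0_n}[p,\dots,p]=-n$ and $\kappa'_{\pi_V}[p,\dots,p]=(-1)^{\abs{V}}$ explicitly, and sums via a binomial identity to get $-1$ for $\sigma=1_n$; a second step then factorizes over the blocks of a general $\sigma$. You instead recognize that $\sum_{\pi\le\sigma}\tildekappa_\pi[p,\dots,p]$ is, by the $\G$-valued moment--cumulant formula \eqref{moment-cumulant_tilde}, exactly the moment $\tildephi_\sigma[p,\dots,p]$, which the multiplicativity \eqref{eq:multiplicative} and the idempotency of $p$ reduce to $(1-\epsilon)^{\abs{\sigma}}=1-\abs{\sigma}\epsilon$; reading off the $\epsilon$-coefficient finishes the argument in one stroke, with no case analysis and no binomial sum. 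Your computation of the inputs is right: $\phi(p^m)=1$ and $\phi'(p^m)=-\Phi(q)=-1$ follow directly from \eqref{eq:notation_phi} and the normalization $\Phi(q)=1$. What the paper's longer route buys is the explicit list of individual values $\kappa'_\pi[p,\dots,p]$, but these are not used elsewhere, so your argument is a clean simplification; note also that, like the paper's, it uses nothing about the cyclic-antimonotone independence of $(\A,\{q\})$ --- only the $\G$-valued moments of $p$ itself.
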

\begin{proof}
\ 

\subproof{Step 1: $\sigma= 1_n$.} 
Concerning moments with respect to $\phi$ or cumulants $\kappa_\pi$, the perturbation part $q$ can be regarded as $0$ and so the element $p=1_\A -q$ is indistinguishable with $1_\A$. This implies that $\Restr{\kappa}{V}[p,p,\dots, p]$ vanishes if $V \subseteq [n]$ contains more than one element. Hence, by the Leibniz rule for $\kappa_\pi'$ (the second formula of \eqref{eq:multiplicativity_kappa}), only the noncrossing partitions $\pi$ having at most one non-singleton block contribute to the sum, i.e., the partitions $0_n$ and $\pi_V:=\{V, \{k\}\mid k \in [n]\setminus V\}$ for all $V \subseteq[n]$ with $|V|\ge2$.  

For each $l\in \N$, {the moment-cumulant formula \eqref{eq:multiplicativity_kappa} and the fact that $\phi$ vanishes on $\{q\}$} readily yield 
\[
\kappa_l'[p,p,\dots,p] = (-1)^l\kappa_l'[q,q,\dots,q] = (-1)^{l}\Phi(q) = (-1)^{l},  
\]
and hence, by the Leibniz rule, 
\begin{equation*}
    \kappa'_{0_n}[p,p,\dots,p] = -n \qquad \text{and} \qquad \kappa'_{\pi_V}[p,p,\dots,p] = (-1)^{\abs{V}}. 
  \end{equation*}
Therefore,
  \begin{equation*}
      \sum_{\pi \in \NC(n)} \kappa'_\pi[p,p,\dots,p]
      = - n + \sum_{V\subseteq [n], |V|\ge2} (-1)^{|V|} 
      = -n + \sum_{2 \le l \le n} \binom{n}{l} (-1)^l = -1.
  \end{equation*}
  
 \subproof{Step 2: general $\sigma$.}  
  Let $\sigma = \{ V_1, \dots, V_k \} \in \NC(n)$. Then $\{ \pi \in \NC(n) \mid \pi \le \sigma\}$ is naturally isomorphic to $\NC(V_1) \times \cdots \times \NC(V_k)$, so that (with $[p,p,\dots,p]$ omitted) 
  \begin{equation*}
    \begin{split}
      \sum_{\pi \le \sigma} \kappa'_\pi &= \sum_{\pi_1 \in \NC(V_1)} \dots \sum_{\pi_k \in \NC(V_k)} (\kappa_{\pi_1} \cdots \kappa_{\pi_k})'  \\
      &= \sum_{1 \le j \le k} \sum_{\pi_1 \in \NC(V_1)} \dots \sum_{\pi_k \in \NC(V_k)} \kappa'_{\pi_j} (\kappa_{\pi_1} \cdots \kappa_{\pi_{j-1}} \kappa_{\pi_{j+1}} \cdots \kappa_{\pi_k})\\
      &= \sum_{1 \le j \le k} \sum_{\pi_j \in \NC(V_j)} \kappa'_{\pi_j}\\ 
      &= -k; 
    \end{split}
  \end{equation*}
 from the second to the third line the fact that $\kappa_{\pi_j}=0$ for all $\pi_j \ne 0_{V_j}$ is used; the last computation follows from Step 1. 
\end{proof}

\begin{proof}[Proof of Theorem \ref{theorem:main}]
We can calculate as follows:
  \begin{align}
      \varphi'(\tilde{a}_1 \tilde{a}_2 \cdots \tilde{a}_n) &= \varphi'(p a_1 p a_2 p \cdots p a_n p) \notag\\
      &= \varphi'(1_\A p a_1 p a_2 p \cdots p a_n p) \notag\\
      &= \sum_{\pi \in \NC(2n+2)} \kappa'_\pi[1_\A, p, a_1, p,a_2,p, \dots, a_n, p]\notag\\
      &= \sum_{\pi \in \NC(n+1)} \kappa_\pi[1_\A, a_1, a_2,\dots, a_n] \sum_{\sigma \le \Kr(\pi)} \kappa'_\sigma[p, p, p,\dots, p] \notag\\
      &=  - \sum_{\pi \in \NC(n+1)} \abs{\Kr(\pi)} \kappa_\pi[1_\A, a_1, a_2,\dots, a_n].\label{eq:combinotorial}
  \end{align}
The fourth line is based on the fact that $\A$ is infinitesimally free from $p$ and the fifth line is based on Lemma \ref{lemma:kappa'}.
To the last summation only the subset $\{\pi \in \NC(n+1) \mid \{1\} \in \pi\}$ contributes, which is isomorphic to $\NC(n)$.
This isomorphism does not change the cardinality of the Kreweras complement, so that Theorem \ref{theorem:main} follows. 
\end{proof}

\section*{Acknowledgements}
This work was supported by JSPS Open Partnership Joint Research Projects grant no. JPJSBP120209921 and Bilateral Joint Research Projects (JSPS-MEAE-MESRI, grant no. JPJSBP120203202).
The first author was supported by the Hokkaido University Ambitious Doctoral Fellowship (Information Science and AI) and JSPS Research Fellowship for Young Scientists PD (KAKENHI Grant Number 24KJ1318).
The second author is supported by JSPS Grant-in-Aid for Transformative Research Areas (B) grant no.~23H03800JSPS, JSPS Grant-in-Aid for Young Scientists 19K14546 and JSPS Scientific Research 18H01115.
The authors are grateful to Octavio Arizmendi for discussions about the preprint \cite{ACG} and for his hospitality during the authors' stays in CIMAT where part of this work was done; to Dan Voiculescu for asking the question about the (anti-)commutator version of Propositions \ref{prop:additive} and \ref{prop:multiplicative}; to Pei-Lun Tseng for discussions about Theorem \ref{theorem:main} in the nontracial setting. Finally, the authors express sincere gratitude to an anonymous referee for helpful comments that considerably shortened some proofs and improved the readability of the paper.



\vspace{6mm}
\begin{enumerate}
  \item[]
    \hspace{-10mm} Katsunori Fujie\\
    Department of Mathematics, Hokkaido University.\\
    North 10 West 8, Kita-Ku, Sapporo 060-0810, Japan.\\
   URL: \url{https://sites.google.com/view/katsunorifujie} \\
     Current address: Department of Mathematics, Kyoto University.\\
  Kitashirakawa, Oiwake-cho, Sakyo-ku, Kyoto, 606-8502, Japan.\\
 email: fujie.katsunori.42m@st.kyoto-u.ac.jp\\
   
  \item[]
    \hspace{-10mm} Takahiro Hasebe\\
    \hspace{-10mm} Department of Mathematics, Hokkaido University.\\
    \hspace{-10mm} North 10 West 8, Kita-Ku, Sapporo 060-0810, Japan. \\
    \hspace{-10mm} email: thasebe@math.sci.hokudai.ac.jp\\
    \hspace{-10mm} URL: \url{https://www.math.sci.hokudai.ac.jp/~thasebe/}
\end{enumerate}


\begin{thebibliography}{99}

\bibitem{ACG} O.\ Arizmendi, G.\ C\'ebron and N.\ Gilliers, Combinatorics of cyclic-conditional freeness, preprint. 


\bibitem{AC} O.\ Arizmendi and A.\ Celestino, Polynomial with cyclic monotone elements with applications to random matrices with discrete spectrum. Random Matrices: Theory and Appl.\ 10  (2021), No.\ 02, 2150020. 

\bibitem{AGVP} O.\ Arizmendi, J.\ Garza-Vargas and D.\ Perales, Finite free cumulants: Multiplicative convolutions, genus expansion and infinitesimal distributions, Trans.\ Amer.\ Math.\ Soc.\ 376 (2023), 4383--4420. 


\bibitem{AHL} O.\ Arizmendi, T.\ Hasebe and F.\ Lehner, Cyclic independence: Boolean and monotone, Algebr.\ Combin.\ 6 (2023), no.\ 6, 1697--1734.

\bibitem{BBCF} S.\ T.\ Belinschi, H.\ Bercovici, M.\ Capitaine, and M.\ F\'evrier, Outliers in the spectrum of large deformed unitarily invariant models. Ann.\ Probab.\ 45 (2017), 3571--3625.


\bibitem{BBP} J.\ Baik, G.\ Ben Arous and S.\ P\'{e}ch\'{e}, Phase transition of the largest eigenvalue for nonnull complex sample covariance matrices, Ann.\ Probab.\ 33 (2005), 1643--1697.

\bibitem{BBC} S.\ T.\ Belinschi, H.\ Bercovici, M.\ Capitaine,
On the outlying eigenvalues of a polynomial in large independent random matrices, Int.\ Math.\ Res.\ Not.\ 4 (2021), 2588--2641.
 

\bibitem{Bel} S.\ T.\ Belinschi, C-free convolution for measures with unbounded support, in: Von Neumann algebras in Sibiu, 1--7, Theta Ser.\ Adv.\ Math.\ 10, Theta, Bucharest, 2008. 



\bibitem{BS} S.\ T.\ Belinschi and D.\ Shlyakhtenko,
Free probability of type B: analytic interpretation and applications,
Amer.\ J.\ Math.\ 134 (2012), no.\ 1, 193--234.



\bibitem{BGS} A.\ Ben Ghorbal and M.\ Sch\"urmann, Non-commutative notions of stochastic independence, Math.\ Proc.\ Camb.\ Phil.\ Soc.\ 133 (2002), 531--561.



\bibitem{BN} F.\ Benaych-Georges and R.\ R.\ Nadakuditi, The eigenvalues and eigenvectors of finite, low rank perturbations of large random matrices. Adv.\ Math.\ 227 (2011), 494--521. 



\bibitem{BV93} H.~Bercovici and D.~Voiculescu, Free convolution of measures with unbounded support, Indiana Univ.\ Math.\ J.\ 42, no.\ 3 (1993), 733--773.



\bibitem{Bia97} P.\ Biane, Some properties of crossings and partitions, Discrete Math.\ 175 (1997), 41--53. 



\bibitem{BGN} P.\ Biane, F.\ Goodman, and A.\ Nica,
Non-crossing cumulants of type B,
Trans.\ Amer.\ Math.\ Soc.\ 355 (2003), no.\ 6, 2263--2303.



\bibitem{BLS} M.\ Bo\.{z}ejko, M.\ Leinert and R.\ Speicher,
Convolution and limit theorems for conditionally free random variables,
Pacific J.\ Math.\ 175 (1996), no.\ 2, 357--388.



\bibitem{BS1} M.~Bo\.{z}ejko and R.~Speicher, $\psi$-independent and symmetrized white noises, in: QP-PQ Volume VI, Quantum Probability and Related Topics (L.~Accardi, ed.), World Scientific, Singapore, 1991, 219--236.   


\bibitem{CDG} G.\ C\'{e}bron, A.\ Dahlqvist, and F.\ Gabriel,
Freeness of type B and conditional freeness for random matrices, Indiana Univ.\ Math.\  J.\ 73, No.\ 3 (2024), 1207--1252. 


\bibitem{CG} G.\ C\'{e}bron and N.\ Gilliers, 
Asymptotic cyclic-conditional freeness of random matrices. 	arXiv:2207.06249



\bibitem{CFHLS} B.\ Collins, K.\ Fujie, T.\ Hasebe, F.\ Leid and N.\ Sakuma,
Fluctuations of eigenvalues of a polynomial on Haar unitary and finite rank matrices, arXiv:2309.15396.



\bibitem{CHS} B.\ Collins, T.\ Hasebe, and N.\ Sakuma,
Free probability for purely discrete eigenvalues of random matrices, J.\ Math.\ Soc.\ Japan 70 (2018), no.\ 3, 1111--1150.



\bibitem{CLS} B.\ Collins, F.\ Leid and N.\ Sakuma, Matrix models for cyclic monotone and monotone independences, Electron.\ Commun.\ Probab.\ 29 (2024), 1--14. 



\bibitem{C03} B.\ Collins, Moments and cumulants of polynomial random variables on unitary groups, the Itzykson-Zuber integral, and free probability, Int.\ Math.\ Res.\ Not.\ 17 (2003), 953--982.


\bibitem{CS06} B.\ Collins and P.\ \'Sniady, Integration with respect to the Haar measure on unitary, orthogonal and symplectic group, Comm.\ Math.\ Phys.\ 264 (2006), 773--795.

\bibitem{FN} M.\ F\'{e}vrier and A. Nica,
Infinitesimal non-crossing cumulants and free probability of type B,
J.\ Funct.\ Anal.\ 258 (2010), no.\ 9, 2983--3023.

\bibitem{FMNS} M.\ F\'evrier, M.\ Mastnak, A.\ Nica and K.\ Szpojankowski, A construction which relates c-freeness to infinitesimal freeness, 
Adv.\ Appl.\ Math.\ 110 (2019), 299--341.

\bibitem{Uwe05} U.\ Franz, Multiplicative monotone convolutions, Banach Center Publ.\ 73 (2006), 153--166.


\bibitem{FH} K.\ Fujie and T.\ Hasebe,
The spectra of principal submatrices in rotationally invariant Hermitian random matrices and the Markov--Krein correspondence,
ALEA Lat.\ Am.\ J.\ Probab.\ Math.\ Stat.\ 19 (2022), no.\ 1, 109--123.




\bibitem{Has} T.\ Hasebe, A three-state independence in non-commutative probability. arXiv:1009.1505.


\bibitem{K} S.\ Kerov, Interlacing measures, in: Kirillov's seminar on representation theory, 35--83, Amer.\ Math.\ Soc.\ Transl.\ Ser.\ 2, Volume 181 (G.\ I.\ Olshanski ed.), Amer.\ Math.\ Soc., Providence, RI, 1998.


\bibitem{MP} P.\ Mergny and M.\ Potters, 
Rank one HCIZ at high temperature: Interpolating between classical and free convolutions, SciPost Phys.\ 12, 022 (2022). 



\bibitem{Mur00} N.\ Muraki, Monotonic convolution and monotonic L\'{e}vy-Hin\v{c}in formula, preprint, 2000.  

\bibitem{NS} A.\ Nica and R.\ Speicher,
Lectures on the combinatorics of free probability,
London Mathematical Society Lecture Note Series, 335. Cambridge University Press, Cambridge, 2006.


\bibitem{R} V.\ Reiner, Non-crossing partitions for classical reflection groups, Discrete Math.\ 177
(1997), 195--222.

\bibitem{S} D.\ Shlyakhtenko,
Free probability of type-B and asymptotics of finite-rank perturbations of random matrices,
Indiana Univ.\ Math.\ J.\ 67 (2018), no.\ 2, 971--991.


\bibitem{Spe94} R.~Speicher, Multiplicative functions on the lattice of non-crossing partitions and free convolution, Math.~Ann.~298 (1994), 611--628.

\bibitem{SW97} R.\ Speicher and R.\ Woroudi, Boolean convolution, in: Free Probability Theory, ed.\ D.\ Voiculescu, Fields Inst.\ Commun.\ 12, Amer.\ Math.\ Soc.\ (1997), 267--280. 



\bibitem{V85} D.\ V.\ Voiculescu, 
Symmetries of some reduced free product {$C^\ast$}-algebras, in: Operator algebras and their connections with topology and ergodic theory ({B}u\c{s}teni, 1983), Lecture Notes in Math.\ 1132, 556--588, Springer, Berlin, 1985. 


\bibitem{Voi91}
D.\ Voiculescu, 
Limit laws for random matrices and free products, 
Invent.\ Math.\ 104 (1991), no.\ 1, 201--220. 


\end{thebibliography}
\end{document}